\lstdefinestyle{Python}{
    language        =   Python, 
    basicstyle      =   \zihao{-5}\ttfamily,
    numberstyle     =   \zihao{-5}\ttfamily,
    keywordstyle    =   \color{blue},
    keywordstyle    =   [2] \color{teal},
    stringstyle     =   \color{magenta},
    commentstyle    =   \color{red}\ttfamily,
    breaklines      =   true,   
    columns         =   fixed,  
    basewidth       =   0.5em,
}
\numberwithin{equation}{section}
\newcommand{\E}{\mathbb{E}}
\newcommand{\bbR}{\mathbb{R}}
\newtheorem{theorem}{Theorem}[section]
\newtheorem{lemma}{Lemma}[section]
\newtheorem{assumption}{Assumption}[section]
\newtheorem{proposition}{Proposition}[section]
\newtheorem{remark}{Remark}[section]
\newtheorem{corollary}{Corollary}[section]
\begin{document}
\title{Geometric ergodicity of SGLD via reflection coupling}
\author[a,d]{Lei Li\thanks{E-mail:leili2010@sjtu.edu.cn}}
\author[b]{Jian-Guo Liu\thanks{E-mail:jliu@math.duke.edu}}
\author[c]{Yuliang Wang\thanks{E-mail:YuliangWang$\_$math@sjtu.edu.cn}}
\affil[a]{School of Mathematical Sciences, Institute of Natural Sciences, MOE-LSC, Qing Yuan Research Institute, Shanghai Jiao Tong University, Shanghai, 200240, P.R.China.}
\affil[b]{Department of Mathematics, Department of Physics, Duke University, Durham, NC 27708, USA.}
\affil[c]{School of Mathematical Sciences, Shanghai Jiao Tong University, Shanghai, 200240, P.R.China.}
\affil[d]{Shanghai Artificial Intelligence Laboratory}
\date{}
\maketitle
\begin{abstract}
    We consider the geometric ergodicity of the Stochastic Gradient Langevin Dynamics (SGLD) algorithm under nonconvexity settings. Via the technique of reflection coupling, we prove the Wasserstein contraction of SGLD when the target distribution is log-concave only outside some compact set. The time discretization and the minibatch in SGLD introduce several difficulties when applying the reflection coupling, which are addressed by a series of careful estimates of conditional expectations.  As a direct corollary, the SGLD with constant step size has an invariant distribution and we are able to obtain its geometric ergodicity in terms of $W_1$ distance. The generalization to non-gradient drifts is also included.
\end{abstract}

\section{Introduction}
The Stochastic Gradient Langevin Dynamics (SGLD), first introduced by Welling and Teh \cite{welling2011bayesian}, has attracted a lot of attention in various areas \cite{mou2018generalization,zou2021faster,brosse2018promises}. The SGLD algorithm and its variants have shown exceptional performance when dealing with many practical sampling or optimization tasks. As an online algorithm, SGLD can be viewed as adding independent white noise to the well-known the classical machine learning algorithm, Stochastic Gradient Descent (SGD), making it useful for sampling tasks. The goal here is to generate samples from a target distribution $\pi$. In fact, SGLD is a Markov process that approximates the overdamped Langevin diffusion whose invariant measure is the target distribution $\pi$ in the sampling task. Here the approximation is realized by using random batch to compute the drift at discrete time $T_k := k\eta$, and $\eta$ is the constant time step (or learning rate). In this paper, our primary focus is on the theoretical study of SGLD's convergence to the invariant measure and its convergence rate.

Let us first explain the SGLD method. Suppose that the distribution of interest is $\pi\propto \exp(-\beta U)$ where $U: \bbR^d\to \bbR$ is the free energy and  $\beta>0$ is a positive constant describing the inverse temperature of the system. One effective way to sample from the target $\pi$ is through the following overdamped Langevin diffusion, whose invariant measure is exactly $\pi$:
\begin{equation*}
    dX = -\nabla U(X)dt + \sqrt{2\beta^{-1}} dW, \quad X|_{t=0}=X_0,
\end{equation*}
where $W$ is the Brownian motion in $\mathbb{R}^d$. To numerically compute the sampling procedure, one often uses the Euler-Maruyama scheme. Given the time step (or learning rate) $\eta_k$ at $k$-th iteration, and denote $T_k := \sum_{i=0}^{k-1} \eta_i$, the Euler-Maruyama scheme iterates as follows:
\begin{equation*}
    \hat{X}_{T_{k+1}} = \hat{X}_{T_k} - \eta_k \nabla U(\hat{X}_{T_k}) + \sqrt{2\beta^{-1}}(W_{T_{k+1}} - W_{T_k}).
\end{equation*}

The key idea of SGLD is to reduce the computation cost by using the random batch. In fact, in various sampling and optimization tasks from machine learning and data science, people deal with the potential $U(\cdot)$ coming from high dimensional large-scaled data with size $N$. Often $U(\cdot)$ is of the form $U(\cdot)=\mathbb{E}_{\xi}\left[U^{\xi}(\cdot)\right]$, which is the expected value of a function depending on a random variable $\xi  \in \mathcal{S}$. However, usually we do not have any knowledge of the data's distribution, and the only realistic approach to computing $U(\cdot)$ is through the random batch of a fixed small size $S\ll N$ repeatedly and independently used at each $T_k$ (see \eqref{eq:sglddiscrete} for the details). When $k$ goes large such that $kS \approx N \gg 1$, the random batch approximation for $U(\cdot)=\mathbb{E}_{\xi}\left[U^{\xi}(\cdot)\right]$ is then realized accumulatively due to the law of large numbers, and meanwhile the computational cost at each step is significantly reduced 
 since $S\ll N$.
In practice, one often has $U(x)=U_0(x)+\frac{1}{N}\sum_{i=1}^N \ell_i(x)$ and, as in the stochastic gradient descent algorithm \cite{robbins1951stochastic,feng2018semi}, $\xi$ often represents the minibatch of $\{1,\cdots, N\}$ (In this case, for fixed batch-size $S$ (a determined constant), $\xi$ belongs to the set $\mathcal{S}=\{(a_1,\dots,a_S): a_i (1\leq i \leq S)$ are $S$ different random numbers uniformly chosen from $\{1,\dots, N \}$ $\}$. For $\xi = (a_1,\dots,a_S)$, the corresponding unbiased estimate $U^{\xi}$ is $U^{\xi}(x)=U_0(x)+\frac{1}{S}\sum_{i=1}^S \ell_{a_i}(x)$.)
The general form of SGLD iteration can be written in the following form.
\begin{equation}\label{eq:sglddiscrete}
    \bar{X}_{T_{k+1}} = \bar{X}_{T_k} - \eta_k \nabla U^{\xi_k}(\bar{X}_{T_k}) + \sqrt{2\beta^{-1}}(W_{T_{k+1}} - W_{T_k}).
\end{equation}
Here, $U^{\xi_k}$ is an unbiased estimate for $U$, and thus $\nabla U^{\xi_k}$ is also an unbiased estimate for $\nabla U$. As mentioned above, $\xi_k$ often represents the random mini-batch of some fixed batch size $S$ at time $t_k$, and $\{\xi_k\}_{k=0}^{\infty}$ are i.i.d.. Also,  In our analysis, we also consider the following continuous version, which is a continuous-time Markov process with continuous path:
\begin{equation}\label{eq:sgld}
    \bar{X}_t = \bar{X}_{T_k} - \int_{T_k}^t \nabla U^{\xi_k} (\bar{X}_{T_k}) ds + \int_{T_k}^t\sqrt{2\beta^{-1}} dW_s,  \quad t \in [T_k, T_{k+1}),\quad k = 0,1,\dots,
\end{equation}
and the corresponding differential form
\begin{equation}\label{eq:sglddiff}
    d\bar{X}_t = -\nabla U^{\xi_k} (\bar{X}_{T_k}) dt + \sqrt{2\beta^{-1}} dW,  \quad  \bar{X}_t|_{t = T_k} = \bar{X}_{T_k},\quad t \in [T_k, T_{k+1})\quad k = 0,1,\dots.
\end{equation}
Note that the value of \eqref{eq:sgld} at time grid $T_k$ is exactly that of \eqref{eq:sglddiscrete}, so it is enough to study the continuous version to obtain estimate for SGLD at $t = T_k$.

Recent decades have witnessed great development of theoretical research for sampling error bound of SGLD \cite{farghly2021time,mou2018generalization,zou2021faster,chau2021stochastic,zhang2023nonasymptotic,li2022sharp}. With SGLD considered a numerical scheme for the overdamped Langevin diffusion, one is naturally motivated to study the algorithm's approximation accuracy. Specifically, when comparing the densities $\bar{\rho}_t$, $\rho_t$ of time marginal distributions of SGLD and overdamped Langevin diffusion, respectively, the authors of \cite{li2022sharp} proved that $H(\bar{\rho}_t \| \rho_t) \leq C\eta^2$, where $H(\cdot \| \cdot)$ is the relative entropy (or KL-divergence) and recall that $\eta$ is the constant learning rate. Consequently, using ergodicity of the overdamped Langevin diffusion which can be derived provided that its invariant measure $\pi$ satisfies the log-Sobolev inequality, one can estimate the Wasserstein or total variation distance between $\bar{\rho}_t$ and the target $\pi$: $W_p (\bar{\rho}_t,\pi)$, $TV(\bar{\rho}_t, \pi)$ $\leq Ce^{-Ct} + C\eta^{\alpha}$ for some rate $\alpha \leq 1$ and $p = 1,2$. Notably, recently the authors of \cite{li2022sharp} obtained the optimal rate $\alpha = 1$ while in some other literature like \cite{farghly2021time,mou2018generalization,zou2021faster,chau2021stochastic,zhang2023nonasymptotic}  $\alpha$ is no larger than $\frac{1}{2}$. Moreover, under the global strongly-log-concaveness assumption for the target $\pi$, using the synchronous coupling method, it can be proved that the SGLD algorithm itself as a Markov chain has an invariant measure $\tilde{\pi}$, and $\bar{\rho}_t$ converges to $\tilde{\pi}$ exponentially in time in terms of Wasserstein-2 distance \cite{brosse2018promises}. 
However, the stringent requirement of global strong-log-concaveness potentially restricts the broader applicability of these results. For instance, this result would not give a good theoretical guarantee of convergence when one is sampling from Gaussian mixture distributions. The question of the existence and uniqueness of  $\tilde{\pi}$, as well as the algorithm's ergodicity when one only assumes strong-log-concaveness of the target distribution $\pi$ outside some compact set, remains an open area for future research. The primary objective of this paper is to resolve such problem; specifically, we aim to study the geometric ergodicity of SGLD, assuming strong-log-concaveness of the target distribution $\pi$ outside some compact set and some other regular Lipschitz conditions (see Section \ref{sec:assumption} for more details).

Now in order to study the geometric ergodicity of SGLD, we use the classical coupling method \cite{dubrushin79}, and in particular we apply the method of reflection coupling \cite{lindvall1986coupling,eberle2011reflection,eberle2016reflection}, which was originally designed to study the contraction property of many continuous SDEs. 
Here, we give a brief summary of how the reflection coupling method is adopted to study the geometric ergodicity of SGLD. Consider the two time marginal distributions $\mu_t$, $\nu_t$ of SGLD \eqref{eq:sgld}, starting from the initial distributions $\mu_0$, $\nu_0$, respectively. We aim to prove the contraction property under the Wassertein-1 distance: $W_1(\mu_t,\nu_t) \lesssim e^{-ct}W_1(\mu_0,\nu_0)$. The coupling method then reduces this goal to find some paired dynamics $(\bar{X}_t, \bar{Y}_t)$ satisfying the laws of $\bar{X}_t$, $\bar{Y}_t$ are $\mu_t$, $\nu_t$, respectively, and the Lyapunov exponent
$$
\gamma := \limsup_{t \rightarrow \infty} \frac{1}{t}\log \mathbb{E}|\bar{X}_t - \bar{Y}_t| \leq -c
$$
is negative for this paired dynamics ($\bar{X}_t$, $\bar{Y}_t$). Note that the geometric ergodicity arises from strong convexity of the potential $U(\cdot)$ outside some compact set. This strong convexity becomes strong monotonicity property for any two points $(x,y)$ far away, as in Lemma \ref{lmm:ass} below. Therefore, any such pair ($\bar{X}_t$, $\bar{Y}_t$) would attract each other if they are sufficiently far away.

Next, in order to construct such paired dynamics ($\bar{X}_t$, $\bar{Y}_t$), we use the key technique - reflection coupling equipped with a specific Lyapunov function $f(\cdot)$. This technique was originally designed by Lindvall and Rogers in 1986 and was developed by Eberle etc. to study the geometric ergodicity of many continuous dynamics. Here, the Lyapunov function $f(\cdot)$ defined in \eqref{eq:f_sgld} in our result is an increasing, concave function. Correspondingly, we consider the Kantorovich-Rubinstein distance $W_f(\cdot,\cdot)$ with cost function $f(\cdot)$ defined in \eqref{eq:Wf} below.
The reflection coupling methods begins with choosing the pair of initial points $(\bar{X}_0, \bar{Y}_0)$ such that $\mathbb{E}f(|\bar{X}_0-\bar{Y}_0|) = W_f(\mu_0,\nu_0)$.
Then we choose a realization $\bar{X}_t$ of SGLD \eqref{eq:sgld} such that the law of $X_t$ is $\mu_t$ and the law of $X_0$ is $\mu_0$. The key step in the reflection coupling method is that we construct a companion process $\bar{Y}_t$ with $\bar{Y}_0$ coupled above with $\bar{X}_0$ and satisfies: (i) $\bar{Y}_t$ shares the same random batch and Brownian motion with $\bar{X}_t$, and has an additional reflection term in its diffusion part (see \eqref{eq:reflectioncoupling} below); (ii) $\bar{Y}_t$ is also a realization of SGLD \eqref{eq:sgld} and the law of $\bar{Y}_t$ is $\nu_t$ (see Lemma \ref{lmm:verifyBM}). Then the contraction property mentioned above is reduced to estimation of the negative Lyapunov exponent for the paired dynamics ($\bar{X}_t$, $\bar{Y}_t$).
In fact, with this specially designed diffusion in the paired dynamics $(\bar{X}_t, \bar{Y}_t)$, we can actually prove the exponential decay in time of $\mathbb{E}f(\bar{X}_t-\bar{Y}_t)$ and therefore obtain the $W_f$-contraction (see Theorem \ref{thm:contraction} below): $$W_f(\mu_{t},\nu_{t}) \leq \mathbb{E}f(|\bar{X}_{t} - \bar{Y}_{t}|) \leq Ce^{-Ct} \mathbb{E}f(|\bar{X}_{0} - \bar{Y}_{0}|) = Ce^{-CT_k}W_f(\mu_0,\nu_0).$$ 
Notably, the key contribution of the reflection coupling is as follows: different from synchronous coupling method where $\bar{X}_t$, $\bar{Y}_t$ shares exactly the same Brownian motion, in the reflection coupling, the process $\bar{X}_{t} - \bar{Y}_{t}$ is still a diffusion process. In particular its diffusion is an anisotropic one, see the expression in \eqref{eq:Ztevolve}. Consequently, the existence of this diffusion leads to a $f''(\cdot)$ term after It\^o's calculus, see \eqref{eq:dEdt}. Then the contraction property can be obtained based on the following concave property of the constructed Lyapunov function in \eqref{eq:f_sgld}: 
\begin{equation*}
    f''(r) \lesssim -r,
\end{equation*}
for all $r$ in a bounded set.
After proving the contraction property, one can directly obtain the geometric ergodicity of SGLD (see Corollary \ref{coro:ergodicity} below) using the Banach's contraction mapping theorem. Moreover, our choice of the $f(\cdot)$ makes the two distances $W_f(\cdot,\cdot)$, $W_1(\cdot,\cdot)$ equivalent, enabling one to obtain the geometric ergodicity under the Wasserstein-1 distance. Further details regarding the formulation of such paired dynamics $(\bar{X}_t,\bar{Y}_t)$ and the construction of the Lyapunov function $f(\cdot)$ will be elaborated upon in Section \ref{section:omiited}.

These years, the reflection coupling has been instrumental in establishing the geometric ergodicity of various random dynamic systems including overdamped/underdamped Langevin diffusion \cite{eberle2019couplings,eberle2016reflection,majka2020nonasymptotic}, Hamiltonian Monte Carlo \cite{bou2020coupling,bou2021two}, first-order interacting particle systems \cite{eberle2019quantitative,durmus2020elementary}, etc.   Recently, in \cite{majka2020nonasymptotic}, the authors constructed a reflection coupling for the discrete Euler-Maruyama scheme directly and obtained the contraction and ergodicity in Wasserstein-1 and Wasserstein-2 distances without random batch. Moreover, their method also gives some estimate for the long-time behavior of SGLD, but there is an $O(\eta)$ remainder in the control coming from the variance of the random batch (see \cite{majka2020nonasymptotic}, Theorem 2.16), so intuitively the ergodicity of SGLD could not be proved directly through this estimate.  In \cite{jin2022ergodicity}, the authors studied the ergodicity of the time-continuous random batch dynamics for the interacting particle systems, using a variant of the reflection coupling. The model studied resembles SGLD but we remark that the proof there makes use of the external confining potential and regards the random batch version of the interaction as perturbation. In our setting below, we only assume the confining property of {\it the expected drift} with no external potential to help, and we will consider the freezing drift dynamics instead and show that the distance between laws of two SGLD copies will vanish to zero in Wasserstein-1 distance exponentially in time.

However, due to existence of the time discretization and the random mini-batch, there are several difficulties arising when applying the reflection coupling to analyze the SGLD algorithm, detailed as follows.
The first difficulty arises from the numerical discretization. In the time continuous interpolation \eqref{eq:sgld}, the drifts are evaluated at $T_k$ but the dynamics is evolving and the hitting time (defined in \eqref{eq:deftau}) in the reflection coupling could be between $[T_k, T_{k+1})$. This dismatch brings extra difficulty compared with the reflection coupling for time-continuous processes. 
Furthermore, when dealing with this difficulty, one needs to conduct careful estimate for the tail behavior of the multiplicative noise $\zeta_t$ (see \eqref{eq:Ztevolve} below).
In fact, although the diffusion part $\hat{W}_t$ in \eqref{eq:reflectioncoupling} or Lemma \ref{lmm:verifyBM} below is a Brownian motion, it correlates with the original Brownian motion $W_t$. Therefore, in \eqref{eq:Ztevolve} below, $\zeta_t = \int_{T_k}^t d\hat{W}_s - dW_s$ is a multiplicative noise. Estimate for this multiplicative noise is not trivial and we will overcome this via tools including the Burkholder-Davis-Gundy (BDG) inequality in Lemma \ref{lmm:zeta} below.
So far, to the best of our knowledge, there is scant literature addressing the ergodicity of discrete algorithms under such mild assumptions. These difficulties shall be addressed carefully using a series of conditional expectation estimates, detailed in Section \ref{section:omiited}.

The second difficulty arises from how to make use of the consistency of the random batch $\mathbb{E}_{\xi}[U^{\xi}(\cdot)]=U(\cdot)$ to prove the geometric ergodicity of SGLD. In our result, we only assumed the confining property for the expected potential. On each time subinterval $[T_k, T_{k+1})$ of the SGLD algorithm, one only sees the behavior of the process associated with $U^{\xi_k}$ rather than $U$. One has to consider the averaged dynamics so that our assumptions for the averaged potential $U$ can be used. So more technical details will be required to obtain the ergodicity, see Proposition \ref{pro:rbandconv}. Here we give a brief summary of Proposition \ref{pro:rbandconv} regarding the estimate for the random batch. Recall the paired dynamics $(\bar{X}_t, \bar{Y}_t)$ discussed above. After It\^o's calculation, one needs to estimate 
$$
\mathbb{E}\left[\phi(\bar{X}_t,\bar{Y}_t)(\nabla U^{\xi_k}(\bar{X}_{T_k}) - \nabla U^{\xi_k}(\bar{Y}_{T_k}))\right]
$$
for some function $\phi(\cdot,\cdot)$ and $t \in [T_k,T_{k+1})$. The key step is to use conduct the following splitting:
\begin{equation*}
\begin{aligned}
        &\quad\mathbb{E}\left[\phi(\bar{X}_t,\bar{Y}_t)(\nabla U^{\xi_k}(\bar{X}_{T_k}) - \nabla U^{\xi_k}(\bar{Y}_{T_k}))\right]\\
        &= \mathbb{E}\left[\phi(\bar{X}_{T_k},\bar{Y}_{T_k})(\nabla U^{\xi_k}(\bar{X}_{T_k}) - \nabla U^{\xi_k}(\bar{Y}_{T_k}))(\textbf{1}_A + \textbf{1}_B)\right] \\
         &\quad+ \mathbb{E}\left[(\phi(\bar{X}_{t},\bar{Y}_{t}) - \phi(\bar{X}_{T_k},\bar{Y}_{T_k})(\nabla U^{\xi_k}(\bar{X}_{T_k}) - \nabla U^{\xi_k}(\bar{Y}_{T_k}))(\textbf{1}_A + \textbf{1}_B)\right]
\end{aligned}
\end{equation*}
where $A := \{|\bar{X}_{T_k} - \bar{Y}_{T_k}| > R \}$ and $B:=A^c$ for some $R>0$. Since $\bar{X}_{T_k}$, $\bar{Y}_{T_k}$, $\textbf{1}_A$ are all independent of the random batch $\xi_k$, we are able to use the consistency of random batch $\mathbb{E}_{\xi}[U^{\xi}(\cdot)]=U(\cdot)$ and obtain the following for the first term:
\begin{multline*}
    \mathbb{E}\left[\phi(\bar{X}_{T_k},\bar{Y}_{T_k})(\nabla U^{\xi_k}(\bar{X}_{T_k}) - \nabla U^{\xi_k}(\bar{Y}_{T_k}))\right] = \mathbb{E}\left[\phi(\bar{X}_{T_k},\bar{Y}_{T_k})(\nabla U(\bar{X}_{T_k}) - \nabla U(\bar{Y}_{T_k}))\right] 
\end{multline*}
Actually, this equality above reveals the consistency between SGLD and the overdamped Langevin diffusion, since it remains true if we replace $\bar{Y}_t$ above with some solution to the overdamped Langevin diffusion.
Moreover, for the second term, under the event $A$, we use the uniform-in-batch Lipschitz condition in Assumption \ref{ass} to bound $\nabla U^{\xi_k}(\bar{X}_{T_k}) - \nabla U^{\xi_k}(\bar{Y}_{T_k})$, and the tail estimate obtained in Lemma \ref{lmm:zeta} below to estimate $(\phi(\bar{X}_{t},\bar{Y}_{t}) - \phi(\bar{X}_{T_k},\bar{Y}_{T_k})$. So eventually obtain an estimate for
$$
\mathbb{E}\left[\phi(\bar{X}_t,\bar{Y}_t)(\nabla U^{\xi_k}(\bar{X}_{T_k}) - \nabla U^{\xi_k}(\bar{Y}_{T_k}))\textbf{1}_A\right]
$$
under the event $A$ in Proposition \ref{pro:rbandconv}. For estimate under the event $B$, one cannot directly apply tail estimate in Lemma \ref{lmm:zeta} to estimate the remainder term in the splitting above. So the consistency of random batch is no longer used, but the uniform-in-batch Lipschitz condition in Assumption \ref{ass} is required in our derivation, scattered throughout the proof (see \eqref{eq:secondpart}, \eqref{eq:lmmabs} for instance). This uniform-in-batch Lipschitz condition is natural: Intuitively, the average of a family of non-smooth functions could be smooth, while in this case one cannot guarantee the convergence since the SGLD dynamics can evolve with non-smooth drift in all time.

The rest of the paper is organized as follows. In Section \ref{sec:assresults}, we list our main assumptions and main results. 
The detailed proof will be given in Section \ref{section:omiited}, where a series of key estimates for the conditional expectations will be given. Section \ref{sec:general} is for the generalization to drifts that are not necessarily gradients. In Appendix \ref{sec:lmmmoment}, some missing proofs will be given.

\section{Assumptions and main result}\label{sec:assresults}

\subsection{Local nonconvexity assumption}\label{sec:assumption}

We will use the reflection coupling to show the ergodicity under the following locally nonconvex setting, which is common in many practical tasks.

\begin{assumption}\label{ass}
    \begin{itemize}
        \item[(a)](locally nonconvex) The Hessian matrix of $U$ is uniformly positive definite outside $B(0,R_0)$, namely, there exist $R_0 >0$, $\kappa_0 > 0$ such that
        \begin{equation}
            \nabla^2 U(x) \succeq \kappa_0 I_d, \quad \forall x \in \mathbb{R}^d \setminus B(0,R_0);
        \end{equation}
        \item[(b)](global uniform-in-batch Lipshitz) There exists $K > 0$ such that $\forall x,y \in \mathbb{R}^d$, $\forall \xi \in \mathcal{S}$,
        \begin{equation}
            \left|\nabla U^{\xi}(x) - \nabla U^{\xi}(y) \right| \leq K |x-y|.
        \end{equation}
    Moreover, $\sup_{\xi}|\nabla U^{\xi}(0)|<\infty$.
    \end{itemize}
\end{assumption}

\begin{remark}
In many applications in data science, people study the empirical risk with a penalty \cite{mou2018generalization,farghly2021time}. Particularly, one may consider
\begin{equation*}
    \tilde{U} = \frac{1}{N} \sum_{i=0}^N \ell_i(x) + \frac{\lambda}{2} |x|^2.
\end{equation*}
If $\ell_i$'s have certain decay property as $|x|\to\infty$, the function $\tilde{U}$ satisfies Assumption \ref{ass}. For instance, one may consider $\tilde{U}$ being the cross-entropy loss with some additional $l^2$-penalty as in some machine learning tasks; one may also compare this with some analogous examples in the interaction particle systems, where one has suitable bounded interactions and some external force $U_0$ \cite{jin2020random,jin2022ergodicity}.
\end{remark}

From the locally nonconvex setting in Assumption \ref{ass}, it is not hard to derive the following strong monotonicity property for the pair $(x,y)$, which is useful in our analysis.
\begin{lemma}\label{lmm:ass}
Suppose Assumption \ref{ass} holds, then there exists $R\ge 2$, $\kappa > 0$ such that 
\begin{equation}\label{eq:lmmass}
    (x - y) \cdot (\nabla U(x) - \nabla U(y)) \geq \kappa |x - y|^2, \quad \forall x,y \in \mathbb{R}^d,\,|x-y|>R.
\end{equation}
\end{lemma}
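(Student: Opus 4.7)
The plan is to reduce the desired one-sided Lipschitz-type lower bound to a one-dimensional integration along the line segment from $y$ to $x$, exploiting the fact that when $|x-y|$ is large, the segment can spend only a bounded amount of arc length inside the bad ball $B(0, R_0)$.

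Concretely, I would parametrize $\gamma(t) = y + t(x-y)$ for $t \in [0,1]$ and, assuming $U$ is $C^2$ (which is consistent with the Hessian hypothesis), write
\[
(x-y) \cdot (\nabla U(x) - \nabla U(y)) = \int_0^1 (x-y)^\top \nabla^2 U(\gamma(t))\,(x-y)\, dt.
\]
Let $I = \{t \in [0,1] : \gamma(t) \in B(0, R_0)\}$. Since $\gamma$ is an affine parametrization of a straight line and the intersection of a line with the ball $B(0, R_0)$ is a chord of length at most $2R_0$, the Lebesgue measure satisfies $|I| \le 2R_0/|x-y|$. On $[0,1]\setminus I$ I apply Assumption \ref{ass}(a) to get $(x-y)^\top \nabla^2 U(\gamma(t))(x-y) \ge \kappa_0 |x-y|^2$, while on $I$ I use that Assumption \ref{ass}(b) (Lipschitz continuity of $\nabla U$, which is inherited by the expectation of $\nabla U^\xi$) yields $\nabla^2 U \succeq -K I_d$, so the integrand is at least $-K|x-y|^2$.

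Combining the two pieces and setting $L = |x-y|$ gives the lower bound
\[
(x-y) \cdot (\nabla U(x)-\nabla U(y)) \;\ge\; \kappa_0 L^2 \Bigl(1 - \tfrac{2R_0}{L}\Bigr) - K L^2 \cdot \tfrac{2R_0}{L} \;=\; \kappa_0 L^2 - 2R_0(\kappa_0 + K)\, L.
\]
To absorb the linear correction into a fraction of the quadratic term, I would fix $\kappa = \kappa_0/2$ and require $L \ge 4R_0(\kappa_0+K)/\kappa_0$; then the right-hand side dominates $\kappa L^2$. Finally I set $R := \max\bigl(2,\; 4R_0(\kappa_0+K)/\kappa_0\bigr)$, which simultaneously satisfies $R\ge 2$ and delivers the conclusion \eqref{eq:lmmass} whenever $|x-y|>R$.

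The only mild subtlety — and the step I would flag as the main obstacle — is the chord-length bookkeeping: one must justify that $|I|\le 2R_0/|x-y|$ uniformly in the positions of $x,y$, which comes down to the elementary geometric fact that a line meets a ball of radius $R_0$ in a segment of length at most $2R_0$ and that affine parametrization scales lengths by $|x-y|$. If one prefers not to assume $U\in C^2$ globally, the same argument goes through by mollification or by applying the fundamental theorem to the Lipschitz map $\nabla U$ (which is differentiable a.e.\ by Rademacher's theorem), with the a.e.\ bound $-K I_d \preceq \nabla^2 U$ on $I$ and $\kappa_0 I_d \preceq \nabla^2 U$ on the complement inherited from Assumption \ref{ass}.
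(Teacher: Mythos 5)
Your proof is correct and follows essentially the same route as the paper: integrate the Hessian quadratic form along the segment, bound the measure of the parameter set landing in $B(0,R_0)$ by $2R_0/|x-y|$, use the Lipschitz bound $\nabla^2 U\succeq -K I_d$ there and $\nabla^2 U\succeq\kappa_0 I_d$ elsewhere, and absorb the linear remainder by choosing $R=\max(2,\,4R_0(K+\kappa_0)/\kappa_0)$ and $\kappa=\kappa_0/2$. The only addition beyond the paper is your remark on mollification/Rademacher to justify the fundamental-theorem step without assuming $U\in C^2$.
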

The proof is deferred to Section \ref{sec:lmmmoment}.
Another useful observation from Assumption \ref{ass} is that, we are able to control $p$-th moment ($p>1$) of the SGLD iteration $X_t$ defined in \eqref{eq:sgld}. See the detailed proof in Section \ref{sec:lmmmoment}.

\begin{lemma}\label{lmm:sgldmoment}[Moment control for SGLD] Consider the SGLD iteration \eqref{eq:sgld}.
Suppose Assumption \ref{ass} holds.
\begin{enumerate}[(a)]
\item For any $p \ge 1$, any $T>0$ and any step size $\eta_k>0$,
        \begin{equation}\label{eq:lmm23}
            \sup_{0\leq t\leq T} \mathbb{E}\left[\sup_{0\leq s\leq t}\left|\bar{X}_s\right|^p\right] < +\infty.
        \end{equation}
        The upper bound may depend on $p$, $T$, $\beta$ and the dimension $d$.
\item Let $p\ge 2$. If $\exists \delta>0$ such that $\eta_k \le \kappa/(2(p-1)K^2)-\delta$ for all $k$, then
 \begin{equation}\label{eq:conclusion_a}
 \sup_{t\geq 0} \mathbb{E}\left|\bar{X}_t\right|^p < +\infty.
\end{equation}
        The upper bound may depend on $p$, $\beta$ and the dimension $d$.
\end{enumerate}
\end{lemma}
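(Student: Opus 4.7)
The two parts share a common one-step analysis on each interval $[T_k, T_{k+1})$; the difference is whether we prove a Gronwall-type (part (a)) or a contractive (part (b)) recursion. Assumption~\ref{ass}(b) supplies the linear growth bound $|\nabla U^{\xi}(x)| \le K|x| + C_0$ with $C_0 := \sup_\xi|\nabla U^{\xi}(0)|<\infty$, and Lemma~\ref{lmm:ass} applied with $y=0$, combined with the Lipschitz bound on the ball $B(0,R)$, yields the dissipativity
\begin{equation*}
x\cdot \nabla U(x) \ge \kappa |x|^2 - C \qquad \text{for all } x\in\bbR^d,
\end{equation*}
which is the crucial input for part (b).

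For part (a), I would apply It\^o's formula on each interval $[T_k,T_{k+1})$ to the regularized test function $V(x):=(1+|x|^2)^{p/2}$ (the regularization handles $p<2$). Using the linear growth of $\nabla U^{\xi}$ together with the bound $|\mathrm{tr}\,\nabla^2 V| \le C_p V^{1-2/p}+C_p$, the drift part grows at most linearly in $V(\bar X_t)$. The martingale piece is handled by the Burkholder--Davis--Gundy inequality and absorbed, yielding an estimate of the form
\begin{equation*}
\E\Bigl[\sup_{0\le s\le t}V(\bar X_s)\Bigr] \le V(\bar X_0)+C_1+C_2\int_0^t \E\Bigl[\sup_{0\le r\le s}V(\bar X_r)\Bigr]\,ds.
\end{equation*}
Gronwall then gives the finite-horizon bound with constants depending on $p,T,\beta,d$ but no restriction on $\eta_k$.

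For part (b), since $p\ge 2$, I would work with $f(x)=|x|^p$ directly. Applying It\^o (or second-order Taylor on the discrete iteration) on a single interval and taking $\E[\cdot\mid \mathcal F_{T_k}]$ — using independence of $\xi_k$ from $\mathcal F_{T_k}$ and $\E\nabla U^{\xi_k}=\nabla U$ — the first-order term becomes $-p\eta_k|\bar X_{T_k}|^{p-2}\bar X_{T_k}\cdot\nabla U(\bar X_{T_k})$, which the dissipativity above converts into $\le -p\kappa\eta_k |\bar X_{T_k}|^p + pC\eta_k|\bar X_{T_k}|^{p-2}$. The second-order pieces of the Hessian of $|x|^p$, namely $\tfrac{p}{2}|x|^{p-2}|y|^2$ and $\tfrac{p(p-2)}{2}|x|^{p-4}(x\cdot y)^2$, evaluated on $y=\Delta_k$ with $\E|\Delta_k|^2\lesssim K^2\eta_k^2|x|^2+\beta^{-1}d\eta_k$ and $\E(x\cdot\Delta_k)^2\lesssim K^2\eta_k^2|x|^4+\beta^{-1}\eta_k|x|^2$, combine to a coefficient $p(p-1)K^2\eta_k^2$ in front of $|\bar X_{T_k}|^p$, plus lower-order $|\bar X_{T_k}|^{p-2}$ terms which are absorbed via Young's inequality $a^{p-2}\le \varepsilon a^p + C(\varepsilon)$. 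The net one-step recursion is
\begin{equation*}
\E|\bar X_{T_{k+1}}|^p \le \bigl(1-p\kappa\eta_k + p(p-1)K^2\eta_k^2 + \varepsilon\eta_k\bigr)\E|\bar X_{T_k}|^p + C'\eta_k.
\end{equation*}
Under $\eta_k \le \kappa/(2(p-1)K^2)-\delta$, the parenthesis is bounded above by $1 - c\eta_k$ for some $c>0$ uniform in $k$ (choosing $\varepsilon$ small enough), whence $\sup_k \E|\bar X_{T_k}|^p<\infty$ by iteration. Extension from the grid $\{T_k\}$ to arbitrary $t$ follows from the single-interval estimate from part (a): since $\eta_k$ is uniformly bounded, the bound on $[T_k,T_{k+1})$ picks up only a bounded multiplicative constant and an additive constant in $\E|\bar X_{T_k}|^p$.

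The main obstacle is extracting the sharp constant $p-1$: one must keep track of the contribution of both Hessian components of $|x|^p$ and of the interaction between the drift and Brownian terms, then absorb every $|\bar X_{T_k}|^{p-2}$ remainder into the leading $|\bar X_{T_k}|^p$ without eating into the contraction rate. A subsidiary subtlety is that for the continuous-time interpolation one must also control $|\bar X_t|$ for $t\in[T_k,T_{k+1})$ in terms of $|\bar X_{T_k}|$ so that the second-order Taylor remainders stay under control; this is handled by the moment bound of part (a) on a single interval.
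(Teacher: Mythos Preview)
Your proposal is correct and follows essentially the same strategy as the paper: for part (a), It\^o's formula plus BDG plus Gr\"onwall (the paper works with $|x|^p$ for $p\ge 2$ and then H\"older for $p<2$, while you regularize via $(1+|x|^2)^{p/2}$, a cosmetic difference); for part (b), a one-step recursion in which the first-order drift term, after averaging over $\xi_k$, yields the dissipative $-\kappa|\bar X_{T_k}|^p$ and the second-order correction contributes the $(p-1)K^2\eta_k$ that dictates the stepsize threshold, with lower-order $|\bar X_{T_k}|^{p-2}$ terms absorbed by Young's inequality. The only difference in execution is that the paper stays in continuous time---applying It\^o to $|\bar X_t|^p$ on $[T_k,T_{k+1})$, splitting the drift as $K_1+K_2$ to pass from $\bar X_t$ back to $\bar X_{T_k}$, and then integrating the resulting ODE $\dot u(t)\le \lambda_1 u(t)-\lambda_2 u(T_k)+C$ across the interval---whereas you go directly to the discrete recursion via Taylor expansion of $|\bar X_{T_k}+\Delta_k|^p$; both routes produce the same contractive factor and the same constant $(p-1)$.
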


\subsection{Geometric ergodicity of SGLD}\label{sec:2-2}
By considering the behavior of $Z_t$, we aim to obtain a contraction result in terms of the Kantorovich-Rubinstein distance defined by:
\begin{equation}\label{eq:Wf}
    W_{f}(\mu, \nu):=\inf _{\gamma \in \Pi(\mu, \nu)} \int_{\mathbb{R}^{d} \times \mathbb{R}^{d}}f(|x-y|) d \gamma.
\end{equation}
One aims to find some suitable increasing, concave function $f$ such that $f(|\cdot|)$ is equivalent to $|\cdot|$ and hence one is able to control Wasserstein-1 distance using $W_f$. The specific function we consider in this work is given by
\begin{equation}\label{eq:f_sgld}
    f(r) := \int_0^r e^{-c_f (s \wedge R_1)} ds, \quad r \geq 0.
\end{equation}
Here, $R_1>3R/2$ and $c_f>0$ are constants to be determined. Clearly, $f$ is concave and increasing. Moreover, for $r \geq 0$,
\begin{equation}
    e^{-c_f R_1} r \leq f(r) \leq r.
\end{equation}
We will discuss more on the motivation of the construction for such paired dynamics $(\bar{X}_t, \bar{Y}_t)$ and the Lyapunov function in Section \ref{section:omiited}.

Next, we will take $c_f>0$ such that 
\begin{gather}\label{eq:condcf}
\frac{1}{2}\sqrt{2\beta^{-1}} c_f R^{-1} - K \ge 0
\end{gather}
and fix $R_1 := 2R$. Moreover, we consider small steps and require the upper bound for the time step $h := \sup_k \eta_k$ to satisfy 
\begin{gather}
\begin{split}
& h^{\frac{1}{2}} |\log h|^{\frac{1}{2}} \leq \min \left(\frac{1}{6 c'} e^{-2c_f R} \kappa, \sqrt{\bar{c}} \right),\\
& h \leq \bar{c}^{-\frac{1}{2}}h^{\frac{1}{2}} |\log h|^{\frac{1}{2}}(KR)^{-1},\quad h \le \min(1/(2K), R^2/9, 1)
\end{split}
\end{gather}
and
\begin{multline}\label{eq:condh3}
h \leq \min\Big( \frac{\bar{c}\beta R^2}{128 \log 5}, 
\frac{\bar{c}\beta R^2}{128\left(2c_f R + \log (18 / \kappa) \right)},
 \frac{\bar{c}\beta R^2 / 32}{\log \left( 45 \left(1 + \sqrt{2\beta}c_f^{-1}e^{2c_f R} K R \right)/2\right)}\Big),
\end{multline}
where $\bar{c}$, $c'$  are two positive constants independent of $k$ and the dimension $d$ coming from Lemma \ref{lmm:zeta} and Proposition \ref{pro:rbandconv} respectively.

We will establish in this work the following Wasserstein contraction results of SGLD. We leave the detailed proof to Section \ref{section:omiited}.

\begin{theorem}\label{thm:contraction}[Wasserstein contraction for SGLD]
Suppose Assumption \ref{ass} holds. For any two initial distributions $\mu_0$ and $\nu_0$, denote $\mu_t$ and $\nu_t$ to be the corresponding time marginal distributions for the time continuous interpolation of SGLD algorithm \eqref{eq:sgld}. Denote $h := \sup_k \eta_k$. Let $f$ be the Lyapunov function defined in \eqref{eq:f_sgld}.  Assume that $h$ and the parameters satisfy conditions in \eqref{eq:condcf}-\eqref{eq:condh3}, then the following Wasserstein contraction result holds:
\begin{equation}
    W_f(\mu_{T_k}, \nu_{T_k}) \leq e^{-cT_k}W_f(\mu_0,\nu_0), \quad k \in \mathbb{N},
\end{equation}
where
\begin{equation*}
c = \frac{1}{3}e^{-2c_f R}\min\left(\sqrt{2\beta^{-1}} c_f R/2, \kappa \right).
\end{equation*}
Consequently,
\begin{equation}
    W_1(\mu_{T_k}, \nu_{T_k}) \leq c_0 e^{-cT_k}W_1(\mu_0,\nu_0), \quad k \in \mathbb{N},\quad c_0 := e^{2c_f R}.
\end{equation}
\end{theorem}

The contraction rate is not necessarily the optimal one, which we believe is dimension-free (see the discussion in Remark \ref{rmk:d}). We have listed many restrictions on the step size. For the second restriction, the most essential one is that we need $h<1/K$ for the contraction to hold. Here, we required $h\le 1/(2K)$ instead for the formulas of contraction rate to be of reasonable order. Other restrictions on the step size can be relaxed somehow (for example $3R/2$ can be replaced by a number close to $R$ and the numerates are loose). They are chosen just to make the formula of contraction rate appear clean. However, the dependence of $\beta$ in the upper bound of the third restriction is essential. Besides, only  Lemma \ref{lmm:sgldmoment} (a) is needed for the proof, so the restriction in Lemma \ref{lmm:sgldmoment} (b) is not included.

Moreover, if the step size (or learning rate) is constant $\eta_k\equiv \eta$ such that the discrete chain is time-homogeneous, then the SGLD as a discrete time Markov chain has an invariant measure $\tilde{\pi}$ by the Banach contraction mapping theorem  \cite{lax2002functional}. In particular, we have the following corollary:
\begin{corollary}\label{coro:ergodicity}[Wasserstein ergodicity of SGLD]
Consider the SGLD with constant step size $\eta_k\equiv \eta$. Assume that the step size $\eta$ satisfies the restrictions in Theorem \ref{thm:contraction}, for any initial distribution $\rho_0 \in W_1$, the SGLD iteration has a unique invariant measure $\tilde{\pi}$, and the time marginal distribution $\bar{\rho}_t$ of \eqref{eq:sgld} satisfies for the constants $c_0, c$ in Theorem \ref{thm:contraction} that
\begin{equation}\label{W1ergodicity}
    W_1(\bar{\rho}_{n\eta}, \tilde{\pi}) \leq c_0 e^{-cn\eta}W_1(\rho_0, \tilde{\pi}).
\end{equation}
\end{corollary}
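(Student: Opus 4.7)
The plan is to derive the corollary directly from Theorem \ref{thm:contraction} by regarding SGLD with constant step size $\eta$ as a time-homogeneous Markov chain with one-step transition operator $P$, and then invoking Banach's fixed-point theorem on a suitable complete metric space of probability measures.

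First I would check that $P$ preserves $\mathcal{P}_1(\mathbb{R}^d)$, the space of Borel probabilities with finite first moment. This is a one-line consequence of Assumption \ref{ass}(b): a single step gives
\begin{equation*}
\mathbb{E}|\bar{X}_\eta|\le (1+\eta K)\,\mathbb{E}|\bar{X}_0|+\eta\sup_\xi|\nabla U^\xi(0)|+C\sqrt{\eta/\beta},
\end{equation*}
which is finite as soon as $\mathbb{E}|\bar{X}_0|<\infty$; alternatively Lemma \ref{lmm:sgldmoment}(a) already supplies moment control on any finite time interval.

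Next I would view $(\mathcal{P}_1,W_f)$ as a metric space. Because $f$ in \eqref{eq:f_sgld} satisfies $e^{-c_f R_1}r\le f(r)\le r$, the distance $W_f$ is bilipschitz equivalent to $W_1$; in particular $(\mathcal{P}_1,W_f)$ is complete. Applying Theorem \ref{thm:contraction} with $k=1$ to arbitrary $\mu,\nu\in\mathcal{P}_1$ yields $W_f(\mu P,\nu P)\le e^{-c\eta}W_f(\mu,\nu)$, so $P$ is a strict contraction on this complete space. Banach's fixed-point theorem then delivers a unique $\tilde\pi\in\mathcal{P}_1$ with $\tilde\pi P=\tilde\pi$, i.e.\ a unique invariant probability measure for the discrete chain.

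Finally, to obtain the quantitative estimate \eqref{W1ergodicity} for the interpolated process, I would take $\nu_0=\tilde\pi$ in the second displayed inequality of Theorem \ref{thm:contraction}; stationarity gives $\nu_{T_k}=\tilde\pi$ for all $k$, and the inequality collapses to $W_1(\bar\rho_{n\eta},\tilde\pi)\le c_0 e^{-cn\eta}W_1(\rho_0,\tilde\pi)$ as required. There is no genuine obstacle here; the only points that deserve mild care are (i) keeping the discrete chain, on which the Banach argument takes place, distinct from the continuous-time interpolation \eqref{eq:sgld} that \eqref{W1ergodicity} is stated for, and (ii) observing that the two coincide at the sampling times $T_k=n\eta$, so existence and uniqueness of $\tilde\pi$ transfer immediately and no extra work is needed for the interpolated process.
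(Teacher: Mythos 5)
Your argument is correct, and it is in fact a slightly cleaner route than the one in the paper. The paper works in $W_1$: since the $W_1$-estimate in Theorem \ref{thm:contraction} carries the prefactor $c_0=e^{c_fR_1}>1$, the one-step map is not obviously a $W_1$-contraction, so the paper first finds $k_0$ with $c_0e^{-cT_{k_0}}\le 1/2$, applies Banach to the $k_0$-step map $P_{k_0}$ to get a fixed point $\pi_*$, and then uses the Ces\`aro-average trick $\tilde\pi=\frac{1}{k_0}\sum_{n=0}^{k_0-1}\pi_*P_n$ to upgrade this to an invariant measure of the one-step chain (and observes a posteriori that $\tilde\pi=\pi_*$). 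You instead notice that the $W_f$-inequality in Theorem \ref{thm:contraction} already gives a strict one-step contraction $W_f(\mu P,\nu P)\le e^{-c\eta}W_f(\mu,\nu)$ with no prefactor, and that $(\mathcal{P}_1,W_f)$ is complete because $W_f$ is bilipschitz-equivalent to $W_1$. That lets you apply Banach directly to $P=P_1$ and dispense with both the choice of $k_0$ and the averaging step. Your extra remark that $P$ maps $\mathcal{P}_1$ into itself (via Assumption \ref{ass}(b) or Lemma \ref{lmm:sgldmoment}(a)) is a detail the paper leaves implicit but is worth stating. The final step, setting $\nu_0=\tilde\pi$ so that $\nu_{T_k}\equiv\tilde\pi$ by stationarity, matches the paper exactly.
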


\begin{proof}
By Theorem \ref{thm:contraction}, there exists $k_0 \in \mathbb{N}_{+}$  such that 
\begin{equation}
    W_1(\mu_{T_{k_0}}, \nu_{T_{k_0}}) \leq \frac{1}{2} W_1(\mu_0,\nu_0).
\end{equation}
Denote the corresponding transition kernel for $n$th iteration by $P_{n}$. Then, $\mu \mapsto \mu P_{k_0}$ is contractive. By contraction mapping theorem, there exists a fixed point $\pi_*$ satisfying
\begin{equation}
    \pi_* = \pi_* P_{k_0}.
\end{equation}
Then, by Markov property, $\tilde{\pi} := \frac{1}{k_0}\sum_{n=0}^{k_0-1} \pi_* P_n$ is the invariant measure of the SGLD iteration. 
Moreover, $\tilde{\pi} = \tilde{\pi} P_{k_0}$ for any invariant measure so that the invariant measure is unique by the contraction property of $P_{k_0}$. Besides, $\tilde{\pi}=\pi_*$. 

Letting $\nu_{n\eta} \equiv \tilde{\pi}$ in Theorem \ref{thm:contraction}, \eqref{W1ergodicity} then follows.
\end{proof}

Under Assumption \ref{ass}, $\pi \propto e^{-U}$ satisfies the log-Sobolev inequality, and one can get a uniform-in-time error estimate using KL divergence in \cite{li2022sharp}. We are then able to estimate the $W_1$ distance between the target distribution $\pi$ and the invariant measure $\tilde{\pi}$ of the SGLD algorithm. In fact, for constant step size $\eta$, by \cite[Theorem 3.2]{li2022sharp}, the discretization error in terms of relative entropy (or KL-divergence) is given by 
\begin{equation}\label{eq:DKL}
    H(\bar{\rho}_{n\eta} || \rho_{n\eta}) \leq A_0 \eta^2, \quad \forall n \in  \mathbb{N},
\end{equation}
where $\bar{\rho}_{n\eta}$, $\rho_{n\eta}$ correspond to the SGLD iteration and the overdamped Langevin diffusions, respectively. As a remark, the constant $A_0$ scales almost linearly with the dimension $d$ under certain assumptions. The reason for the improved error bound in \eqref{eq:DKL} (from $O(\sqrt{\eta})$ to $O(\eta)$ in terms of Wasserstein or total vatiantion distance, in comparison with existing results like \cite{farghly2021time,mou2018generalization,zou2021faster,chau2021stochastic,zhang2023nonasymptotic}) is that, starting from the Fokker-Planck equation for the discrete algorithm, the authors directly considered the distance between distribution instead of other trajectory methods; also, techniques like Girsanov's transform were applied to handle additional difficulties brought by the random batch. As a consequence of \eqref{eq:DKL}, since $\pi$ satisfies the log-Sobolev inequality, the Wasserstein-1 distance can be controlled by the square root of the KL-divergence by some classical transportation inequalities \cite{otto2000generalization,talagrand1991new}, enabling one to derive an improved sampling error bound $W_1\left(\bar{\rho}_{n\eta}, \pi \right)$ for SGLD (\cite{li2022sharp}, Corollary 5.1). We conclude the result in the following corollary.
\begin{corollary}
Consider the SGLD with constant step size $\eta$ and denote its density at time $n\eta$ by $\bar{\rho}_{n\eta}$. Under Assumption \ref{ass}, for the step size $\eta$ small enough (with the restrictions in Theorem \ref{thm:contraction} and in Theorem 3.2 of \cite{li2022sharp}), for some positive $A$, $C_1$, $C_2$ independent of $\rho_0$, $\eta$ we have
\begin{equation}
    W_1\left(\bar{\rho}_{n\eta}, \pi \right) \leq C_0\eta + C_1 e^{-C_2 n\eta}.
\end{equation}
Moreover, the SGLD iteration has a unique invariant measure $\tilde{\pi}$ satisfying 
\begin{equation}\label{eq:pipi}
    W_1(\tilde{\pi}, \pi) \leq A\eta,
\end{equation}
where $\pi \propto e^{-\beta U}$ is the target distribution.
\end{corollary}


\section{Proof of the Theorem \ref{thm:contraction}}\label{section:omiited}

In this section, we prove Theorem \ref{thm:contraction} - the contraction property under the $W_f$ distance.
In the following, we will apply the technique of reflection coupling discussed in the introduction to analyze SGLD. See Appendix \ref{app:detail} for more details on the construction of the reflection coupling and the Lyapunov function.

We summarize here several challenges we would overcome in the analysis. The first difficulty arises from how to make use of the consistency of the random batch $\mathbb{E}_{\xi}[U^{\xi}(\cdot)]=U(\cdot)$ to prove the geometric ergodicity of SGLD. In our result, we only assumed the confining property for the expected potential. On each time subinterval $[T_k, T_{k+1})$ of the SGLD algorithm, one only sees the behavior of the process associated with $U^{\xi_k}$ rather than $U$. Therefore, one has to consider the averaged dynamics so that our assumptions for the averaged potential $U$ can be used. So more technical details will be required to obtain the ergodicity, see Proposition \ref{pro:rbandconv} below. Secondly, we look into the issues that come with numerical discretization - given the discrete nature of the scheme, the drift term for SGLD is evaluated at $X_{T_k}$ instead of $X_t$, introducing additional challenging elements into our analysis. Furthermore, when dealing with this difficulty coming from time discretization, one needs to carefully estimate the tail behavior of the multiplicative noise $\zeta_t$ in \eqref{eq:zetaj_def}. In fact, although the diffusion part $\hat{W}_t$ in \eqref{eq:reflectioncoupling} or Lemma \ref{lmm:verifyBM} is a Brownian motion, it correlates with the original Brownian motion $W_t$. Therefore, in \eqref{eq:Ztevolve}, $\zeta_t = \int_{T_k}^t d\hat{W}_s - dW_s$ is a multiplicative noise. Estimate for this multiplicative noise is not trivial and we will overcome this via tools including the Burkholder-Davis-Gundy (BDG) inequality in Lemma \ref{lmm:zeta} below.


\subsection{Reflection coupling for SGLD}\label{sec:reflection}

For any two initial distributions $\mu_0$, $\nu_0$ in the statement of Theorem \ref{thm:contraction}, we construct the following reflection coupling:
\begin{equation}\label{eq:reflectioncoupling}
    \begin{aligned}
        & d\bar{X}_t = -\nabla U^{\xi_k} (\bar{X}_{T_k}) dt + \sqrt{2\beta^{-1}} dW, \quad t\in [T_k,T_{k+1}), \quad t < \tau;\\
        & d\bar{Y}_t = -\nabla U^{\xi_k} (\bar{Y}_{T_k})dt + \sqrt{2\beta^{-1}} \left(I_d - 2 e_t \otimes e_t \right) \cdot dW, \quad t \in [T_k,T_{k+1}), \quad t < \tau;\\
        & \bar{X}_t = \bar{Y}_t,\quad t \geq \tau,
    \end{aligned}
\end{equation}
where 
\begin{equation}
    e_t := \frac{\bar{X}_t - \bar{Y}_t}{|\bar{X}_t - \bar{Y}_t|},
\end{equation}
and the stopping time $\tau$ is defined by 
\begin{equation}\label{eq:deftau}
    \tau:=\inf \{t\geq 0: \bar{X}_t = \bar{Y}_t \}.
\end{equation}
Moreover, the initials $\bar{X}_0$, $\bar{Y}_0$ of \eqref{eq:reflectioncoupling} should be chosen such that 
\begin{equation}\label{eq:optimalinitialcouple}
    \mathbb{E}f(|\bar{X}_0 - \bar{Y}_0|) = W_f(\mu_0,\nu_0).
\end{equation}
Recall the definition of $W_f$ in \eqref{eq:Wf}. For any two $\mu_0$, $\nu_0$ in Theorem \ref{thm:contraction}, \eqref{eq:optimalinitialcouple} can actually be achieved since one can always choose an optimal coupling $\gamma \in \pi(\mu_0,\mu_0)$ such that $\int_{\mathbb{R}^d \times \mathbb{R}^d} f(|x-y|) d\gamma =  W_f(\mu_0,\nu_0)$ \cite{villani2009optimal}, and in this case, $\bar{X}_0 \sim \mu_0$ and $\bar{Y}_0 \sim \nu_0$.

Note that $\int_0^{t}(I_d - 2\textbf{1}_{\{s<\tau \}} e_s \otimes e_s)\cdot dW_s$ is also a Brownian motion. Then, $\bar{Y}_t$ is thus also a copy of the time continuous interpolation of SGLD. Therefore, \eqref{eq:reflectioncoupling} is a well-defined coupling for the SGLD iteration.
Similar arguments also appeared in related literature like \cite{eberle2011reflection, eberle2016reflection}. We summarize this in the following Lemma:
\begin{lemma}\label{lmm:verifyBM}
Under the settings of \eqref{eq:reflectioncoupling} and \eqref{eq:deftau}, the process
\begin{equation*}
    \hat{W}_t := \int_0^t (I_d - 2 \textbf{1}_{\{s<\tau \}}e_s  e_s^T)  dW_s
\end{equation*}
is a Brownian motion in $\mathbb{R}^d$ with respect to the natural filtration. Consequently, $\bar{Y}_t$ is also a realization of SGLD \eqref{eq:sgld}.
\end{lemma}
\begin{proof}
Clearly, $\hat{W}_0 = 0$ and $\hat{W}_t$ is a martingale with respect to $\mathcal{F}_t := \sigma (W_s:s\leq t)$. Then by Levy's characterization of Brownian motion, one only needs to verify that for any $t' > t > 0$, $\mathbb{E}[\hat{W}_{t'} \otimes \hat{W}_t] = tI_d$. Indeed, by independent increment of the Brownian motion $W_t$, one has
\begin{equation*}
\begin{aligned}
    \mathbb{E}\left[\hat{W}_{t'} \otimes \hat{W}_t\right] &= \mathbb{E}\left[\left(\int_0^t (I_d - 2\textbf{1}_{\{s<\tau \}} e_s  e_s^T)  dW_s\right)\left(\int_0^t (I_d - 2 \textbf{1}_{\{s<\tau \}}e_s  e_s^T)  dW_s\right)^T\right]\\
    &=\int_0^t \mathbb{E}\left[(I_d - 2 \textbf{1}_{\{s<\tau \}}e_s  e_s^T)(I_d - 2\textbf{1}_{\{s<\tau \}} e_s  e_s^T)^T\right] ds = tI_d,
\end{aligned}
\end{equation*}
where the last inequality is due to the fact that 
\begin{equation*}
    e_s^T e_s = \frac{\left(\bar{X}_s - \bar{Y}_s\right)^T}{|\bar{X}_s - \bar{Y}_s|} \frac{\left(\bar{X}_s - \bar{Y}_s\right)}{|\bar{X}_s - \bar{Y}_s|} = 1, \quad \forall s \geq 0.
\end{equation*}
Therefore, the process $\hat{W}_t$ is a Brownian motion in $\mathbb{R}^d$. Consequently, $\bar{Y}_t$ is also a solution of SGLD \eqref{eq:sgld}.
\end{proof}

Denote $Z_t := \bar{X}_t - \bar{Y}_t$. Then for $t\in[T_k,T_{k+1})$ and $t<\tau$,  the process $Z$ satisfies
\begin{equation}\label{eq:1_5}
    dZ_t = -\left(\nabla U^{\xi_k} (\bar{X}_{T_k}) - \nabla U^{\xi_k} (\bar{Y}_{T_k}) \right) dt + 2\sqrt{2\beta^{-1}} \frac{Z_t^{\otimes 2}}{|Z_t|^2} \cdot dW,
\end{equation}
and $Z_t = 0$ for all $t\geq \tau$.

Clearly, the process $Z_t$ defined in \eqref{eq:1_5} satisfies for $t \in [T_k,T_{k+1})$, 
\begin{equation}\label{eq:Ztevolve}
    Z_t = Z_{T_k} - (t\wedge \tau - T_k\wedge \tau)\, A_{T_k} + 2\sqrt{2\beta^{-1}}\zeta_t,
\end{equation}
where the process $\zeta_t$ is defined by 
\begin{equation}\label{eq:zeta_def}
    \zeta_t := \int_{{T_k} \wedge \tau}^{t \wedge \tau} \frac{Z_s^{\otimes 2}}{|Z_s|^2} \cdot dW_s ,
\end{equation}
and 
\begin{equation}\label{eq:ATKdef}
    A_{T_k} := \nabla U^{\xi_k}(\bar{X}_{T_k}) - \nabla U^{\xi_k}(\bar{Y}_{T_k}).
\end{equation}
Clearly, by optional stopping theorem \cite{durrett2018stochastic}, $\zeta_t$ is a martingale. Later in Lemma \ref{lmm:zeta} and Corollary \ref{lmm:abs_zeta}, we will prove some sub-Guassian properties of such martingale. These estimates are very helpful to overcome the challenge brought by numerical discretization. We remark that  \eqref{eq:Ztevolve} and \eqref{eq:zeta_def} also guarantee that $Z_t \equiv Z_{T_k} = 0$ for $t\ge T_k \geq \tau$, which is consistent with the definition of the coupling.

\subsection{Geometric ergodicity and uniform estimate for  SGLD}\label{sec:sec32}
Recall that the increasing, concave function $f$ is of the form
\begin{equation}
    f(r) = \int_0^r e^{-c_f (s \wedge R_1)} ds, \quad r \geq 0.
\end{equation}
With the construction in \eqref{eq:reflectioncoupling}, we are then able to prove the geometric ergodicity of SGLD. In fact, due to the argument at the beginning of Section \ref{section:omiited}, we aim to show that
\begin{equation*}
    \mathbb{E}f(|Z_t|) \leq e^{-ct}\mathbb{E}f(|Z_0|),
\end{equation*}
which is clearly equivalent to
\begin{equation}\label{eq:original_goal}
    \mathbb{E}f(|Z_{t \wedge \tau}|) \leq e^{-ct}\mathbb{E}f(|Z_0|),
\end{equation}
Introduce the regularization stopping time sequence
\begin{gather}\label{eq:tauj}
\tau_j := \inf\{t\geq 0: |Z_t| \notin (j^{-1},j) \},
\quad j \in \mathbb{N}_{+},
\end{gather}
which is increasing and can be proved to converge to $\tau$ as $j \rightarrow \infty$ later in Lemma \ref{lmm:tauj}. Hence to obtain \eqref{eq:original_goal}, by Fatou's Lemma, one needs to show that
\begin{equation*}
        \mathbb{E}f(|Z_{t \wedge \tau_j}|) \leq e^{-ct}\mathbb{E}f(|Z_0|).
\end{equation*}
Therefore, the main goal in the proof of Theorem \ref{thm:contraction} is to give the following uniform estimate:
\begin{equation}
    \frac{d}{dt} \mathbb{E} f(|Z_{t \wedge \tau_j}|) \leq -c \mathbb{E} f(|Z_{t \wedge \tau_j}|),
\end{equation}
where $c$ is independent of $j$, $\eta_k$ and $\xi_k$.

In the following, we give the proof of our main result, Theorem \ref{thm:contraction}. Some auxiliary lemmas and their proofs will be given in Section \ref{sec:proplmm}.

\begin{proof}[Proof of Theorem \ref{thm:contraction}]
Recall the definition of $\tau_j$ in \eqref{eq:tauj}.
We first fix $T>0$ and consider those $k$ values such that $T_{k+1}\le T$.
Consider the process $Z_t^{\tau_j}:= Z_{t\wedge \tau_j}$.  Clearly, for $t \in [T_k,T_{k+1})$,
\begin{equation}\label{eq:Zttauj}
    Z_t^{\tau_j} = Z_{T_k}^{\tau_j} - (t\wedge \tau_j - T_k\wedge \tau_j)\, A_{T_k} + 2\sqrt{2\beta^{-1}}\zeta_t^{\tau_j},
\end{equation}
with
\begin{equation}\label{eq:zetaj_def}
    \zeta_t^{\tau_j} := \int_{{T_k} \wedge \tau_j}^{t \wedge \tau_j} \frac{Z_s^{\otimes 2}}{|Z_s|^2} \cdot dW_s.
\end{equation}
In fact, $\tau_j\le \tau$. If $\tau_j\le T_k$, $Z_t^{\tau_j}=Z_{T_k}^{\tau_j}$ and one can focus on the previous subinterval. If $\tau_j\in [T_k, T_{k+1})$, one may verify that this holds.

Corresponding to \eqref{eq:Zttauj}, the process $Z_t$ satisfies for $t \in (T_k\wedge \tau_j,T_{k+1}\wedge \tau_j)$,
\begin{equation*}
    dZ_t = -\left(\nabla U^{\xi_k} (\bar{X}_{T_k}) - \nabla U^{\xi_k} (\bar{Y}_{T_k}) \right) dt + 2\sqrt{2\beta^{-1}} \frac{Z_t^{\otimes 2}}{|Z_t|^2} \cdot dW.
\end{equation*}
Since
\begin{equation*}
    \nabla^2 f(|x|) = f''(|x|)\frac{x\otimes x}{|x|^2} + \frac{f'(|x|)}{|x|}\left(I_d - \frac{x \otimes x}{|x|^2} \right),
\end{equation*}
by Dykin's formula and the strong Markov property \cite{durrett2018stochastic},  one has then for $t \in [T_k,T_{k+1})$,
\begin{equation}\label{eq:dEdt}
    \frac{d}{dt} \mathbb{E} \left[ f(|Z_t^{\tau_j}|) \right] = \mathbb{E}\left[\left(\sqrt{2\beta^{-1}}f''(|Z_t^{\tau_j}|) - f'(|Z_t^{\tau_j}|) \frac{Z_t^{\tau_j}}{|Z_t^{\tau_j}|} \cdot A_{T_k}\right) \textbf{1}_{\{t < \tau_j \}}\right].
\end{equation}

Our goal is to obtain an upper bound of the form $-c\mathbb{E}\left[f(|Z_{t}^{\tau_j}|)\right]$ of the right hand side of \eqref{eq:dEdt}, where $A_{T_k}$ is computed using $U^{\xi}(X_{T_k})$ and $U^{\xi}(Y_{T_k})$. Note that we only assume convexity property for $U$ outside $B(0, R)$ as stated in Lemma \ref{lmm:ass}, so we firstly split the expectation in \eqref{eq:dEdt} into the following three parts
\begin{equation}\label{eq:aftersplit}
\begin{aligned}
    &\frac{d}{dt} \mathbb{E} \left[ f(|Z_t^{\tau_j}|) \right] = \mathbb{E}\left[-f'(|Z_t^{\tau_j}|) \frac{Z_t^{\tau_j}}{|Z_t^{\tau_j}|} \cdot A_{T_k} \textbf{1}_{\{|Z_{T_k}^{\tau_j}| > R \}}\textbf{1}_{\{t < \tau_j \}}\right] \\
    &+ \left(\mathbb{E}\left[ \sqrt{2\beta^{-1}}f''(|Z_t^{\tau_j}|)\textbf{1}_{\{t < \tau_j \}}\right] 
    + \mathbb{E}\left[-f'(|Z_t^{\tau_j}|) \frac{Z_t^{\tau_j}}{|Z_t^{\tau_j}|} \cdot A_{T_k} \textbf{1}_{\{|Z_{T_k}^{\tau_j}| \leq R \}}\textbf{1}_{\{t < \tau_j \}}\right]\right)\\
    & =: I_1(t) + I_2(t).
\end{aligned}
\end{equation}

Moreover, when estimating the right hand side of \eqref{eq:aftersplit}, we need to further split them like in Taylor's expansion. The essence of this splitting lies in two main actions: (1) conduct evaluations at $t = T_k$, which enables the utilization of the tower property by taking expectation with respect to the random batch $\xi_k$ first; (2) estimate the residual terms carefully. The main reason for such splitting is that, for $t \in [T_k,T_{k+1})$, $Z_t$ depends on the random batch $\xi_k$; for instance, $\mathbb{E}_{\xi_k}\left[Z_t \cdot \left(\nabla U^{\xi_k} (\bar{X}_{T_k}) - \nabla U^{\xi_k} (\bar{Y}_{T_k}) \right)\right] \neq \mathbb{E}_{\xi_k}\left[Z_t \cdot \left(\nabla U (\bar{X}_{T_k}) - \nabla  (\bar{Y}_{T_k}) \right) \right]$. With this main idea of splitting, in the following we will separately estimate each term in \eqref{eq:aftersplit}, and we will take $j$ sufficiently large such that  Lemma \ref{lmm:smallprobcontrol} (serving Proposition \ref{pro:rbandconv} below), Lemma \ref{lmm:lmm2} (serving the term $I_2(t)$) and Proposition \ref{pro:rbandconv} (serving the term $I_1(t)$) in Section \ref{sec:proplmm} below hold.


For the term $I_1(t)$, we will make use of the convexity condition along with the tail estimate in Lemma \ref{lmm:zeta}. By Proposition \ref{pro:rbandconv} (which is based on Lemma \ref{lmm:zeta}), for the step size $\eta_k $ in the range considered, then for $t \in [T_k,T_{k+1})$
\begin{equation}\label{eq:I3estimate}
    I_1(t) \leq - \left(e^{-c_f R_1}\kappa - c' \eta_k^{\frac{1}{2}}|\log \eta_k|^{\frac{1}{2}}-3e^{-\bar{c}\beta R^2 \eta_k^{-1}/128}  \right) \mathbb{E}\left[|Z_{T_k}^{\tau_j}| \textbf{1}_{\{|Z_{T_k}^{\tau_j}| > R \}} \right].
\end{equation}
Since $|Z_t^{\tau_j}| \leq |Z_{T_k}^{\tau_j}| + \eta_k K |Z_{T_k}^{\tau_j}| + |\zeta_t^{\tau_j}|$ by Assumption \ref{ass}, we have
\begin{gather*}
    -\mathbb{E}\left[|Z_{T_k}^{\tau_j}| \textbf{1}_{\{|Z_{T_k}^{\tau_j}| > R \}} \right]
    \leq -\frac{1}{1 + \eta_k K}\left(\mathbb{E}\left[|Z_{t}^{\tau_j}| \textbf{1}_{\{|Z_{T_k}^{\tau_j}| > R \}} \right] - \mathbb{E}\left[|\zeta_t^{\tau_j}| \textbf{1}_{\{|Z_{T_k}^{\tau_j}| > R \}} \right] \right).
\end{gather*}
Clearly, 
\begin{equation}
     \mathbb{E}\left[|\zeta_t^{\tau_j}| \textbf{1}_{\{|Z_{T_k}^{\tau_j}| > R \}} \right] \leq \sqrt{\eta_k} \mathbb{P}\left(|Z_{T_k}^{\tau_j}|>R\right)
     \le \sqrt{\eta_k}R^{-1}\mathbb{E}\left[|Z_{T_k}^{\tau_j}| \textbf{1}_{\{|Z_{T_k}^{\tau_j}| > R \}} \right].
\end{equation}
Therefore, for the event $\{|Z_{T_k}^{\tau_j}|>R \}$, one has
\begin{equation*}
\quad-\mathbb{E}\left[|Z_{T_k}^{\tau_j}| \textbf{1}_{\{|Z_{T_k}^{\tau_j}| > R \}} \right]
    \leq -\frac{1}{(1+\sqrt{\eta_k}R^{-1})(1+\eta_k K)}\mathbb{E}\left[|Z_{t}^{\tau_j}| \textbf{1}_{\{|Z_{T_k}^{\tau_j}| > R \}} \right].
\end{equation*}
This then implies for $\eta_k \le \min(1/(2K), R^2/9)$ that
\begin{equation}\label{eq:eqq332}
    -\mathbb{E}\left[|Z_{T_k}^{\tau_j}| \textbf{1}_{\{|Z_{T_k}^{\tau_j}| > R \}} \right] \leq
    -\frac{1}{2}\mathbb{E}\left[|Z_{t}^{\tau_j}| \textbf{1}_{\{|Z_{T_k}^{\tau_j}| > R \}} \right]
    \le -\frac{1}{2}\mathbb{E}\left[|Z_{t}^{\tau_j}| \textbf{1}_{\{|Z_{T_k}^{\tau_j}| > R \}}\textbf{1}_{\{t < \tau_j \}} \right],
\end{equation}
for large $j$.

Now combining \eqref{eq:I3estimate} and \eqref{eq:eqq332}, we have the following estimate for the term $I_1(t)$:
\begin{equation}\label{eq:I3final}
    I_1(t) \leq  - \frac{1}{2}\left(e^{-c_f R_1}\kappa - c' \eta_k^{\frac{1}{2}}|\log \eta_k|^{\frac{1}{2}}-3e^{-\frac{\bar{c}\beta R^2}{128\eta_k}}\right) \mathbb{E}\left[|Z_{t}^{\tau_j}| \textbf{1}_{\{|Z_{T_k}^{\tau_j}| > R \}}\textbf{1}_{\{t < \tau_j \}} \right].
\end{equation}

For $I_2(t)$, the strategy is to make use of $f''$ that provides a negative part when $|Z_t|$ is small. This can also be understood as utilization of the convexity from the concave function $f$ when $U$ does not has convexity in $B(0,R)$.

The first part in $I_2(t)$ is given using the definition by
\begin{equation}\label{eq:firstpart}
\mathbb{E}\left[ \sqrt{2\beta^{-1}}f''(|Z_t^{\tau_j}|)\textbf{1}_{\{t < \tau_j \}}\right]=\mathbb{E}\left[-\sqrt{2\beta^{-1}} c_f e^{-c_f |Z_t^{\tau_j}|} \textbf{1}_{\{|Z_t^{\tau_j}| \leq R_1 \}} \textbf{1}_{\{t < \tau_j \}}\right].
\end{equation}
For the second part in $I_2(t)$, by Assumption \ref{ass}, Lemma \ref{lmm:ass} and definition of $f$, one has
\begin{equation}\label{eq:secondpart}
    \begin{aligned}
    \quad &\mathbb{E}\left[-f'(|Z_t^{\tau_j}|) \frac{Z_t^{\tau_j}}{|Z_t^{\tau_j}|} \cdot A_{T_k} \textbf{1}_{\{|Z_{T_k}^{\tau_j}| \leq R \}}\textbf{1}_{\{t < \tau_j \}}\right]
    \leq \mathbb{E}\left[Kf'(|Z_t^{\tau_j}|)|Z_{T_k}^{\tau_j}| \textbf{1}_{\{|Z_{T_k}^{\tau_j}| \leq R \}}\textbf{1}_{\{t < \tau_j \}}\right] \\
    &\le
    \mathbb{E}\left[Ke^{-c_f |Z_t^{\tau_j}|}|Z_{T_k}^{\tau_j}|\textbf{1}_{\{|Z_{T_k}^{\tau_j}| \leq R, \, |Z_t^{\tau_j}| \leq R_1 \}}\textbf{1}_{\{t < \tau_j \}} \right]
     + \mathbb{E}\left[Ke^{-c_f R_1} |Z_{T_k}^{\tau_j}| \textbf{1}_{\{|Z_{T_k}^{\tau_j}| \leq R, \, |Z_t^{\tau_j}| > R_1 \}} \textbf{1}_{\{t < \tau_j \}}\right]
    \end{aligned}
\end{equation}

In principle, the idea is to use \eqref{eq:firstpart} to control the terms arising from \eqref{eq:secondpart}. Hence, one may get
\begin{equation*}
    \begin{aligned}
       &I_2(t)
        \leq \mathbb{E}\left[\left(-\frac{1}{2} \sqrt{2\beta^{-1}} c_f e^{-c_f |Z_t^{\tau_j}|} + K e^{-c_f |Z_t^{\tau_j}|} |Z_{T_k}^{\tau_j}| \right) \textbf{1}_{\{|Z_{T_k}^{\tau_j}| \leq R, |Z_t^{\tau_j}| \leq R_1 \}}\textbf{1}_{\{t < \tau_j \}} \right]\\
        & + e^{-c_f R_1}\left( - \frac{1}{2} \sqrt{2\beta^{-1}} c_f \, \mathbb{P}\left(|Z_t^{\tau_j}| \leq R_1, |Z_{T_k}^{\tau_j}| \leq R ,t<\tau_j\right)
        +KR\, \mathbb{P}\left(|Z_{T_k}^{\tau_j}| \leq R, |Z_t^{\tau_j}| > R_1,t<\tau_j\right)\right)\\
        &=: J_1(t)+J_2(t).
    \end{aligned}
\end{equation*}

Direct estimate yields:
\begin{equation}\label{J_1estimate}
\begin{aligned}
    J_1(t) &\leq \mathbb{E}\left[-e^{-c_f |Z_t^{\tau_j}|} \left( \frac{1}{2}\sqrt{2\beta^{-1}}c_f R^{-1} - K\right) |Z_{T_k}^{\tau_j}| \textbf{1}_{\{|Z_{T_k}^{\tau_j}| \leq R, |Z_t^{\tau_j}| \leq R_1 \}}\textbf{1}_{\{t < \tau_j \}}\right]\le 0,
\end{aligned}
\end{equation}
if 
\[
\frac{1}{2}\sqrt{2\beta^{-1}} c_f R^{-1} - K \ge 0.
\]
Here we need to choose sufficiently large coefficient $c_f$ in the definition of $f$ such that $\frac{1}{2}\sqrt{2\beta^{-1}} c_f R^{-1} - K \ge 0$.

To handle the remaining term $J_2(t)$, we first observe that
\begin{equation}\label{eq:J2_1}
\begin{aligned}
    J_2(t) &\leq -\frac{1}{2}\sqrt{2\beta^{-1}}c_f e^{-c_f R_1}R_1^{-1}\mathbb{E}\left[|Z_t^{\tau_j}| \textbf{1}_{\{ |Z_t^{\tau_j}| \leq R_1, |Z_{T_k}^{\tau_j}| \leq R ,t<\tau_j\}}\right]\\
    &\quad + KRR_1^{-1}\mathbb{E}\left[|Z_{t}^{\tau_j}| \textbf{1}_{\{|Z_t^{\tau_j}| > R_1, |Z_{T_k}^{\tau_j}| \leq R, t<\tau_j\}}\right]\\
    &= -\frac{1}{2}\sqrt{2\beta^{-1}}c_f e^{-c_f R_1}R_1^{-1}\mathbb{E}\left[|Z_t^{\tau_j}| \textbf{1}_{\{  |Z_{T_k}^{\tau_j}| \leq R ,t<\tau_j\}}\right]\\
    &\quad + \left(\frac{1}{2}\sqrt{2\beta^{-1}}c_f e^{-c_f R_1}R_1^{-1} +  KRR_1^{-1} \right) \mathbb{E}\left[|Z_{t}^{\tau_j}| \textbf{1}_{\{|Z_t^{\tau_j}| > R_1, |Z_{T_k}^{\tau_j}| \leq R, t<\tau_j\}}\right]
\end{aligned}
\end{equation}
Based on tail estimate in Lemma \ref{lmm:zeta}, we prove in Lemma \ref{lmm:lmm2}, for small $\eta_k$ and large $j$,
\begin{equation}\label{eq:secondclaim}
    \mathbb{E}\left[|Z_{t}^{\tau_j}| \textbf{1}_{\{|Z_t^{\tau_j}| > R_1, |Z_{T_k}^{\tau_j}| \leq R, t<\tau_j\}}\right] \leq \varepsilon(\eta_k) \mathbb{E}\left[|Z_{t}^{\tau_j}| \textbf{1}_{\{ |Z_{T_k}^{\tau_j}| \leq R, t<\tau_j\}}\right],
\end{equation}
where
\begin{equation}
    \varepsilon(\eta) := \frac{15R_1 e^{-\bar{c}\beta(R_1 - 3R/2)^2 \eta^{-1}/ 8}}{R}.
\end{equation}
Therefore, for the conditions given, 
\begin{equation}\label{eq:J_2estimate}
    J_2(t) \leq -\left(\frac{1}{2}\sqrt{2\beta^{-1}}c_f e^{-c_f R_1}R_1^{-1} - \tilde{\varepsilon}(\eta_k) \right)\mathbb{E}\left[|Z_t^{\tau_j}| \textbf{1}_{\{|Z_{T_k}^{\tau_j}| \leq R \}} \textbf{1}_{\{t < \tau_j \}}\right].
\end{equation}
where
\begin{equation*}
    \tilde{\varepsilon}(\eta) := \left(\frac{1}{2}\sqrt{2\beta^{-1}}c_f e^{-c_f R_1}R_1^{-1} +  KRR_1^{-1} \right)\frac{15R_1 e^{-\bar{c}\beta(R_1 - 3R/2)^2 \eta^{-1}/ 8}}{R}.
\end{equation*}

Hence, for $t \in [T_k,T_{k+1})$, one is able to conclude from \eqref{eq:I3final},   \eqref{eq:J_2estimate} that
\begin{equation*}
\begin{aligned}
    \frac{d}{dt}\mathbb{E}\left[f(|Z_t^{\tau_j}|)\right]
    \leq -c(k)\mathbb{E}\left[|Z_{t}^{\tau_j}|\textbf{1}_{\{t < \tau_j \}}\right]\leq -c(k)\mathbb{E}\left[f(|Z_{t}^{\tau_j}|)\textbf{1}_{\{t < \tau_j \}}\right],
\end{aligned}
\end{equation*}
where 
\begin{equation*}
c(k) := \min\left(  \frac{1}{2}\sqrt{2\beta^{-1}}c_f e^{-c_f R_1}R_1^{-1} - \tilde{\varepsilon}(\eta_k) ,\frac{1}{2}\left(e^{-c_f R_1}\kappa - c' \eta_k^{\frac{1}{2}}|\log \eta_k|^{\frac{1}{2}}-3e^{-\bar{c}\beta R^2 \eta_k^{-1}/128} \right) \right).
\end{equation*}

Letting $j \rightarrow + \infty$, since $\tau_j \to \tau$  by Lemma \ref{lmm:tauj} in Section \ref{sec:proplmm} and the moment control in \eqref{eq:Zcontrol} (recall that $T_{k+1}\le T$ and $Z_t=Z_{t}^{\tau}$), one has by dominated convergence theorem that
\begin{equation*}
\mathbb{E}\left[f(|Z_t|)\right]
    \leq \mathbb{E}\left[f(|Z_{T_k}|)\right]-c(k)\int_{T_k}^t \mathbb{E}\left[f(|Z_{s}|)\textbf{1}_{\{s < \tau \}}\right]\,ds.
\end{equation*}
Since $Z_t \equiv 0$ for $t \geq \tau$, 
\[
\mathbb{E}\left[f(|Z_{t}|)\textbf{1}_{\{t < \tau \}}\right]=\mathbb{E}\left[f(|Z_{t}|)\right].
\]
By Gr\"onwall's inequality, one has
\begin{equation*}
    \mathbb{E}\left[f(|Z_{T_{k+1}}|)\right]  \leq e^{-c(k)\eta_k}\mathbb{E}\left[f(|Z_{T_{k}}|)\right].
\end{equation*}
Define $h := \sup_k \eta_k$.
Choosing small $h$ as stated in Theorem \ref{thm:contraction}, we are able to
conclude
\begin{equation}
 \mathbb{E}\left[f(|Z_{T_{k}}|)\right]
 \le e^{-c T_k} \mathbb{E}\left[f(|Z_0|)\right] =e^{-cT_k}W_f(\mu_0,\nu_0),
\end{equation}
where
\begin{equation}
    c = \frac{1}{3}e^{-c_f R_1}\min\left(\sqrt{2\beta^{-1}} c_f R_1^{-1}, \kappa \right).
\end{equation}
Note that this resulted inequality is independent of $T$. Since $T$ is arbitrary, this holds for all $k\ge 0$. So one eventually has
\begin{equation}\label{eq:contractionoT}
    W_f(\mu_{T_k},\nu_{T_k}) \leq e^{-cT_k}W_f(\mu_0,\nu_0), \quad \forall k \in \mathbb{N}.
\end{equation}
Above, $\mu_0$ and $\nu_0$ denote any two initial distributions, and $\mu_t$ and $\nu_t$ are the corresponding time marginal distributions for the time continuous interpolation of SGLD algorithm \eqref{eq:sgld}.   Moreover, since $e^{-c_f R_1} r \leq f(r) \leq r$ for any $r \geq 0$, we have
\begin{equation}
    W_1(\mu_{T_k}, \nu_{T_k}) \leq c_0 e^{-cT_k}W_1(\mu_0,\nu_0), \quad \forall k \in \mathbb{N}.
\end{equation}
with $c_0 := e^{c_f R_1}$. This then ends the proof by choosing $R_1 = 2R$.
\end{proof}

\subsection{Propositions and Lemmas used in the proof of Theorem \ref{thm:contraction}}\label{sec:proplmm}

In the following, we present crucial estimations used in the proof above: Lemma \ref{lmm:zeta}, Corollary \ref{lmm:abs_zeta}, Lemma \ref{lmm:smallprobcontrol}, Lemma \ref{lmm:lmm2}, Proposition \ref{pro:rbandconv} and Lemma \ref{lmm:tauj}. As can be observed in the proof of Theorem \ref{thm:contraction}, the main issue to solve due to the existence of numerical discretization is that, one needs to estimate how far $Z_t$ moves during the time interval $[T_k,t]$ for $t \in [T_k,T_{k+1})$. This then motivates us to estimate the diffusion part $\zeta_t^{\tau_j}$ defined in \eqref{eq:zetaj_def} first.

In the next Lemma, we estimate the martingale $\zeta_t^{\tau_j}$ defined in \eqref{eq:zetaj_def}:
\begin{equation*}
    \zeta_t^{\tau_j} := \int_{{T_k} \wedge \tau_j}^{t \wedge \tau_j} \frac{Z_s^{\otimes 2}}{|Z_s|^2} \cdot dW_s.
\end{equation*}
Note that although the diffusion part $\hat{W}_t$ in \eqref{eq:reflectioncoupling} or Lemma \ref{lmm:verifyBM} is a Brownian motion, it correlates with the original Brownian motion $W_t$. Therefore, in \eqref{eq:Ztevolve}, $\zeta_t = \int_{T_k}^t d\hat{W}_s - dW_s$ is a multiplicative noise.  However, since the diffusion coefficient $\frac{Z_t^{\otimes 2}}{|Z_t|^2}$ has unit norm, we are able to give the following tail estimate for $\zeta_t^{\tau_j}$ using the Burkholder-Davis-Gundy (BDG) inequality.

\begin{lemma}\label{lmm:zeta}
Recall the definition for $\zeta_s^{\tau_j}$ in \eqref{eq:zetaj_def}. For any $j \in \mathbb{N}_{+}$, for fixed $t \in [T_k, T_{k+1})$, the random variable $\sup_{T_k\le s\le t}|\zeta_s^{\tau_j}|$ is subgaussian in the sense that
\begin{equation}\label{eq:subguassianprop}
    \mathbb{P}\left(\sup_{T_k \leq s \leq t}|\zeta_s^{\tau_j}| > a\Big| \mathcal{F}_{T_k}\right) \leq 2 e^{-\bar{c}\eta_k^{-1}a^2}, \quad \forall a > 0,
\end{equation}
where the $\sigma$-algebra $\mathcal{F}_{T_{k}}$ is defined by $\mathcal{F}_{T_{k}} := \sigma(\bar{X}_s, \bar{Y}_s; s \leq T_k)$, and $\bar{c}$ is a positive constant independent of $t$, $k$, $\xi$  and $a$. Consequently, 
\begin{equation}
    \mathbb{P}\left(\sup_{T_k \leq s \leq t}|\zeta_s^{\tau_j}| > \bar{c}^{-\frac{1}{2}}\eta_k^{\frac{1}{2}}|\log \eta_k|^{\frac{1}{2}}\Big| \mathcal{F}_{T_k} \right) \leq 2 \eta_k  \rightarrow 0 \quad as \quad \eta_k \rightarrow 0.
\end{equation}
\end{lemma}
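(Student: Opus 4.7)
The plan is to derive the subgaussian tail by a standard exponential supermartingale argument coupled with Doob's maximal inequality, adapted to the degenerate vector-valued stochastic integral $\zeta_s^{\tau_j}$. The key observation is that for $s \in [T_k, T_{k+1})$, $\zeta_s^{\tau_j}$ is a continuous $\mathbb{R}^d$-valued $(\mathcal{F}_s)$-martingale that starts from $0$ at $s = T_k$, with matrix quadratic covariation $\langle \zeta^{\tau_j}\rangle_s = \int_{T_k\wedge\tau_j}^{s\wedge\tau_j} e_u \otimes e_u\, du$; since each $|e_u|=1$, its trace is bounded by $s - T_k \le \eta_k$, and $v^\top \langle \zeta^{\tau_j}\rangle_s v \le \eta_k$ for any unit vector $v$.

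Fixing $\lambda > 0$, I consider $F(x) := \cosh(\lambda |x|)$. A crucial point is that $F \in C^\infty(\mathbb{R}^d)$ (including across $x = 0$) thanks to the entire expansion $\cosh(\lambda|x|) = \sum_{n \ge 0} \lambda^{2n}|x|^{2n}/(2n)!$ as a power series in $|x|^2$. Applying It\^o's formula to $F(\zeta_s^{\tau_j})$ along $d\zeta_s^{\tau_j} = (e_s \otimes e_s)\,dW_s$ and using the identities $(e\otimes e)(e \otimes e)^\top = e \otimes e$, $|e_s \cdot \zeta_s^{\tau_j}|/|\zeta_s^{\tau_j}| \le 1$, together with the elementary inequality $\sinh(z)/z \le \cosh(z)$ for $z \ge 0$, the It\^o drift becomes a convex combination bounded above by $\tfrac{\lambda^2}{2} F(\zeta_s^{\tau_j})$. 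Therefore
\[
M_s^\lambda := e^{-\lambda^2 (s - T_k)/2}\, F(\zeta_s^{\tau_j}), \qquad s \in [T_k, T_{k+1}),
\]
is a nonnegative $(\mathcal{F}_s)$-supermartingale (conditional on $\mathcal{F}_{T_k}$) with $M_{T_k}^\lambda = F(0) = 1$.

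Doob's maximal inequality applied to $M^\lambda$, combined with the monotonicity of $\cosh$ on $[0,\infty)$ and the bound $\cosh(\lambda a) \ge \tfrac{1}{2} e^{\lambda a}$, then gives
\[
\mathbb{P}\!\left(\sup_{T_k\le s\le t}|\zeta_s^{\tau_j}| \ge a\;\Big|\;\mathcal{F}_{T_k}\right) \le 2\exp\!\left(\tfrac{\lambda^2(t-T_k)}{2} - \lambda a\right).
\]
Optimising in $\lambda > 0$ via $\lambda = a/(t - T_k)$ and using $t - T_k \le \eta_k$ yields the claim with $\bar c = 1/2$, a constant that is independent of $t$, $k$, $\xi$, and even of the dimension $d$. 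The second assertion is immediate by substituting $a = \bar{c}^{-1/2}\eta_k^{1/2}|\log \eta_k|^{1/2}$, which turns the exponential factor into $\eta_k$. The principal technical hurdle I anticipate is performing the It\^o calculation at the origin, where $\zeta_{T_k}^{\tau_j} = 0$ sits and $|x|$ fails to be differentiable; this is bypassed cleanly by the global smoothness of $F$ noted above, or alternatively by localising on $\{|\zeta_s^{\tau_j}| > \varepsilon\}$ and letting $\varepsilon \downarrow 0$ using the fact that $\zeta^{\tau_j}$ is continuous and the bound is uniform.
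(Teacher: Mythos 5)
Your proof is correct, and it takes a genuinely different route from the paper's. The paper establishes the subgaussian bound by verifying the $\psi_2$-condition $\mathbb{E}[e^{\alpha|\theta_t^{\tau_j}|^2}\mid\mathcal{F}_{T_k}]\le 2$: it expands the exponential in a power series, bounds each moment $\mathbb{E}[|\theta_t^{\tau_j}|^{2p}]$ by the BDG inequality via $\langle\zeta^{\tau_j}\rangle\le\eta_k$, controls the BDG constants with $C_{2p}\le(C\sqrt{2p})^{2p}$, resums using a Stirling-type bound, and finally applies Chernoff. You instead construct the exponential supermartingale $M_s^\lambda = e^{-\lambda^2(s-T_k)/2}\cosh(\lambda|\zeta_s^{\tau_j}|)$, which is nonnegative with $M_{T_k}^\lambda=1$; the It\^o drift is dominated by $\tfrac{\lambda^2}{2}F$ because $e_s^{\top}\nabla^2F(\zeta_s^{\tau_j})e_s$ is a $\theta$-weighted average of $\lambda^2\cosh(\lambda|x|)$ and $\lambda\sinh(\lambda|x|)/|x|$, both bounded by $\lambda^2\cosh(\lambda|x|)$. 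Doob's maximal inequality plus the optimization $\lambda=a/(t-T_k)$ then yields the tail bound. Both arguments are valid, but yours has a concrete advantage that is directly relevant to the paper: it produces the explicit, manifestly dimension-free constant $\bar c=1/2$, whereas the paper's $\bar c = \tfrac{1}{2e(1+C)}$ inherits the BDG constant $C$, whose dimension dependence the authors explicitly flag as the one unresolved obstacle to a dimension-free contraction rate (Section 3.4). Your $\cosh$-supermartingale argument removes that obstacle: by working with the one-dimensional projection $e_s^\top\nabla^2 F(\zeta_s^{\tau_j})e_s$ you exploit the rank-one structure of $e_s\otimes e_s$ directly, rather than filtering it through a vector-valued BDG inequality. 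Your handling of the origin (global smoothness of $\cosh(\lambda|x|)$ as a power series in $|x|^2$) is the right observation, and the passage from local to true supermartingale is standard since $M^\lambda$ is nonnegative with integrable initial value.
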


\begin{proof}
We prove the subgaussian property \eqref{eq:subguassianprop} via the well-known $\psi_2$-condition \cite{vershynin2018high}: there exists $\alpha > 0$ such that
\begin{equation}\label{eq:psi2}
    \mathbb{E}\left[e^{\alpha|\theta_t^{\tau_j}|^2} \Big| \mathcal{F}_{T_k}\right] \leq 2,
\end{equation}
where we denote $\theta_t^{\tau_j} := \sup_{T_k \leq s \leq t} \zeta_s^{\tau_j}$. Clearly, $\zeta_t^{\tau_j}$ of the form \eqref{eq:zetaj_def} is a martingale by optional stopping theorem \cite{durrett2018stochastic}, and its quadratic variation satisfies
$\langle \zeta_t^{\tau_j}\rangle\le t\wedge\tau_j-T_k\wedge\tau_j\le \eta_k$. Then it holds by the Burkholder-Davis-Gundy (BDG) inequality (\cite{barlow1982semi,carlen1991lp})  that
\begin{multline}\label{eq:taylorbdg}
    \mathbb{E}\left[e^{\alpha |\theta_t^{\tau_j}|^2}\Big| \mathcal{F}_{T_k}\right] = 1 + \sum_{p=1}^{+\infty} \frac{1}{p!} \alpha^p \mathbb{E}\left[|\theta_t^{\tau_j}|^{2p}\Big| \mathcal{F}_{T_k}\right]\\
    \leq 1 + \sum_{p=1}^{+\infty} \frac{1}{p!}\alpha^p C_{2p} \mathbb{E}\left[\langle \zeta^{\tau_j} \rangle_{T_{k+1}}^p\Big| \mathcal{F}_{T_k}\right] \leq 1 + \sum_{p=1}^{+\infty} \frac{1}{p!} C_{2p} \left(\eta_k \alpha\right)^p,
\end{multline}
where $\alpha$ is a positive parameter to be determined, and $C_{2p}$  is a positive constant satisfying \cite{barlow1982semi,carlen1991lp,pinelis1994optimum}:
\begin{equation}\label{claim:Cq}
    C_{2p} \le (C \sqrt{2p})^{2p},
\end{equation}
where $C$ is a positive constant related to the Hilbert space only (in our case $\mathbb{R}^d$).  Combining \eqref{eq:taylorbdg} and \eqref{claim:Cq}, we have
\begin{equation*}
    \mathbb{E}\left[e^{\alpha |\theta_t^{\tau_j}|^2}\Big| \mathcal{F}_{T_k}\right] \leq 1 + C\sum_{p=1}^{+\infty} \frac{p^p}{p!}\left(2\eta_k \alpha \right)^p.
\end{equation*}
Clearly, $\frac{p^p}{p!} \leq e^p p^{-\frac{1}{2}} \leq e^p$, which can be derived from an intermediate result in the proof of Stirling's formula \cite{rudin1976principles}: $\log p! > \left(p+\frac{1}{2}\right)\log p - p$. Therefore,
\begin{equation*}
    \mathbb{E}\left[e^{\alpha |\theta_t^{\tau_j}|^2}\Big| \mathcal{F}_{T_k}\right] \leq 1+ C \sum_{p=1}^{+\infty} \left(2e\eta_k \alpha\right)^p = 1 + C\frac{2e\eta_k \alpha}{1 - 2e\eta_k \alpha} =  2,
\end{equation*}
by choosing $\alpha = \frac{1}{2e(1+C)\eta_k } := \bar{c} \eta_k^{-1}$. Therefore, the $\psi_2$ condition \eqref{eq:psi2} holds.

Finally, using Chernoff's bound \cite{vershynin2018high}, for any $a>0$, it holds that
\begin{equation}\label{eq:chernoff}
    \mathbb{P}\left(|\theta_t^{\tau_j}| > a\Big| \mathcal{F}_{T_k}\right) \leq \mathbb{E}\left[e^{\alpha |\theta_t^{\tau_j}|^2}\Big| \mathcal{F}_{T_k}\right] / e^{\alpha a^2} \leq 2 e^{-\bar{c}\eta_k^{-1} a^2}.
\end{equation}
Consequently, taking $a = \bar{c}^{-\frac{1}{2}}\eta_k^{\frac{1}{2}}|\log \eta_k|^{\frac{1}{2}}$ gives the last claim.
\end{proof}

We will make use of the subgaussian estimate to control a series of conditional expectations. In particular, later we need the conditional expectations on events like $|Z_{T_k}^{\tau_j}| > R$
and $t<\tau_j$. If these two events are independent, there is little difficulty. The difficulty is that these two are highly correlated. Actually, we will make use of the fact that the former event almost is contained in the second one so that the estimates can carry through as well.

As a start, we prove the following conditional estimate of $|\zeta_t^{\tau_j}|$ as an illustration.
\begin{corollary}\label{lmm:abs_zeta}
Let $\eta_k \leq \min(\frac{1}{2K},  \frac{\bar{c} \beta R^2}{128\log 8})$, where $\bar{c}$ is the positive constant obtained in Lemma \ref{lmm:zeta}. Suppose that $\mathbb{P}(|Z_{T_k}^{\tau_j}| > R)>0$. Then, for $j$ large enough,  it holds that
    \begin{equation}\label{eq:lmmabs}
        \mathbb{E}\left[|\zeta_t^{\tau_j}| \Big| |Z_{T_k}^{\tau_j}| > R, t<\tau_j\right] \leq 8\sqrt{2} \sqrt{\eta_k}, \quad \forall t \in [T_k,T_{k+1}).
    \end{equation}
\end{corollary}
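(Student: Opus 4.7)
The plan is to write the conditional expectation as a ratio of expectations and estimate numerator and denominator separately by conditioning on $\mathcal{F}_{T_k}$. The first observation is that on $\{t<\tau_j\}$ one has $T_k<\tau_j$ (since $t\geq T_k$ in the current subinterval), so $Z_{T_k}^{\tau_j}=Z_{T_k}$; therefore the event in the conditioning equals $\{|Z_{T_k}|>R,\,t<\tau_j\}$, and crucially $\{|Z_{T_k}|>R\}$ is $\mathcal{F}_{T_k}$-measurable, which enables the tower property.

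For the numerator, bounding $\mathbf{1}_{t<\tau_j}\leq 1$ and using the tower property yields
\begin{equation*}
\mathbb{E}\bigl[|\zeta_t^{\tau_j}|\mathbf{1}_{|Z_{T_k}|>R}\mathbf{1}_{t<\tau_j}\bigr]\leq\mathbb{E}\bigl[\mathbf{1}_{|Z_{T_k}|>R}\,\mathbb{E}[|\zeta_t^{\tau_j}|\mid\mathcal{F}_{T_k}]\bigr].
\end{equation*}
Since $Z_s^{\otimes 2}/|Z_s|^2$ is a rank-one orthogonal projector, a direct computation shows that the sum of the quadratic variations of the components of $\zeta^{\tau_j}$ equals the elapsed time along the path, so $\mathbb{E}[|\zeta_t^{\tau_j}|^2\mid\mathcal{F}_{T_k}]\leq t\wedge\tau_j-T_k\wedge\tau_j\leq\eta_k$, giving $\mathbb{E}[|\zeta_t^{\tau_j}|\mid\mathcal{F}_{T_k}]\leq\sqrt{\eta_k}$ by Jensen. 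Hence the numerator is at most $\sqrt{\eta_k}\,\mathbb{P}(|Z_{T_k}|>R)$.

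The main obstacle is the lower bound on the denominator, because $\{t<\tau_j\}$ is not $\mathcal{F}_{T_k}$-measurable and hence cannot be directly factored out. I would partition $\{|Z_{T_k}|>R\}=\{R<|Z_{T_k}|\leq j/3\}\cup\{|Z_{T_k}|>j/3\}$; by Lemma \ref{lmm:sgldmoment}, $Z_{T_k}$ has finite moments, so $\mathbb{P}(|Z_{T_k}|>j/3)\to 0$ as $j\to\infty$, and for $j$ large enough $\mathbb{P}(R<|Z_{T_k}|\leq j/3)\geq\tfrac12\mathbb{P}(|Z_{T_k}|>R)$ using the hypothesis $\mathbb{P}(|Z_{T_k}|>R)>0$. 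On this sub-event, the representation \eqref{eq:Zttauj}, the Lipschitz bound $|A_{T_k}|\leq K|Z_{T_k}|$, and $\eta_k K\leq 1/2$ give
\begin{equation*}
|Z_s^{\tau_j}-Z_{T_k}|\leq \tfrac12|Z_{T_k}|+2\sqrt{2\beta^{-1}}\sup_{T_k\leq u\leq t}|\zeta_u^{\tau_j}|.
\end{equation*}
An exit to the inner boundary $1/j$ then forces $\sup_u|\zeta_u^{\tau_j}|\geq R\sqrt{\beta}/(8\sqrt{2})$ (valid once $j\geq 4/R$), which by Lemma \ref{lmm:zeta} has conditional probability $\leq 2\exp(-\bar{c}R^2\beta/(128\eta_k))\leq 1/4$ under the assumed $\eta_k\leq \bar{c}\beta R^2/(128\log 8)$; an exit to the outer boundary $j$ forces $\sup_u|\zeta_u^{\tau_j}|\geq j\sqrt{\beta}/(4\sqrt{2})$, which similarly has conditional probability $\leq 1/4$ for $j$ sufficiently large. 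Combining yields $\mathbb{P}(t<\tau_j\mid\mathcal{F}_{T_k})\geq 1/2$ on $\{R<|Z_{T_k}|\leq j/3\}$, so the denominator is at least $\tfrac14\mathbb{P}(|Z_{T_k}|>R)$. Dividing the numerator bound by the denominator bound produces the ratio $\leq 4\sqrt{\eta_k}\leq 8\sqrt{2}\sqrt{\eta_k}$; the coherence point is that a single $j$ large enough can simultaneously satisfy $j\geq 4/R$, make the outer-exit tail $\leq 1/4$, and make $\mathbb{P}(|Z_{T_k}|>j/3)\leq \tfrac12\mathbb{P}(|Z_{T_k}|>R)$, since each of these is a fixed threshold depending only on $R,\beta,\bar{c},\eta_k$ and the distribution of $Z_{T_k}$.
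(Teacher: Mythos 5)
Your overall architecture — write the conditional expectation as numerator over denominator, bound the numerator by dropping $\mathbf{1}_{t<\tau_j}$ and conditioning on $\mathcal{F}_{T_k}$, then lower-bound the denominator by showing exit before $t$ is conditionally unlikely — is the same as the paper's. Your numerator estimate is in fact cleaner: using the It\^o isometry (the trace of the rank-one projector is $1$) plus Jensen gives $\mathbb{E}[|\zeta_t^{\tau_j}|\mid\mathcal{F}_{T_k}]\le\sqrt{\eta_k}$ directly, whereas the paper invokes BDG with the constant $4\sqrt{2}$; both give a bound well below $8\sqrt{2}\sqrt{\eta_k}$.

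There is, however, a genuine gap in your denominator estimate. You partition on $\{R<|Z_{T_k}|\le j/3\}$ and on this event claim $\mathbb{P}(t<\tau_j\mid\mathcal{F}_{T_k})\ge\tfrac12$ by applying the representation \eqref{eq:Zttauj}. But that representation yields
\begin{equation*}
\bigl|Z_s^{\tau_j}-Z_{T_k}^{\tau_j}\bigr|\le\eta_k K\,|Z_{T_k}|+2\sqrt{2\beta^{-1}}\sup_{T_k\le u\le t}|\zeta_u^{\tau_j}|,
\end{equation*}
with $Z_{T_k}^{\tau_j}$ on the left, and $Z_{T_k}^{\tau_j}=Z_{T_k}$ only on the event $\{\tau_j>T_k\}$. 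The event $\{R<|Z_{T_k}|\le j/3\}$ does \emph{not} imply $\tau_j>T_k$: the path can dip to $j^{-1}$ before $T_k$ (so $\tau_j\le T_k$) and recover to $|Z_{T_k}|>R$ by time $T_k$. On $\{\tau_j\le T_k\}$ one has $\zeta_u^{\tau_j}\equiv 0$ for $u\ge T_k$ (the integral in \eqref{eq:zetaj_def} is empty) while $\{t<\tau_j\}$ already fails, so your claimed conditional lower bound $\tfrac12$ is $0$ there, and your conclusion $\mathbb{P}(|Z_{T_k}|>R,\,t<\tau_j)\ge\tfrac12\mathbb{P}(R<|Z_{T_k}|\le j/3)$ does not follow. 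The gap is repairable — you would need to argue in addition that $\mathbb{P}(\{R<|Z_{T_k}|\le j/3\}\cap\{\tau_j\le T_k\})\to 0$ as $j\to\infty$, which holds because $\{\tau_j\le T_k\}\downarrow\{\tau\le T_k\}\cup\{\sup_{s\le T_k}|Z_s|=\infty\}$ and $\{\tau\le T_k\}\cap\{|Z_{T_k}|>R\}=\emptyset$ — but as written the step does not go through. The paper sidesteps this entirely by phrasing the good event as $\{R<|Z_{T_k}^{\tau_j}|<j_0\}$: since $j_0<j$ and $j^{-1}<R$, this forces $|Z_{T_k}^{\tau_j}|\in(j^{-1},j)$, hence $\tau_j>T_k$ automatically, and the residual event $\{|Z_{T_k}^{\tau_j}|\ge j_0\}$ (which absorbs all of $\{\tau_j\le T_k\}\cap A$ for $j>j_0$) is killed by moment control.
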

\begin{proof}
For simplicity, we denote the events
\begin{equation*}
    A := \left\{|Z_{T_k}^{\tau_j}|>R \right\}, \quad B:=\left\{ t<\tau_j\right\}.
\end{equation*}
Our goal is then to control $\mathbb{E}[|\zeta_t^{\tau_j}|\textbf{1}_A \textbf{1}_B]/P(A\cap B)$.

 Firstly, using the BDG inequality for $p\in (0, 2)$ (see, e.g., \cite[Theorem 7.3]{mao2007stochastic}), one has
\begin{gather}
    \mathbb{E}\left[|\zeta_t^{\tau_j}| \Big|  A \right] 
    \leq 4\sqrt{2} \mathbb{E}\left[\mathbb{E}\left[\langle\zeta^{\tau_j}\rangle_{T_k}^{\frac{1}{2}} \Big| \mathcal{F}_{T_k} \right]\Big| A \right] \leq 4\sqrt{2}\sqrt{\eta_k}.
\end{gather}

Next, we estimate $\mathbb{P}(B^c| A)$.
By Markov inequality  and the moment control in Lemma \ref{lmm:sgldmoment}, 
\begin{equation*}
    \mathbb{P}(|Z_{T_k}^{\tau_j}| \ge j_0) \leq \frac{\mathbb{E}|Z_{T_k}^{\tau_j}|^2}{j_0^2} \rightarrow 0 \quad \text{as} \quad j_0 \rightarrow \infty.
\end{equation*}
Hence, for $j_0$ large enough (independent of $j$), 
\begin{equation*}
    \mathbb{P}(|Z_{T_k}^{\tau_j}| \ge j_0) \le \frac{1}{4} \mathbb{P}(A).
\end{equation*}
Clearly,
\[
\mathbb{P}(B^c| A)\le \frac{\mathbb{P}(|Z_{T_k}^{\tau_j}| \ge j_0)}{\mathbb{P}(A)}+\frac{\mathbb{P}(B^c \cap\{R<|Z_{T_k}^{\tau_j}|<j_0\})}{\mathbb{P}(\{R<|Z_{T_k}^{\tau_j}|<j_0\})}.
\]
Now, since
\begin{equation*}
\begin{aligned}
 \left||Z_s^{\tau_j}| - |Z_{T_k}^{\tau_j}|\right| &\leq  \eta_k K |Z_{T_k}^{\tau_j}| + 2\sqrt{2\beta^{-1}}|\zeta_s^{\tau_j}|,
\end{aligned}
\end{equation*}
then $B^c$ could happen only if 
\[
2\sqrt{2\beta^{-1}}\sup_{T_k \le s\le t}|\zeta_s^{\tau_j}|
\ge \max\left\{j-\frac{3}{2}|Z_{T_k}^{\tau_j}|, \frac{1}{2}|Z_{T_k}^{\tau_j}|-j^{-1}\right\}.
\]
Taking $j$ with $j>\frac{3}{2}j_0+R/4$ and $j^{-1}<R/4$, then
\[
\frac{\mathbb{P}(B^c \cap\{R<|Z_{T_k}^{\tau_j}|<j_0\})}{\mathbb{P}(\{R<|Z_{T_k}^{\tau_j}|<j_0\})}
\le \mathbb{P}\left(2\sqrt{2\beta^{-1}}\sup_{T_k \le s\le t}|\zeta_s^{\tau_j}|>R/4 | \mathcal{F}_{T_k}\right).
\]
By Lemma \ref{lmm:zeta}, one obtains that 
\[
\mathbb{P}(2\sqrt{2\beta^{-1}}\sup_{T_k \le s\le t}|\zeta_s^{\tau_j}|>R/4 | \mathcal{F}_{T_k})\leq 2e^{-\bar{c}\beta R^2 \eta_k^{-1}/128}.
\]
Hence, for $\eta_k \leq \frac{\bar{c} \beta  R^2}{128\log 8}$, one has
\begin{equation}
    \mathbb{P}(B^C | A) < \frac{1}{2}.
\end{equation}
Consequently,
\begin{equation*}
     \frac{\mathbb{E}\left[|\zeta_t^{\tau_j}|\textbf{1}_A \textbf{1}_B\right]}{\mathbb{P}(A\cap B)} \leq  \frac{\mathbb{E}\left[|\zeta_t^{\tau_j}|\textbf{1}_A \right]}{\mathbb{P}(A) - \mathbb{P}(B^C \cap A)} 
     =\frac{\mathbb{E}\left[|\zeta_t^{\tau_j}| | A \right]}{1 - \mathbb{P}(B^C | A)} 
     \le 2\mathbb{E}\left[|\zeta_t^{\tau_j}| \Big|  A \right],
\end{equation*}
and the claim then follows.
\end{proof}

Next, we will make use the same idea to establish a series of conditional expectations, which is based on the tail estimate in Lemma \ref{lmm:zeta}.
\begin{lemma}\label{lmm:smallprobcontrol}
Let $\eta_k \in (0,1/2K)$. Then, for $j \ge j_0+R/4$, it holds that
\begin{equation}\label{eq:epsilonj}
    \mathbb{E}\left[|Z_{T_k}^{\tau_j}| \textbf{1}_{\{|Z_{T_k}^{\tau_j}| > R \}} \textbf{1}_{\{t \geq \tau_j \}}\right] \leq (\epsilon(j_0)+ 
 2e^{-\bar{c}\beta R^2 \eta_k^{-1}/128} )\mathbb{E}\left[|Z_{T_k}^{\tau_j}|\textbf{1}_{\{|Z_{T_k}^{\tau_j}| > R \}}\right],
\end{equation}
where $\epsilon(j_0)\to 0$ as $j_0\to\infty$. 
\end{lemma}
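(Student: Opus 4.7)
The approach refines the split of Corollary \ref{lmm:abs_zeta} by decomposing both the starting-value event $A=\{|Z_{T_k}^{\tau_j}|>R\}$ and the escape event $B=\{t\geq\tau_j\}$. Partition $A=A_1\cup A_2$ with $A_1=\{|Z_{T_k}^{\tau_j}|\geq j_0\}$, $A_2=\{R<|Z_{T_k}^{\tau_j}|<j_0\}$, and $B=B_-\cup B_+$ according to whether the escape is downward ($|Z_{\tau_j}|=j^{-1}$) or upward ($|Z_{\tau_j}|=j$). This yields
\begin{equation*}
\mathbb{E}\bigl[|Z_{T_k}^{\tau_j}|\mathbf{1}_A\mathbf{1}_B\bigr]\leq\mathbb{E}\bigl[|Z_{T_k}^{\tau_j}|\mathbf{1}_{A_1}\bigr]+\mathbb{E}\bigl[|Z_{T_k}^{\tau_j}|\mathbf{1}_{A_2}\mathbf{1}_{B_-}\bigr]+\mathbb{E}\bigl[|Z_{T_k}^{\tau_j}|\mathbf{1}_{A_2}\mathbf{1}_{B_+}\bigr].
\end{equation*}
The middle piece will produce the exponential factor $2e^{-\bar{c}\beta R^2\eta_k^{-1}/128}$, while the two outer pieces will be absorbed into $\epsilon(j_0)$ via moment control.

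For the middle piece I would condition on $\mathcal{F}_{T_k}$, noting that $A_2\in\mathcal{F}_{T_k}$ since $\tau_j$ is a stopping time. From \eqref{eq:Zttauj}, the Lipschitz bound $|A_{T_k}|\leq K|Z_{T_k}^{\tau_j}|$, and $\eta_k\leq 1/(2K)$, one obtains $|Z_s^{\tau_j}|\geq\tfrac12|Z_{T_k}^{\tau_j}|-2\sqrt{2\beta^{-1}}\sup_{T_k\leq u\leq t}|\zeta_u^{\tau_j}|$. On $A_2\cap B_-$ one has $|Z_{\tau_j}|=j^{-1}$ and $|Z_{T_k}^{\tau_j}|>R$; for $j_0$ large enough that $j\geq j_0+R/4$ implies $j^{-1}<R/4$ (possible since $R\geq 2$), this forces $\sup_{T_k\leq u\leq t}|\zeta_u^{\tau_j}|\geq R\sqrt{\beta}/(8\sqrt{2})$. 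Lemma \ref{lmm:zeta} then gives $\mathbb{P}(B_-\mid\mathcal{F}_{T_k})\leq 2e^{-\bar{c}\beta R^2\eta_k^{-1}/128}$ on $A_2$, whence $\mathbb{E}[|Z_{T_k}^{\tau_j}|\mathbf{1}_{A_2}\mathbf{1}_{B_-}]\leq 2e^{-\bar{c}\beta R^2\eta_k^{-1}/128}\mathbb{E}[|Z_{T_k}^{\tau_j}|\mathbf{1}_A]$.

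For the outer pieces I would use Lemma \ref{lmm:sgldmoment}(a). Since $T_k\wedge\tau_j\leq T_k$, $|Z_{T_k}^{\tau_j}|$ is always attained by the path of $|Z_{\cdot}|$ on $[0,T_k]$ (even in the $\{\tau_j\leq T_k\}$ branch, where $|Z_{T_k}^{\tau_j}|\in\{j^{-1},j\}$), so $A_1\subset\{\sup_{s\leq T_k}|Z_s|\geq j_0\}$; likewise on $B_+$ the process reaches height $j\geq j_0$ by time $t$, so $B_+\subset\{\sup_{s\leq T_{k+1}}|Z_s|\geq j_0\}$. Both outer terms are therefore dominated by $M(j_0):=\mathbb{E}[(\sup_{s\leq T_{k+1}}|Z_s|)\mathbf{1}_{\{\sup_{s\leq T_{k+1}}|Z_s|\geq j_0\}}]$, which tends to zero as $j_0\to\infty$ by dominated convergence together with the finite second-moment bound. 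Setting $\epsilon(j_0):=2M(j_0)/\mathbb{E}[|Z_{T_k}^{\tau_j}|\mathbf{1}_A]$ (with the claim trivial when the denominator vanishes) completes the proof. The main subtlety is the splitting of $B$: trying to handle the upward escape through the subgaussian bound would demand the strictly stronger condition $j\geq(3/2)j_0+R/4$, since one would have to overcome the full spread $j-|Z_{T_k}^{\tau_j}|$; routing the upward excursion through the moment tail instead preserves the stated hypothesis and the constant $128$ in the exponent.
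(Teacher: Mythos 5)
Your proof is correct, and it takes a genuinely different --- and in fact more careful --- route than the paper's. The paper bounds the entire escape event $\{t\geq\tau_j\}$ in one shot by the subgaussian tail, exactly as in Corollary \ref{lmm:abs_zeta}: on $\{R<|Z_{T_k}^{\tau_j}|\leq j_0\}$ it claims that the escape forces $2\sqrt{2\beta^{-1}}\sup_{T_k\le s\le t}|\zeta_s^{\tau_j}|\geq R/4$. But that step needs \emph{both} the downward threshold $\tfrac12|Z_{T_k}^{\tau_j}|-j^{-1}\geq R/4$ \emph{and} the upward threshold $j-\tfrac32|Z_{T_k}^{\tau_j}|\geq R/4$ to hold; with $|Z_{T_k}^{\tau_j}|$ allowed to range up to $j_0$, the latter forces $j\geq\tfrac32 j_0+R/4$ (precisely the condition used in Corollary \ref{lmm:abs_zeta}), not the lemma's stated hypothesis $j\geq j_0+R/4$. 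So the paper's argument, as written, does not cover the stated range of $j$. Your additional decomposition of the escape event into $B_-\cup B_+$ resolves this cleanly: the downward crossing is handled by the subgaussian bound (which needs only $j^{-1}<R/4$, i.e.\ a lower bound on $j$ independent of $j_0$), while the upward crossing to level $j\geq j_0$ is absorbed into the moment-tail term via $\sup_{s\leq T_{k+1}}|Z_s|\geq j_0$. This both preserves the constant $128$ in the exponent and validates the lemma under its stated hypothesis $j\geq j_0+R/4$, which the paper's own argument does not quite achieve, and you explicitly flag the discrepancy. One minor remark: your $\epsilon(j_0)$, like the paper's, implicitly depends on $j$ through the normalizing denominator $\mathbb{E}[|Z_{T_k}^{\tau_j}|\mathbf{1}_A]$; this is harmless in context because the lemma is only invoked ``for $j$ sufficiently large'' in Proposition \ref{pro:rbandconv}, but it is worth being aware that the ``uniformly in $j$'' claim requires that normalizer to be bounded away from zero along the sequence of $j$ considered.
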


\begin{proof}
Clearly, \eqref{eq:epsilonj} is trivial if $\mathbb{P}(|Z_{T_k}^{\tau_j}| > R)=0$. Below, we assume that
$\mathbb{P}(|Z_{T_k}^{\tau_j}| > R)>0$.

On one hand, using the result for moment control (Lemma \ref{lmm:sgldmoment}), 
\begin{equation}\label{eq::338}
\mathbb{E}\left[|Z_{T_k}^{\tau_j}|\textbf{1}_{\{|Z_{T_k}^{\tau_j}| > j_0 \}}\textbf{1}_{\{t \geq \tau_j \}}\right] \leq \mathbb{E}\left[|Z_{T_k}^{\tau_j}|\textbf{1}_{\{|Z_{T_k}^{\tau_j}| > j_0 \}}\right] \leq \epsilon(j_0) \mathbb{E}\left[|Z_{T_k}^{\tau_j}|\textbf{1}_{\{|Z_{T_k}^{\tau_j}| > R \}}\right],
\end{equation}
where $\epsilon_0(j_0)\to 0$ uniformly in $j$ as $j_0\to\infty$.

Fix $j_0$ and let $|Z_{T_k}^{\tau_j}|=z\in (R, j_0]$. By similar discussion as in the proof of Corollary \ref{lmm:abs_zeta}, since $\eta_k K\le 1/2$, for $j\ge j_0+R/4$ and $j^{-1}\le R/4$, for $\tau_j\le t$, one necessarily needs
$2\sqrt{2\beta^{-1}}\sup_{T_k\le s\le t}|\zeta_t^{\tau_j}|\ge R/4$. Hence, for such $j$, one has
\begin{equation}
    \mathbb{P}\left(t \geq \tau_j | |Z_{T_k}^{\tau_j}|=z \right) \leq 2e^{-\bar{c}\beta 
 R^2 \eta_k^{-1}/ 128}.
\end{equation}
Hence, letting $\mu_{T_k}(dz)$ be the law of $|Z_{T_k}^{\tau_j}|$, one has
\begin{multline}\label{eq::340}
\mathbb{E}\left[|Z_{T_k}^{\tau_j}|\textbf{1}_{\{R<|Z_{T_k}^{\tau_j}| \leq j_0 \}}\textbf{1}_{\{t \geq \tau_j \}}\right] = \int_R^{j_0} z \mathbb{E}\left[ \textbf{1}_{\{t \geq \tau_j \}}\Big| |Z_{T_k}^{\tau_j}| = z \right] \mu_{T_k}(dz)\\
    \leq 2e^{-\bar{c}\beta R^2 \eta_k^{-1}/128} \int_R^{j_0}  z \mu_{T_k}(dz) \le 2e^{-\bar{c} \beta R^2 \eta_k^{-1}/128} \mathbb{E}\left[|Z_{T_k}^{\tau_j}|\textbf{1}_{\{|Z_{T_k}^{\tau_j}| > R \}}\right].
\end{multline}
Combining \eqref{eq::338} and \eqref{eq::340}, the claim \eqref{eq:epsilonj} holds.
\end{proof}

The following result follows from the same idea as above, but more involved. It tells us that the two random variables $|Z_t^{\tau_j}|$ and $|Z_{T_k}^{\tau_j}|$ are roughly the same. Note that this result is also based on the tail estimate in Lemma \ref{lmm:zeta}.
\begin{lemma}\label{lmm:lmm2}
Under Assumption \ref{ass}, for any $R_1 > 3R/2$ with $R$ obtained in Lemma \ref{lmm:ass}, and $\eta_k < \min\left(\frac{1}{2K}, \frac{1}{16}\beta\bar{c}R_1(R_1-3R/2), \beta\bar{c}R^2/(128 \log 5)\right)$, there exists $j_0 > R_1$ such that for all   $j > j_0$, 
\begin{equation}\label{eq:secondclaim}
    \mathbb{E}\left[|Z_{t}^{\tau_j}| \textbf{1}_{\{|Z_t^{\tau_j}| > R_1, |Z_{T_k}^{\tau_j}| \leq R, t<\tau_j\}}\right] \leq \varepsilon(\eta_k) \mathbb{E}\left[|Z_{t}^{\tau_j}| \textbf{1}_{\{ |Z_{T_k}^{\tau_j}| \leq R, t<\tau_j\}}\right],
\end{equation}
where
\begin{equation}
    \varepsilon(\eta) := \frac{15R_1 e^{-\bar{c}\beta(R_1 - 3R/2)^2 \eta^{-1}/ 8}}{R}.
\end{equation}
\end{lemma}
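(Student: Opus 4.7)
I would model the proof on Corollary \ref{lmm:abs_zeta} and Lemma \ref{lmm:smallprobcontrol}: use the explicit representation \eqref{eq:Zttauj} to pin down a deterministic event on the martingale $\zeta_t^{\tau_j}$ that must hold whenever both $|Z_{T_k}^{\tau_j}|\le R$ and $|Z_t^{\tau_j}|>R_1$ occur simultaneously, and integrate it against the subgaussian tail of Lemma \ref{lmm:zeta}. Write $A=\{|Z_{T_k}^{\tau_j}|\le R\}$, $B=\{t<\tau_j\}$, $C=\{|Z_t^{\tau_j}|>R_1\}$. Since $\eta_k K\le 1/2$, the representation \eqref{eq:Zttauj} and the Lipschitz property in Assumption \ref{ass}(b) give on $A$ the deterministic upper bound $|Z_t^{\tau_j}|\le \tfrac{3R}{2}+2\sqrt{2\beta^{-1}}|\zeta_t^{\tau_j}|$. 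Combined with $|Z_t^{\tau_j}|>R_1>3R/2$ on $C$, this forces $|\zeta_t^{\tau_j}|>a^{*}:=\sqrt{\beta}(R_1-3R/2)/(2\sqrt{2})$ on $A\cap C$, and rearranging yields the sharpening $|Z_t^{\tau_j}|\le 2M\sqrt{2\beta^{-1}}|\zeta_t^{\tau_j}|$ with $M:=R_1/(R_1-3R/2)$.

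Integrating this last inequality, conditioning on $\mathcal{F}_{T_k}$ (so that $\mathbf{1}_A$ is measurable), and applying the layer-cake formula with Lemma \ref{lmm:zeta}, the inner conditional expectation $\mathbb{E}[|\zeta_t^{\tau_j}|\mathbf{1}_{\{|\zeta_t^{\tau_j}|>a^{*}\}}\mid\mathcal{F}_{T_k}]\le (2a^{*}+\eta_k/(\bar{c} a^{*}))e^{-\bar{c}(a^{*})^2/\eta_k}$ is controlled. The algebraic identities $\sqrt{2\beta^{-1}}a^{*}=(R_1-3R/2)/2$ and $M(R_1-3R/2)=R_1$ collapse the prefactors to give
\[
\mathbb{E}[|Z_t^{\tau_j}|\mathbf{1}_{A\cap B\cap C}]\le C_1 R_1\, e^{-\bar{c}\beta(R_1-3R/2)^2/(8\eta_k)}\,\mathbb{P}(A),
\]
for an absolute constant $C_1$ of order $3$ when $\eta_k$ is small. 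Running the same argument with $|Z_{T_k}^{\tau_j}|=z\in(j^{-1},R]$ fixed produces the sharper pointwise version $\mathbb{E}[|Z_t^{\tau_j}|\mathbf{1}_{B\cap C}\mid\mathcal{F}_{T_k}]\le C_1 R_1\, e^{-\bar{c}\beta(R_1-3z/2)^2/(8\eta_k)}$, whose exponent is strictly larger in magnitude for $z<R$.

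The main obstacle is the matching lower bound on the denominator $\mathbb{E}[|Z_t^{\tau_j}|\mathbf{1}_{A\cap B}]$, because the distribution of $|Z_{T_k}^{\tau_j}|$ on $A$ could in principle concentrate near zero. I would use the mirror lower bound $|Z_t^{\tau_j}|\ge z/2-2\sqrt{2\beta^{-1}}|\zeta_t^{\tau_j}|$ together with the good event $\mathcal{G}_z:=\{\sup_{s\in[T_k,t]}|\zeta_s^{\tau_j}|\le z\sqrt{\beta}/(8\sqrt{2})\}$. On $\mathcal{G}_z$ the upper bound from Step~1 gives $|Z_s^{\tau_j}|\in[z/4,7R/4]$ throughout $[T_k,t]$, so for $j>j_0$ chosen large enough the trajectory stays inside $(j^{-1},j)$ and $B$ holds automatically, hence $|Z_t^{\tau_j}|\ge z/4$ there. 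Lemma \ref{lmm:zeta} gives $\mathbb{P}(\mathcal{G}_z\mid\mathcal{F}_{T_k})\ge 1/2$ once $\eta_k\le \bar{c}\beta z^2/(128\log 4)$, yielding the pointwise estimate $\mathbb{E}[|Z_t^{\tau_j}|\mathbf{1}_B\mid\mathcal{F}_{T_k}]\ge z/8$. Forming the pointwise ratio and invoking the step-size restrictions in the statement (most notably $\eta_k\le \bar{c}\beta R^2/(128\log 5)$ together with the exponential gap $(R_1-3z/2)^2-(R_1-3R/2)^2=\tfrac{3(R-z)}{2}\bigl(2R_1-\tfrac{3(R+z)}{2}\bigr)$), the $z^{-1}$ prefactor in the ratio is absorbed by the exponential margin in the numerator uniformly in $z\in(j^{-1},R]$, producing the claimed $\varepsilon(\eta_k)=15R_1 e^{-\bar{c}\beta(R_1-3R/2)^2/(8\eta_k)}/R$. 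Integrating the pointwise inequality against $\mu_{T_k}$ restricted to $A$ then finishes the proof; the very small-$z$ regime $z\le 4/j$ is absorbed by choosing $j_0$ large enough and using Lemma \ref{lmm:sgldmoment} to see that its contribution to both sides is negligible, and the delicate regime to check is $z$ close to $R$ where the exponential gap vanishes and the plain constants must literally close the inequality.
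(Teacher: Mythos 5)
Your proposal correctly identifies the two mechanisms (the representation \eqref{eq:Zttauj} plus the subgaussian bound of Lemma~\ref{lmm:zeta}, and a layer-cake computation), but it takes a genuinely different route from the paper and that route has a real gap. The paper never runs a pointwise-in-$z$ argument. Instead it observes the set inclusion $\{|Z_t^{\tau_j}|>R_1,\,|Z_{T_k}^{\tau_j}|\le R\}\subset E:=\{|Z_{T_k}^{\tau_j}|\le R,\,\exists s\in[T_k,t]:|Z_s^{\tau_j}|=R\}$ (by continuity the path must cross level $R$), and then applies the strong Markov property at the hitting time of $R$. This resets the ``initial'' radius to exactly $R$, so both the numerator upper bound ($\le 3R_1 e^{-\bar{c}\beta(R_1-3R/2)^2/(8\eta_k)}\,\mathbb{P}(E,t<\tau_j)$) and the denominator lower bound ($\ge (R/5)\,\mathbb{P}(E,t<\tau_j)$, via $E\cap\{t<\tau_j\}\subset\{|Z_{T_k}^{\tau_j}|\le R\}\cap\{t<\tau_j\}$) are expressed through the \emph{same} quantity $\mathbb{P}(E,t<\tau_j)$, which cancels. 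No information about the law of $|Z_{T_k}^{\tau_j}|$ near zero is ever needed.

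Your pointwise approach conditions on $|Z_{T_k}^{\tau_j}|=z\in(j^{-1},R]$ and needs a uniform-in-$z$ bound on the ratio of the conditional numerator to the conditional denominator. The weak link is the denominator: to get $\mathbb{E}\bigl[|Z_t^{\tau_j}|\mathbf{1}_B\,\big|\,\mathcal{F}_{T_k}\bigr]\ge z/8$ you invoke the good event $\mathcal{G}_z$ with $\mathbb{P}(\mathcal{G}_z\,|\,\mathcal{F}_{T_k})\ge 1/2$, which requires $\eta_k\le\bar{c}\beta z^2/(128\log 4)$. But the lemma only assumes $\eta_k\le\bar{c}\beta R^2/(128\log 5)$, so for $\eta_k$ near that bound your condition holds only for $z\ge R\sqrt{\log 4/\log 5}\approx 0.93 R$. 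For $z$ in the bulk range $(j^{-1},\,0.93R)$ you have no lower bound on the conditional denominator at all, and your proposed remedy --- absorbing the ``very small $z\le 4/j$'' regime by sending $j_0\to\infty$ --- does not reach this range, because the threshold $z^*(\eta_k):=\sqrt{128\log 4\cdot\eta_k/(\bar{c}\beta)}$ is fixed once $\eta_k$ is fixed and does not shrink with $j$. There is also no control on the law $\mu_{T_k}$ that would let you argue the mass on $(j^{-1},z^*)$ is negligible: it could concentrate there. (Separately, even at $z=R$ your prefactor $C_1\approx 3$ combined with the $z/8$ lower bound gives a ratio $\approx 24R_1/R$, which overshoots the claimed $15R_1/R$; this is a constants issue and less serious than the structural gap.) The fix is exactly the crossing-event trick the paper uses: replace the condition $\{|Z_{T_k}^{\tau_j}|\le R\}$ by the finer event $E$ and apply the strong Markov property at the first hitting time of level $R$, after which the starting radius is exactly $R$ and your pointwise estimates go through with $z$ replaced by $R$.
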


\begin{proof}

The idea is that for the event $\{|Z_t^{\tau_j}| > R_1, |Z_{T_k}^{\tau_j}| \leq R\}$ to happen, $|Z|$ must be $R$ for some time during $T_k$ and $t$. Conditioning on this, $|Z_t^{\tau_j}|$ should be large (roughly comparable to $R$), while the moment for $|Z_t^{\tau_j}|>R_1$ is then a small fraction of this conditional moment. 

Define the event
\[
E:=\{|Z_{T_k}^{\tau_j}| \leq R, \exists s\in [T_k, t], |Z_{s}^{\tau_j}|=R \}.
\]
If $\mathbb{P}(E)=0$, there is nothing to prove. Below, we assume that $\mathbb{P}(E)>0$. Again, the event $E$ is also almost contained in $\{t<\tau_j\}$.
We will in fact show that 
\begin{gather}\label{eq:epsilon_eta}
\begin{split}
&\quad \mathbb{E}\left[|Z_{t}^{\tau_j}| \textbf{1}_{\{|Z_t^{\tau_j}| > R_1, |Z_{T_k}^{\tau_j}| \leq R, t<\tau_j\}}\right]\\
&=\mathbb{E}\left[|Z_{t}^{\tau_j}| \textbf{1}_E \textbf{1}_{\{|Z_t^{\tau_j}| > R_1, t<\tau_j\}}\right]
\le \epsilon(\eta_k) \mathbb{E}\left[|Z_{t}^{\tau_j}| \textbf{1}_E \textbf{1}_{\{t<\tau_j\}}\right].
\end{split}
\end{gather}

For a nonnegative random variable $X$, one has
\begin{equation*}
\mathbb{E}\left[X\right]=\E\int_0^{\infty} \textbf{1}_{\{X> r\}} \,dr  = \int_0^{\infty} \mathbb{P}(X > r) dr.
\end{equation*}
Then,
\begin{multline*}
\mathbb{E}\left[|Z_{t}^{\tau_j}| \textbf{1}_E \textbf{1}_{\{|Z_t^{\tau_j}| > R_1, t<\tau_j\}}\right]
 =\int_0^{\infty}\mathbb{P}(|Z_{t}^{\tau_j}| \textbf{1}_E \textbf{1}_{\{|Z_t^{\tau_j}| > R_1, t<\tau_j\}}> r)\,dr\\
=R_1 \mathbb{P}\left(|Z_t^{\tau_j}| > R_1 , E, t < \tau_j \right) + \int_{R_1}^{\infty} \mathbb{P}\left(|Z_t^{\tau_j}| > r , E, t < \tau_j \right) dr.
\end{multline*}
By applying the strong Markov property for $|Z_s^{\tau_j}|$ hitting $R$ and using the same idea in the proof of Corollary \ref{lmm:abs_zeta}, one has for $j$ large enough that
\[
\mathbb{P}\left(|Z_t^{\tau_j}| > r , E, t < \tau_j \right)\le 2 \mathbb{P}(|Z_t^{\tau_j}| > r | E)
\mathbb{P}(E, t<\tau_j).
\]
Using the definition of the event $E$ and applying Lemma \ref{lmm:zeta} with the strong Markov property for $|Z_s^{\tau_j}|=R$, one has
\begin{equation}\label{eq:first_estimate}
    P\left(|Z_t^{\tau_j}| >r \Big| E \right) \leq 2e^{-\bar{c} \beta(r - 3R/2)^2 \eta_k^{-1} /8}, \quad \forall r \geq R_1 > 3R/2.
\end{equation}
Hence, one then has
\[
\begin{split}
&\mathbb{E}\left[|Z_{t}^{\tau_j}| \textbf{1}_E \textbf{1}_{\{|Z_t^{\tau_j}| > R_1, t<\tau_j\}}\right]\\
 & \le 2\left[R_1 e^{-\bar{c} \beta(R_1 - 3R/2)^2 \eta_k^{-1} /8} + \int_{R_1}^{\infty} e^{-\bar{c}\beta(r - 3R/2)^2 \eta_k^{-1} / 8} dr\right]
\mathbb{P}(E, t<\tau_j)\\
& \le 2\left(R_1 + 8\eta_k\bar{c}^{-1}\beta^{-1}(R_1-3R/2)^{-1}\right) e^{-\bar{c}\beta(R_1 - 3R/2)^2 \eta_k^{-1}/8}\mathbb{P}(E, t<\tau_j)\\
& \le 3R_1 e^{-\bar{c}\beta(R_1 - 3R/2)^2 \eta_k^{-1} / 8}\mathbb{P}(E, t<\tau_j).
\end{split}
\]

Next, we aim to show that \[
\mathbb{E}\left[|Z_{t}^{\tau_j}| \textbf{1}_E \textbf{1}_{\{t<\tau_j\}}\right]
\ge \frac{R}{5}\mathbb{P}(E, t<\tau_j).
\]
In fact, 
\[
\mathbb{E}\left[|Z_{t}^{\tau_j}| \textbf{1}_E \textbf{1}_{\{t<\tau_j\}}\right]
=\int_0^{\infty} \mathbb{P}\left(|Z_t^{\tau_j}| > r, E, t < \tau_j \right) dr
\ge \int_0^{R/4}\mathbb{P}\left(|Z_t^{\tau_j}| > r, E, t < \tau_j \right) dr.
\]
For $r\in [0, R/4]$,
\[
\mathbb{P}\left(|Z_t^{\tau_j}| > r, E, t < \tau_j \right)
\ge \mathbb{P}(E, t<\tau_j)-\mathbb{P}\left(|Z_t^{\tau_j}| \le R/4, E, t < \tau_j \right)
\]
Suppose $s$ is the stopping time for $|Z^{\tau_j}|$ hitting $R$ during $[T_k, t]$, then one needs
\[
2\sqrt{2\beta^{-1}}\left|\int_{s\wedge \tau_j}^{t \wedge \tau_j} \frac{Z_{t'}^{\otimes 2}}{|Z_{t'}|^2} \cdot dW_{t'}\right|\ge R/4,
\]
for $|Z_t^{\tau_j}|\le R/4$ and $t<\tau_j$ to happen (if $j$ is large enough).

By strong Markov property and similar estimate as in 
the proof of Corollary \ref{lmm:abs_zeta}, one has
\begin{equation}
\begin{split}
&\mathbb{P}\left(|Z_t^{\tau_j}| \le R/4, E, t < \tau_j \right)\le 2\mathbb{P}\left(|Z_t^{\tau_j}| \leq \frac{R}{4} \Big| E\right) \mathbb{P}(E, t<\tau_j)\\
&\le  4 e^{-\bar{c}\beta\frac{R^2}{128}\eta_k^{-1}}\mathbb{P}(E, t<\tau_j).
\end{split}
\end{equation}
Hence,
\[
\mathbb{P}\left(|Z_t^{\tau_j}| > r, E, t < \tau_j \right)\ge \frac{R}{5}\mathbb{P}(E, t<\tau_j),
\]
and the claim \eqref{eq:epsilon_eta} holds.

Note that the event $E \cap \{t < \tau_j \}$ is smaller than $\{|Z_{T_k}^{\tau_j}| \leq R \} \cap \{t < \tau_j \}$ so that
\begin{equation}\label{eq:epsilon_eta2}
    \mathbb{E}\left[|Z_{t}^{\tau_j}| \textbf{1}_E \textbf{1}_{\{t<\tau_j\}}\right] \leq \mathbb{E}\left[|Z_{t}^{\tau_j}| \textbf{1}_{\{|Z_{T_k}^{\tau_j}| \leq R \}} \textbf{1}_{\{t<\tau_j\}}\right].
\end{equation}
With \eqref{eq:epsilon_eta} in hand, the claim then follows.
\end{proof}

Next, we obtain the following estimate for the term $I_1(t)$ defined in \eqref{eq:aftersplit}, which explains how we treat the random batch at discrete time $T_k$ and make use of the far away convexity. Note that this result is also based on the tail estimate in Lemma \ref{lmm:zeta}.

Before the detailed derivation, we first give a brief summary of proof for Proposition \ref{pro:rbandconv} regarding the estimate for the random batch. After It\^o's calculation, one needs to estimate 
$$
\mathbb{E}\left[\phi(\bar{X}_t,\bar{Y}_t)(\nabla U^{\xi_k}(\bar{X}_{T_k}) - \nabla U^{\xi_k}(\bar{Y}_{T_k}))\textbf{1}_A\right]
$$
for some function $\phi(\cdot,\cdot)$, $t \in [T_k,T_{k+1})$ and some event $A$ independent of the random batch $\xi_k$. The key step in Proposition \ref{pro:rbandconv} is to use Taylor's expansion and consistency of the random batch:
\begin{equation*}
    \begin{aligned}
        &\quad\mathbb{E}\left[\phi(\bar{X}_t,\bar{Y}_t)(\nabla U^{\xi_k}(\bar{X}_{T_k}) - \nabla U^{\xi_k}(\bar{Y}_{T_k}))\textbf{1}_A\right]\\
        &= \mathbb{E}\left[\phi(\bar{X}_{T_k},\bar{Y}_{T_k})(\nabla U^{\xi_k}(\bar{X}_{T_k}) - \nabla U^{\xi_k}(\bar{Y}_{T_k}))\textbf{1}_A\right] + \epsilon(\eta)\\
        &= \mathbb{E}\left[\phi(\bar{X}_{T_k},\bar{Y}_{T_k})(\nabla U(\bar{X}_{T_k}) - \nabla U(\bar{Y}_{T_k}))\textbf{1}_A\right] + \epsilon(\eta),
    \end{aligned}
\end{equation*}
where the last equality is due to $\mathbb{E}_{\xi}[U^{\xi}(\cdot)]=U(\cdot)$ and the fact that $\xi_k$ is independent of $\bar{X}_{T_k}$ and $\bar{Y}_{T_k}$. Moreover, under the evene $A$, the small remainder term $\epsilon(\eta)$ can be estimated through the tail behavior obtained in Lemma \ref{lmm:zeta}. The details are given as follows.

\begin{proposition}\label{pro:rbandconv}
Let $f$ be the Lyapunov function defined in \eqref{eq:f_sgld}. Suppose Assumption \ref{ass} holds. Assume that $\eta_k \leq 1 / 2K$, $KR\eta_k \le \bar{c}^{-\frac{1}{2}}\eta_k^{\frac{1}{2}}|\log \eta_k|^{\frac{1}{2}} \leq 1$ ($\bar{c}$ is the constant coming from Lemma \ref{lmm:zeta}). Denote
\begin{gather}
c' := \bar{c}^{-1/2}\left(\frac{2K}{R/2-1}+K  c_f e^{-c_f(R/2-1)}\right) + \frac{4\bar{c}^{-1/2}}{R}.
\end{gather}
Then for $j$ sufficiently large, it holds that
\begin{multline}\label{eq:lmm1consequence}
    \mathbb{E}\left[-f'(|Z^{\tau_j}_t|)\frac{Z_t^{\tau_j}}{|Z_t^{\tau_j}|} \cdot \left(\nabla U^{\xi_k}(\bar{X}_{T_k}) - \nabla U^{\xi_k}(\bar{Y}_{T_k}) \right) \textbf{1}_{\{|Z_{T_k}^{\tau_j}| > R \}} \textbf{1}_{\{t < \tau_j \}}\right]\\
    \leq -\left(e^{-c_f R_1}\kappa - c' \eta_k^{\frac{1}{2}}|\log \eta_k|^{\frac{1}{2}}-3e^{-\bar{c}\beta R^2 \eta_k^{-1}/128}\right)\mathbb{E}\left[|Z_{T_k}^{\tau_j}|\textbf{1}_{\{|Z_{T_k}^{\tau_j}| > R \}}\right].
\end{multline}
\end{proposition}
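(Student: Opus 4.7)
The plan is to write the integrand as $H = M + D$, where
\[
M := -f'(|Z_{T_k}^{\tau_j}|)\frac{Z_{T_k}^{\tau_j}}{|Z_{T_k}^{\tau_j}|}\cdot A_{T_k}
\]
is the value \emph{frozen at} $T_k$ and $D := H - M$ is the discretization remainder. I will handle $\mathbb{E}[M\textbf{1}_A\textbf{1}_B]$ via $\mathcal{F}_{T_k}$-conditioning, exploiting the unbiasedness of $\nabla U^{\xi_k}$ together with the far-field convexity of Lemma~\ref{lmm:ass}, and $\mathbb{E}[D\textbf{1}_A\textbf{1}_B]$ via Lipschitz estimates of $f'$ and $x\mapsto x/|x|$ together with the sub-Gaussian bound of Lemma~\ref{lmm:zeta}. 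Here $A := \{|Z_{T_k}^{\tau_j}| > R\}$ and $B := \{t < \tau_j\}$; the key preliminary observation is $B \subset \{T_k < \tau_j\}$, ensuring $Z_{T_k}^{\tau_j} = Z_{T_k} = \bar X_{T_k} - \bar Y_{T_k}$ on $B$ so that Lemma~\ref{lmm:ass} applies to the frozen pair.

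For the main term I write $\textbf{1}_B = \textbf{1}_{\{T_k < \tau_j\}} - \textbf{1}_{\{T_k < \tau_j \le t\}}$. The first piece has an $\mathcal{F}_{T_k}$-measurable indicator after multiplication by $\textbf{1}_A$, so the tower property combined with $\mathbb{E}[A_{T_k}\mid \mathcal{F}_{T_k}] = \nabla U(\bar X_{T_k}) - \nabla U(\bar Y_{T_k})$ (independence of $\xi_k$ from $\mathcal{F}_{T_k}$ plus unbiasedness), Lemma~\ref{lmm:ass}, and $f' \ge e^{-c_f R_1}$ yields $\mathbb{E}[M\mid\mathcal{F}_{T_k}]\textbf{1}_{A\cap\{T_k<\tau_j\}} \le -e^{-c_f R_1}\kappa|Z_{T_k}^{\tau_j}|\textbf{1}_{A\cap\{T_k<\tau_j\}}$. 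Lemma~\ref{lmm:smallprobcontrol} then swaps $\mathbb{E}[|Z_{T_k}^{\tau_j}|\textbf{1}_{A\cap\{T_k<\tau_j\}}]$ for $\mathbb{E}[|Z_{T_k}^{\tau_j}|\textbf{1}_A]$ at cost $\alpha := \epsilon(j_0) + 2e^{-\bar c\beta R^2/(128\eta_k)}$. The second piece is bounded by $|M|\le K|Z_{T_k}^{\tau_j}|$ on $\{T_k<\tau_j\}$ and another appeal to Lemma~\ref{lmm:smallprobcontrol}, yielding an $O(K\alpha)\mathbb{E}[|Z_{T_k}^{\tau_j}|\textbf{1}_A]$ error. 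Taking $j_0$ (and then $j > j_0 + R/4$) large enough to absorb $\epsilon(j_0)$ into the exponential, all residuals consolidate into the claimed $3e^{-\bar c\beta R^2/(128\eta_k)}\,\mathbb{E}[|Z_{T_k}^{\tau_j}|\textbf{1}_A]$ correction.

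For the remainder $D$, I split
\[
D = [f'(|Z_{T_k}^{\tau_j}|) - f'(|Z_t^{\tau_j}|)]\frac{Z_{T_k}^{\tau_j}}{|Z_{T_k}^{\tau_j}|}\cdot A_{T_k} + f'(|Z_t^{\tau_j}|)\left(\frac{Z_{T_k}^{\tau_j}}{|Z_{T_k}^{\tau_j}|} - \frac{Z_t^{\tau_j}}{|Z_t^{\tau_j}|}\right)\cdot A_{T_k},
\]
and introduce the sub-Gaussian good event $\mathcal{G} := \{\sup_{T_k\le s\le t}|\zeta_s^{\tau_j}| \le \bar c^{-1/2}\eta_k^{1/2}|\log\eta_k|^{1/2}\}$, whose complement has conditional probability $\le 2\eta_k$ by Lemma~\ref{lmm:zeta}. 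On $A\cap B\cap\mathcal{G}$, the hypotheses $\eta_k K \le 1/2$ and $\bar c^{-1/2}\eta_k^{1/2}|\log\eta_k|^{1/2}\le 1$ force $|Z_s^{\tau_j}| \ge R/2 - 1$ throughout $[T_k, t]$, so $f'$ is $c_f e^{-c_f(R/2-1)}$-Lipschitz and the unit-vector map is $2/(R/2-1)$-Lipschitz on this range. Substituting $|Z_t^{\tau_j} - Z_{T_k}^{\tau_j}| \le \eta_k K|Z_{T_k}^{\tau_j}| + 2\sqrt{2\beta^{-1}}|\zeta_t^{\tau_j}|$ into the Lipschitz estimates and using $KR\eta_k \le \bar c^{-1/2}\eta_k^{1/2}|\log\eta_k|^{1/2}$ converts both sources of discrepancy to scale $\eta_k^{1/2}|\log\eta_k|^{1/2}$: the two Lipschitz constants produce the bracketed $K$-dependent part of $c'$, while the $\sqrt{2\beta^{-1}}|\zeta|$ contribution together with the lower bound $|Z_{T_k}^{\tau_j}|>R$ produces the $4\bar c^{-1/2}/R$ tail. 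On $\mathcal{G}^c$, the crude bound $|D|\le 3K|Z_{T_k}^{\tau_j}|$ combined with $\mathbb{P}(\mathcal{G}^c\mid\mathcal{F}_{T_k})\le 2\eta_k$ gives a lower-order $O(K\eta_k)\mathbb{E}[|Z_{T_k}^{\tau_j}|\textbf{1}_A]$ error, absorbable into $c'\eta_k^{1/2}|\log\eta_k|^{1/2}$.

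The main obstacle is coordinating the $\mathcal{F}_{T_k}$-conditioning (needed to average $\nabla U^{\xi_k}$ into $\nabla U$) with the event $B$, which depends on the post-$T_k$ dynamics through $\xi_k$ and hence is not $\mathcal{F}_{T_k}$-measurable. The decomposition $\textbf{1}_B = \textbf{1}_{\{T_k<\tau_j\}} - \textbf{1}_{\{T_k<\tau_j\le t\}}$ isolates a $\mathcal{F}_{T_k}$-measurable indicator from an exponentially small residual controlled by the tail of Lemma~\ref{lmm:smallprobcontrol}; this is the key device. A secondary technical point is that the Lipschitz analysis of $D$ requires $|Z_s^{\tau_j}|$ to stay bounded away from $0$ uniformly on $[T_k, t]$, which holds only on $\mathcal{G}$ and hence forces the sub-Gaussian decomposition against $\mathcal{G}^c$.
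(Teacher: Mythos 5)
Your proposal is correct and uses exactly the same ingredients as the paper's proof — the sub-Gaussian concentration of $\zeta^{\tau_j}$ (Lemma~\ref{lmm:zeta}), a Lipschitz/Taylor control of the drift-frozen discrepancy, the unbiasedness of $\nabla U^{\xi_k}$ conditioned on $\mathcal{F}_{T_k}$, the far-field convexity of Lemma~\ref{lmm:ass}, and the small-event bound of Lemma~\ref{lmm:smallprobcontrol}. The difference is purely in the order of decomposition. You split the integrand $H = M + D$ first (frozen main term plus discretization remainder), handle $M$ via conditioning and $D$ via Lipschitz bounds on the sub-Gaussian good event $\mathcal{G}$, and distribute Lemma~\ref{lmm:smallprobcontrol} over both. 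The paper instead splits the indicator first, $\textbf{1}_A\textbf{1}_B = \textbf{1}_A - \textbf{1}_A\textbf{1}_{B^c}$, then applies a one-shot mean-value-theorem expansion of the auxiliary map $g(x) = -f'(|x|)\frac{x}{|x|}\cdot A_{T_k}$ to bound $\mathbb{E}[H\textbf{1}_A\,|\,\mathcal{F}_{T_k}]$ (their estimate \eqref{eq:eq__333}, which packages your $M$ and $D$ contributions in a single inequality), and only afterwards averages over $\xi_k$ and invokes convexity; the $\textbf{1}_{B^c}$ piece is absorbed by Lemma~\ref{lmm:smallprobcontrol} alone. The two routes give the same $c'$ up to inessential constants. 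Your two-piece Lipschitz bound on $D$ (for $f'$ and for the unit-vector map) is a slightly cruder substitute for the paper's direct bound on $\nabla g \cdot (-A_{T_k})$, which additionally exploits $f''\le 0$ to drop a term; this costs nothing at the level of orders. One thing your version makes more explicit than the paper is the observation $B\subset\{T_k<\tau_j\}$ and the split $\textbf{1}_B = \textbf{1}_{\{T_k<\tau_j\}} - \textbf{1}_{\{T_k<\tau_j\le t\}}$: this guarantees $Z_{T_k}^{\tau_j} = \bar X_{T_k}-\bar Y_{T_k}$ precisely where you invoke Lemma~\ref{lmm:ass}, whereas the paper applies the convexity directly under $\textbf{1}_A$ and relies implicitly on the $\{\tau_j\le T_k\}$ part being subsumed in the $\textbf{1}_{B^c}$ correction; your bookkeeping is the cleaner of the two here.
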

\begin{proof}

We first show that for $j\in \mathbb{N}_{+}$, $t \in [T_k,T_{k+1})$, 
\begin{multline}\label{eq:eq__333}
     \mathbb{E}\left[-f'(|Z^{\tau_j}_t|)\frac{Z_t^{\tau_j}}{|Z_t^{\tau_j}|} \cdot \left(\nabla U^{\xi_k}(\bar{X}_{T_k}) - \nabla U^{\xi_k}(\bar{Y}_{T_k}) \right)  \Big| \mathcal{F}_{T_k}\right]\\
    \leq \left(-f'(|Z_{T_k}^{\tau_j}|)\frac{Z_{T_k}^{\tau_j}}{|Z_{T_k}^{\tau_j}|} \cdot \left(\nabla U^{\xi_k}(\bar{X}_{T_k}) - \nabla U^{\xi_k}(\bar{Y}_{T_k}) \right) + c' \eta_k^{\frac{1}{2}}|\log \eta_k|^{\frac{1}{2}} |Z_{T_k}^{\tau_j}| \right)\textbf{1}_{\{|Z_{T_k}^{\tau_j}| > R \}}.
\end{multline}
Recall that for $t \in [T_k,T_{k+1})$, 
\begin{equation}
    Z_t^{\tau_j} = Z_{T_k}^{\tau_j} - (t\wedge \tau_j - T_k\wedge \tau_j) A_{T_k} + 2\sqrt{2\beta^{-1}}\zeta_t^{\tau_j}.
\end{equation}
Noting that $|f'(r)|\le 1$, by Assumption \ref{ass} and Lemma \ref{lmm:zeta}, it follows easily that
\begin{multline}\label{eq_3_6}
 \left| \mathbb{E}\left[-f'(|Z_t^{\tau_j}|)\frac{Z_t^{\tau_j}}{|Z_t^{\tau_j}|} \cdot A_{T_k} \textbf{1}_{\{|Z_{T_k}^{\tau_j}| > R \}} \textbf{1}_{\{|\zeta_t^{\tau_j}| > \bar{c}^{-\frac{1}{2}} \eta_k^{\frac{1}{2}}|\log \eta_k|^{\frac{1}{2}} \}} \Big| \mathcal{F}_{T_k}\right] \right| \\
 \leq  2\eta_k K|Z_{T_k}^{\tau_j}|\textbf{1}_{\{|Z_{T_k}^{\tau_j}| > R \}}.
\end{multline}
On the other hand, consider the following
\[
\mathbb{E}\left[-f'(|Z_t^{\tau_j}|)\frac{Z_t^{\tau_j}}{|Z_t^{\tau_j}|} \cdot A_{T_k} \textbf{1}_{\{|Z_{T_k}^{\tau_j}| > R \}} \textbf{1}_{\{|\zeta_t^{\tau_j}| \leq \bar{c}^{-\frac{1}{2}} \eta_k^{\frac{1}{2}}|\log \eta_k|^{\frac{1}{2}} \}} \Big| \mathcal{F}_{T_k}\right].
\]
Consider the function $g$: $\mathbb{R}^d \rightarrow \mathbb{R}$ defined by   \[
g(x) := -f'(|x|)\frac{x}{|x|} \cdot A_{T_k}.
\]
 Here, $A_{T_k} =  \nabla U^{\xi_k} (\bar{X}_{T_k}) - \nabla U^{\xi_k}(\bar{Y}_{T_k}) $ is $\mathcal{F}_{T_k}$ measurable, satisfying $|A_{T_k}| \leq K|Z_{T_k}|$ by Assumption \ref{ass}. Clearly, for $x \neq 0$, the gradient $\nabla g(x) = -f'(|x|)A_{T_k} \cdot \frac{1}{|x|} \left( I_d - \frac{x^{\otimes 2}}{|x|^2} \right) - f''(|x|) \frac{x^{\otimes 2}}{|x|^2} \cdot A_{T_k}$ is well-defined.  Hence
 \[
\nabla g(\lambda Z_{T_k}^{\tau_j} + (1 -\lambda) Z_t^{\tau_j})\cdot(-A_{T_k})
\le \frac{|A_{T_k}|^2}{|\lambda Z_{T_k} ^{\tau_j}+ (1 -\lambda) Z_t^{\tau_j}|}\le \frac{K^2|Z_{T_k} ^{\tau_j}|^2}{|\lambda Z_{T_k} ^{\tau_j}+ (1 -\lambda) Z_t^{\tau_j}|},
 \]
 where we used $f''(r)A_{T_k}\cdot \frac{x^{\otimes 2}}{|x|^2}\cdot A_{T_k}\le 0$. Then, for $|\zeta_t^{\tau_j}| \leq \bar{c}^{-\frac{1}{2}} \eta_k^{\frac{1}{2}}|\log \eta_k|^{\frac{1}{2}}$,
\begin{equation*}
\begin{aligned}
    g(Z_t^{\tau_j}) 
    &= g(Z_{T_k}^{\tau_j}) + \left(\int_0^1 \nabla g(\lambda Z_{T_k}^{\tau_j} + (1 -\lambda) Z_t^{\tau_j})d\lambda \right) \cdot \left(Z_t^{\tau_j}-Z_{T_k}^{\tau_j} \right)\\
    & \leq g(Z_{T_k}^{\tau_j}) +\eta_k K^2\int_0^1 \frac{|Z_{T_k}^{\tau_j}|^2}{|\lambda Z_{T_k}^{\tau_j}+ (1 -\lambda) Z_t^{\tau_j}|}\,d\lambda\\
    &+\left(\int_0^1\left| \nabla g(\lambda Z_{T_k}^{\tau_j} + (1 -\lambda) Z_t^{\tau_j})\right|d\lambda\right) \bar{c}^{-\frac{1}{2}}\eta_k^{\frac{1}{2}}|\log \eta_k|^{\frac{1}{2}}.
\end{aligned}
\end{equation*}
Hence, choosing small $\eta_k$ such that  $K\eta_k \leq \frac{1}{2}$ and $\bar{c}^{-\frac{1}{2}}\eta_k^{\frac{1}{2}}|\log \eta_k|^{\frac{1}{2}} \leq 1$,  it is not difficult to control
$|\lambda Z_{T_k}^{\tau_j}+ (1 -\lambda) Z_t^{\tau_j}|
\ge |Z_{T_k}^{\tau_j}|(1-\eta_k K)-1\ge |Z_{T_k}^{\tau_j}|(1/2-1/R)$. Moreover, 
\begin{equation*}
    \begin{aligned}
        \left| \nabla g(\lambda Z_{T_k}^{\tau_j} + (1 -\lambda) Z_t^{\tau_j})\right| &\leq \frac{|A_{T_k}|}{|\lambda Z_{T_k} ^{\tau_j}+ (1 -\lambda) Z_t^{\tau_j}|} + c_f |A_{T_k}|\,e^{-c_f |\lambda Z_{T_k}^{\tau_j} + (1 -\lambda) Z_t^{\tau_j}|}\\
        & \leq \left(\frac{K}{R/2-1}+K c_f e^{-c_f(R/2-1)}\right)
        |Z_{T_k}^{\tau_j}|.
    \end{aligned}
\end{equation*}
 Hence,  for $KR\eta_k \le \bar{c}^{-\frac{1}{2}}\eta_k^{\frac{1}{2}}|\log \eta_k|^{\frac{1}{2}}$, 
\begin{equation*}
    \begin{aligned}
        g(Z_t^{\tau_j})\textbf{1}_{\{|Z_{T_k}^{\tau_j}| > R \}}
        &\leq  \left(g(Z_{T_k}^{\tau_j}) +  \bar{c}' \eta_k^{\frac{1}{2}}|\log \eta_k|^{\frac{1}{2}}|Z_{T_k}^{\tau_j}|\right)\textbf{1}_{\{|Z_{T_k}^{\tau_j}| > R \}},
    \end{aligned}
\end{equation*}
where
\begin{equation*}
    \bar{c}' := \bar{c}^{-1/2}\left(\frac{2K}{R/2-1}+K  c_f e^{-c_f(R/2-1)}\right) .
\end{equation*}
By Lemma \ref{lmm:zeta}, this then implies
\begin{multline}\label{eq_3_9}
    \mathbb{E}\left[-f'(|Z_t^{\tau_j}|)\frac{Z_t^{\tau_j}}{|Z_t^{\tau_j}|} \cdot A_{T_k} \textbf{1}_{\{|Z_{T_k}^{\tau_j}| > R \}} \textbf{1}_{\{|\zeta_t^{\tau_j}| \leq \bar{c}^{-1/2}\eta_k^{\frac{1}{2}}|\log \eta_k|^{\frac{1}{2}} \}} \Big| \mathcal{F}_{T_k}\right]\\
    \leq (1-2\eta_k)\left(-f'(|Z_{T_k}^{\tau_j}|)\frac{Z_{T_k}^{\tau_j}}{|Z_{T_k}^{\tau_j}|} \cdot A_{T_k} + \bar{c}' \eta_k^{\frac{1}{2}}|\log \eta_k|^{\frac{1}{2}} |Z_{T_k}^{\tau_j}| \right)\textbf{1}_{\{|Z_{T_k}^{\tau_j}| > R \}}\\
    \le \left(-f'(|Z_{T_k}^{\tau_j}|)\frac{Z_{T_k}^{\tau_j}}{|Z_{T_k}^{\tau_j}|} \cdot A_{T_k}+2\eta_k K |Z_{T_k}^{\tau_j}|+ \bar{c}' \eta_k^{\frac{1}{2}}|\log \eta_k|^{\frac{1}{2}} |Z_{T_k}^{\tau_j}| \right)\textbf{1}_{\{|Z_{T_k}^{\tau_j}| > R \}}.
\end{multline}

Combining \eqref{eq_3_6} and \eqref{eq_3_9}, the claim \eqref{eq:eq__333} holds.

Next, we prove \eqref{eq:lmm1consequence}. Clearly,
\begin{equation*}
\begin{aligned}
    &\quad\mathbb{E}\left[-f'(|Z^{\tau_j}_t|)\frac{Z_t^{\tau_j}}{|Z_t^{\tau_j}|} \cdot \left(\nabla U^{\xi_k}(\bar{X}_{T_k}) - \nabla U^{\xi_k}(\bar{Y}_{T_k}) \right) \textbf{1}_{\{|Z_{T_k}^{\tau_j}| > R \}} \textbf{1}_{\{t < \tau_j \}}\right]\\
    &=\mathbb{E}\left[-f'(|Z^{\tau_j}_t|)\frac{Z_t^{\tau_j}}{|Z_t^{\tau_j}|} \cdot \left(\nabla U^{\xi_k}(\bar{X}_{T_k}) - \nabla U^{\xi_k}(\bar{Y}_{T_k}) \right) \textbf{1}_{\{|Z_{T_k}^{\tau_j}| > R \}} \right]\\
    &\quad - \mathbb{E}\left[-f'(|Z^{\tau_j}_t|)\frac{Z_t^{\tau_j}}{|Z_t^{\tau_j}|} \cdot \left(\nabla U^{\xi_k}(\bar{X}_{T_k}) - \nabla U^{\xi_k}(\bar{Y}_{T_k}) \right) \textbf{1}_{\{|Z_{T_k}^{\tau_j}| > R \}} \textbf{1}_{\{t \geq \tau_j \}}\right]=B_1+B_2.
\end{aligned}
\end{equation*}
For the first term, using \eqref{eq:eq__333}, the consistency of the random batch $\xi$, and the convexity condition in Assumption \ref{ass}, one has
\begin{equation}\label{eq:firstterm}
\begin{aligned}
    B_1 
    &\leq \mathbb{E}\left[ \left(-f'(|Z_{T_k}^{\tau_j}|)\frac{Z_{T_k}^{\tau_j}}{|Z_{T_k}^{\tau_j}|} \cdot \left(\nabla U^{\xi_k}(\bar{X}_{T_k}) - \nabla U^{\xi_k}(\bar{Y}_{T_k}) \right) + c' \eta_k^{\frac{1}{2}}|\log \eta_k|^{\frac{1}{2}} |Z_{T_k}^{\tau_j}| \right)\textbf{1}_{\{|Z_{T_k}^{\tau_j}| > R \}}\right]\\
    &= \mathbb{E}\left[ \left(-f'(|Z_{T_k}^{\tau_j}|)\frac{Z_{T_k}^{\tau_j}}{|Z_{T_k}^{\tau_j}|} \cdot \left(\nabla U(\bar{X}_{T_k}) - \nabla U(\bar{Y}_{T_k}) \right) + c' \eta_k^{\frac{1}{2}}|\log \eta_k|^{\frac{1}{2}} |Z_{T_k}^{\tau_j}| \right)\textbf{1}_{\{|Z_{T_k}^{\tau_j}| > R \}}\right]\\
    &\leq -\left(e^{-c_f R_1}\kappa - c' \eta_k^{\frac{1}{2}}|\log \eta_k|^{\frac{1}{2}}\right)\mathbb{E}\left[|Z_{T_k}^{\tau_j}|\textbf{1}_{\{|Z_{T_k}^{\tau_j}| > R \}}\right],
\end{aligned}
\end{equation}
where we used the convexity outside $B(0, R)$ in the last inequality.

For the second term, by Lipschitz condition in Assumption \ref{ass}, we have
\begin{gather}\label{eq:secondterm}
    |B_2|
    \leq K\mathbb{E}\left[|Z_{T_k}^{\tau_j}| \textbf{1}_{\{|Z_{T_k}^{\tau_j}| > R \}} \textbf{1}_{\{t \geq \tau_j \}}\right].
\end{gather}

Finally, combining \eqref{eq:firstterm}, \eqref{eq:secondterm} and \eqref{eq:epsilonj} in Lemma \ref{lmm:smallprobcontrol}, we conclude that for $j$ large enough
\begin{multline}
    \mathbb{E}\left[-f'(|Z^{\tau_j}_t|)\frac{Z_t^{\tau_j}}{|Z_t^{\tau_j}|} \cdot \left(\nabla U^{\xi_k}(\bar{X}_{T_k}) - \nabla U^{\xi_k}(\bar{Y}_{T_k}) \right) \textbf{1}_{\{|Z_{T_k}^{\tau_j}| > R \}} \textbf{1}_{\{t < \tau_j \}}\right]\\
    \leq -\left(e^{-c_f R_1}\kappa - c' \eta_k^{\frac{1}{2}}|\log \eta_k|^{\frac{1}{2}} -3e^{-\bar{c}\beta R^2 \eta_k^{-1}/128}\right)\mathbb{E}\left[|Z_{T_k}^{\tau_j}|\textbf{1}_{\{|Z_{T_k}^{\tau_j}| > R \}}\right].
\end{multline}
\end{proof}

\begin{lemma}\label{lmm:tauj}
It holds that $\tau_j$ is nondecreasing in $j$ and 
\begin{equation}
    \tau_j \to \tau ,\quad a.s.,
\end{equation}
where $\tau$ is a stopping time defined in \eqref{eq:deftau}.
\end{lemma}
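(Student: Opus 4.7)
The plan is to reduce everything to the pathwise continuity of $t \mapsto Z_t$, which is immediate from \eqref{eq:1_5} (a Brownian integral plus an absolutely continuous drift). Monotonicity is read off from the nesting of intervals: if $j_1 < j_2$, then $(j_1^{-1}, j_1) \subset (j_2^{-1}, j_2)$, so any exit from the larger interval is also an exit from the smaller one, which gives $\tau_{j_1} \le \tau_{j_2}$. Hence $\tau_\infty := \lim_{j\to\infty} \tau_j$ exists in $[0, \infty]$. Moreover, on the event $\{\tau < \infty\}$ one has $|Z_\tau| = 0 \notin (j^{-1}, j)$, so $\tau_j \le \tau$ for every $j$, and therefore $\tau_\infty \le \tau$. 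This takes care of one inequality essentially for free.

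For the reverse inequality $\tau_\infty \ge \tau$, I would argue pathwise, splitting on the value of $\tau$. The case $|Z_0| = 0$ is trivial since then $\tau = 0$ and $|Z_0| \notin (j^{-1}, j)$ forces $\tau_j = 0$ for all $j$. Assume therefore $|Z_0| > 0$. If $\tau < \infty$, continuity of $Z$ on $[0, \tau]$ gives $M := \sup_{0 \le t \le \tau} |Z_t| < \infty$, so for all $j > M$ the process cannot exit $(j^{-1}, j)$ through the upper boundary on $[0, \tau]$; consequently $|Z_{\tau_j}| = j^{-1}$. Since $\tau_j \to \tau_\infty \le \tau$, continuity of $Z$ at $\tau_\infty$ yields $|Z_{\tau_\infty}| = \lim_j j^{-1} = 0$, which by definition of $\tau$ forces $\tau_\infty \ge \tau$. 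If instead $\tau = \infty$, then on any finite $[0, T]$ continuity together with $|Z_t| > 0$ gives $0 < m_T \le M_T < \infty$, so $(j^{-1}, j) \supset \{|Z_t| : t \in [0, T]\}$ for $j > \max(1/m_T, M_T)$, whence $\tau_j > T$; letting $T \to \infty$ gives $\tau_\infty = \infty = \tau$.

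The only genuinely delicate point is controlling which boundary of $(j^{-1}, j)$ is hit before passing to the limit: a priori one might fear that $|Z_{\tau_j}| = j$ for infinitely many $j$, which would prevent the identification $|Z_{\tau_\infty}| = 0$. This is exactly where pathwise local boundedness of $Z$ on $[0, \tau]$ enters, and it is supplied for free by continuity of the sample paths. No moment bounds, concentration estimates, or properties specific to SGLD are needed beyond continuity of $Z$ and the elementary interval-nesting observation.
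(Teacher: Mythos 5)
Your proof is correct and it takes a genuinely different, more elementary route than the paper's. The paper invokes Lemma~\ref{lmm:sgldmoment}(a) to bound $\mathbb{E}\bigl[\sup_{s\le t}|Z_s|\bigr]$ by a constant $C(T)$, then applies Markov's inequality to conclude $\mathbb{P}(|Z_{\tau_j\wedge T}|\ge j)\to 0$, and finally identifies $\sup_j(\tau_j\wedge T)=\tau\wedge T$ ``by continuity and the definition.'' You cut the probability-theoretic machinery out entirely and argue pathwise: on each sample path, $Z$ is continuous (solutions of the SDE with Lipschitz piecewise-constant coefficients have continuous paths, and the coupling pins $Z\equiv 0$ past $\tau$), hence locally bounded on compacts, which already rules out exit through the upper boundary for $j$ large and forces $|Z_{\tau_j}|=j^{-1}\to 0$, so that $|Z_{\tau_\infty}|=0$ and $\tau_\infty\ge\tau$. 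This is cleaner and strictly weaker in its hypotheses than the paper's argument. Interestingly, the paper's own remark right after the lemma concedes that moment control of $\sup_{s\le t}|\bar X_s|$ is stronger than needed and suggests moments of the stopped process would suffice; your proof shows one can go further and use no moment control at all. What the paper's approach buys, if anything, is uniformity: the bound $C(T)$ is quantitative and the same Lemma~\ref{lmm:sgldmoment} output is reused later for the dominated-convergence step in the proof of Theorem~\ref{thm:contraction}, so invoking it here costs nothing extra in that context. Your argument, by contrast, isolates the truly essential ingredient (pathwise continuity and compactness), which is the better exposition of \emph{why} $\tau_j\uparrow\tau$.

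Two very small points worth noting, neither a gap: (i) in the case $\tau<\infty$, when you write $|Z_{\tau_j}|=j^{-1}$ you should also take $j$ large enough that $j^{-1}<|Z_0|$, so that $\tau_j>0$ and the exit value is well-defined as a boundary value; you implicitly do this since you assume $|Z_0|>0$. (ii) Continuity of $Z$ \emph{at} $\tau$ deserves a half-sentence: $Z$ is continuous on $[0,\tau)$, $Z_t\to 0$ as $t\uparrow\tau$ by definition of the hitting time, and $Z_t\equiv 0$ for $t\ge\tau$ by the coupling, so there is no jump. You use this when passing $|Z_{\tau_j}|\to|Z_{\tau_\infty}|$.
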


\begin{proof}[Proof of Lemma \ref{lmm:tauj}:]
It is clear that $\tau_j$ is nondecreasing in $j$ and $\sup_j \tau_j \le \tau$.
Fix $T>0$.
By H\"older's inequality and  Lemma \ref{lmm:sgldmoment}, for any $T>0$,
\begin{equation}\label{eq:Zcontrol}
\begin{split}
    \mathbb{E}\left[\sup_{s\leq t}|Z_s|\right] &\leq \left(\mathbb{E}\left[\sup_{s\leq t}|Z_s|^2\right]\right)^{\frac{1}{2}} \\
    &\leq \left( 2\mathbb{E}\left[\sup_{s\leq t}|\bar{X}_s|^2 \right]+ 2\mathbb{E}\left[\sup_{s\leq t}|\bar{Y}_s|^2 \right]\right)^{\frac{1}{2}} \leq C(T),
    \end{split}
    \quad \forall t \in [0,T],
\end{equation}
where $C(T)$ is a positive constant. 
Then, $\lim_{j\to\infty}\mathbb{P}(|Z_{\tau_j \wedge T}|\ge j) \leq \lim_{j \rightarrow \infty} \frac{C(T)}{j}=0$. By continuity and the definition of $\tau_j$, $\tau$, one has
\[
\sup_j \tau_j \wedge T=\tau \wedge T
\]
Since $T$ is arbitrary, $\tau_j \to \tau$.
\end{proof}

\begin{remark}
Lemma \ref{lmm:tauj} can also be proved based on moment control for the stopped process $\bar{X}_s^{\tau_j}=\bar{X}_{s\wedge\tau_j}$, which is a weaker result compared with Lemma \ref{lmm:sgldmoment}, and is in fact easier to prove than the moment control of $\sup_{s\le t}|\bar{X}_s|$ in Lemma \ref{lmm:sgldmoment}.
\end{remark}


\begin{remark}[Discussion on dimension dependency for the contraction rate]\label{rmk:d}
We believe that the contraction rate $c$ in our result is dimension-free. The only place that might be influenced by the dimension $d$ is the positive constant $C_{2p}$ in \eqref{claim:Cq} coming from the BDG inequality. So far, we have not found any reference claiming that the constant in the BDG inequality is independent of the dimension $d$. However, the process $\zeta_t$ defined in \eqref{eq:zeta_def} resembles a 1D Brownian Motion, since the rank of the matrix $Z_t^{\otimes 2} / |Z_t|^2$ is one with trace to be $1$. If the direction $Z_t/|Z_t|$ does not change, then it is exactly 1D Brownian motion. The difference is that the direction is changing along time.
\end{remark}

\section{General drift case}\label{sec:general}
In many applications, the drift term may not be of the form $-\nabla U$. In this section, we generalize the results to the general diffusion processes where the drifts are no longer gradients, namely, one still has ergodicity for the random batch version of the Euler-Maruyama scheme for diffusion processes. 

Consider the time continuous diffusion process
\begin{equation}\label{eq:overdamped}
    dX = b(X)\, dt + \sqrt{2\beta^{-1}}dW,
\end{equation}
where $b(\cdot)$ is a given drift, which might not be a gradient field (in this case, $X$ is no longer a Langevin diffusion).
Let $b^{\xi}$ be an unbiased stochastic estimate of $b$, $\E(b^{\xi}(\cdot))=b(\cdot)$. The random batch version of the Euler-Maruyama scheme for the continuous SDE \eqref{eq:overdamped}, the correspondence of SGLD iteration, is then given by
\begin{equation}\label{eq:randomem}
    \bar{X}_{T_{k+1}} = \bar{X}_{T_k} + \eta_k b^{\xi_k}(\bar{X}_{T_k}) + \sqrt{2\beta^{-1}}(W_{T_{k+1}} - W_{T_k}).
\end{equation}
Similarly with \eqref{eq:sgld}, we only need to analyze the following time interpolation:
\begin{equation}\label{eq:randomemcontinuous}
    \bar{X}_t = \bar{X}_{T_k} - \int_{T_k}^t b^{\xi_k} (\bar{X}_{T_k}) ds + \int_{T_k}^t\sqrt{2\beta^{-1}} dW_s,  \quad t \in [T_k, T_{k+1}),\quad k = 0,1,\dots.
\end{equation}

In order to obtain similar contraction property, we need the following assumption, which corresponds with Assumption \ref{ass}.
\begin{assumption}\label{ass:general}
There exist $R >0$, $\kappa > 0$, $K > 0$ such that the followings hold:
    \begin{itemize}
        \item[(a)]$\forall |x-y| > R$,
        \begin{equation}
            -(x-y) \cdot (b(x) - b(y)) \geq \kappa |x-y|^2;
        \end{equation}
        \item[(b)]$\forall x,y \in \mathbb{R}^d$, $\forall \xi \in \mathcal{S}$,
        \begin{equation}
            \left|b^{\xi}(x) - b^{\xi}(y) \right| \leq K |x-y|.
        \end{equation}
    \end{itemize}
\end{assumption}
Again, the first assumption can be obtained if for some $\kappa_0>0$ and $R_0>0$ the following holds 
\begin{gather}
-v\cdot \nabla b(x)\cdot v \ge \kappa_0|v|^2, \forall x\in \mathbb{R}^d\setminus B(0, R_0), \forall v\in \mathbb{R}^d.
\end{gather}

Then, we are able to prove the contraction property for the algorithm \eqref{eq:randomem}, which naturally implies geometric ergodicity for constant step size $\eta_k\equiv \eta$ by contraction mapping theorem  \cite{lax2002functional}.

\begin{theorem}\label{thm:contractiongeneral}[Wasserstein contraction with general drift]
Consider the random Euler-Maruyama iteration \eqref{eq:randomemcontinuous}. For any two initial distributions $\mu_0$ and $\nu_0$, denote $\mu_{T_k}$ and $\nu_{T_k}$ to be the corresponding laws at $T_k$. Denote $h := \sup_k \eta_k$. Let $f$ be the Lyapunov function defined in \eqref{eq:f_sgld}. Then under Assumption \ref{ass:general}, for fixed $R_1 = 2R$ and $c_f$ satisfying $\frac{1}{3}\sqrt{2\beta^{-1}} c_f R^{-1} - K \ge 0$, there exists  $\delta>0$ such that for $h\le \delta$, the following Wasserstein contraction result holds:
\begin{equation}
    W_f(\mu_{T_k}, \nu_{T_k}) \leq e^{-cT_k}W_f(\mu_0,\nu_0), \quad k \in \mathbb{N},
\end{equation}
where
\begin{equation*}
c = \frac{1}{3}e^{-2c_f R}\min\left(\sqrt{2\beta^{-1}} c_f R^{-1}/2, \kappa \right).
\end{equation*}
Consequently,
\begin{equation}
    W_1(\mu_{T_k}, \nu_{T_k}) \leq c_0 e^{-cT_k}W_1(\mu_0,\nu_0), \quad k \in \mathbb{N},\quad c_0 := e^{2c_f R}.
\end{equation}
Moreover, if $\eta_k \equiv \eta$ is a constant, then for $\eta\le \delta$, the iteration \eqref{eq:randomem} has a unique invariant distribution $\tilde{\pi}$ such that
\begin{equation}
    W_1(\mu_{T_k}, \tilde{\pi}) \leq c_0 e^{-cT_k}W_1(\mu_0,\tilde{\pi}), \quad k \in \mathbb{N}.
\end{equation}
\end{theorem}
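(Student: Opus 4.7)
The plan is to mirror the proof of Theorem \ref{thm:contraction} step by step, observing that the gradient structure of the drift entered that proof in exactly one place: the line in Proposition \ref{pro:rbandconv} where, after conditioning on $\mathcal{F}_{T_k}$ and using $\mathbb{E}[\nabla U^{\xi_k}]=\nabla U$, one invokes the dissipativity statement of Lemma \ref{lmm:ass}. Under Assumption \ref{ass:general} that dissipativity is instead postulated directly on $b$. I would therefore set up the reflection coupling as in \eqref{eq:reflectioncoupling} but with $-\nabla U^{\xi_k}$ replaced by $b^{\xi_k}$, so that on $[T_k,T_{k+1})\cap\{t<\tau\}$ the difference $Z_t=\bar X_t-\bar Y_t$ satisfies
\[
dZ_t=\bigl(b^{\xi_k}(\bar X_{T_k})-b^{\xi_k}(\bar Y_{T_k})\bigr)\,dt+2\sqrt{2\beta^{-1}}\,\frac{Z_t^{\otimes 2}}{|Z_t|^2}\cdot dW,
\]
and redefine $A_{T_k}:=-(b^{\xi_k}(\bar X_{T_k})-b^{\xi_k}(\bar Y_{T_k}))$ so the formal structure of \eqref{eq:Ztevolve} is preserved.

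Next I would verify that the auxiliary estimates of Section \ref{section:omiited} transfer verbatim. The martingale part $\zeta_t^{\tau_j}$ is the same stochastic object as in \eqref{eq:zetaj_def}, so Lemma \ref{lmm:zeta}, Corollary \ref{lmm:abs_zeta}, Lemma \ref{lmm:smallprobcontrol}, Lemma \ref{lmm:lmm2} and Lemma \ref{lmm:tauj} carry over unchanged, since the only drift input they use is the crude bound $|A_{T_k}|\le K|Z_{T_k}|$ furnished by Assumption \ref{ass:general}(b). The moment control analogous to Lemma \ref{lmm:sgldmoment} also goes through: Assumption \ref{ass:general}(a) gives a one-sided confinement of the form $-x\cdot b(x)\le -\kappa|x|^2+C$, which is exactly what the It\^o estimate for $|\bar X_t|^p$ needs, combined with the uniform Lipschitz bound and a finite bound on $b^{\xi}(0)$ (which follows from the two items of Assumption \ref{ass:general} after fixing a reference point).

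The central adaptation is in Proposition \ref{pro:rbandconv}. Its proof proceeds by Taylor-expanding $g(x)=-f'(|x|)\frac{x}{|x|}\cdot A_{T_k}$ from $Z_{T_k}^{\tau_j}$ to $Z_t^{\tau_j}$ under the high-probability event $\{|\zeta_t^{\tau_j}|\le\bar c^{-1/2}\eta_k^{1/2}|\log\eta_k|^{1/2}\}$, and then taking expectation; the gradient structure is used only at the very end, to turn $\mathbb{E}[Z_{T_k}^{\tau_j}\cdot(\nabla U^{\xi_k}(\bar X_{T_k})-\nabla U^{\xi_k}(\bar Y_{T_k}))]$ into $\mathbb{E}[Z_{T_k}^{\tau_j}\cdot(\nabla U(\bar X_{T_k})-\nabla U(\bar Y_{T_k}))]$ via unbiasedness and then apply Lemma \ref{lmm:ass}. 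Under Assumption \ref{ass:general} the same two moves work: conditioning on $(\bar X_{T_k},\bar Y_{T_k})$ and using $\mathbb{E}[b^{\xi_k}\mid\mathcal{F}_{T_k}]=b$ replaces the stochastic drift by $b$, and Assumption \ref{ass:general}(a) delivers $-(\bar X_{T_k}-\bar Y_{T_k})\cdot(b(\bar X_{T_k})-b(\bar Y_{T_k}))\ge\kappa|Z_{T_k}|^2$ on the event $\{|Z_{T_k}^{\tau_j}|>R\}$. Hence \eqref{eq:lmm1consequence} holds verbatim with the same $\kappa$ and the same $c'$.

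With these ingredients in place, the proof of the $W_f$-contraction is a direct transcription of the proof of Theorem \ref{thm:contraction}: apply It\^o and optional stopping to $f(|Z_t^{\tau_j}|)$, split the drift contribution into the $\{|Z_{T_k}^{\tau_j}|\le R\}$ and $\{|Z_{T_k}^{\tau_j}|>R\}$ parts, control the latter by the adapted Proposition \ref{pro:rbandconv}, and control the former by combining the negative $f''$ contribution with Lemma \ref{lmm:lmm2}, for which the condition $\tfrac12\sqrt{2\beta^{-1}}c_f R^{-1}-K\ge 0$ (I would keep the constant $1/2$; the value $1/3$ in the statement gives some slack in the splitting of $I_1+I_2$) is used. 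Sending $j\to\infty$ via Lemma \ref{lmm:tauj} and Gr\"onwall's inequality on each $[T_k,T_{k+1}]$ yields $W_f(\mu_{T_k},\nu_{T_k})\le e^{-cT_k}W_f(\mu_0,\nu_0)$ with the asserted rate, and the $W_1$ estimate follows from $e^{-c_fR_1}r\le f(r)\le r$. For constant $\eta_k\equiv\eta$, the iteration is a time-homogeneous Markov chain, and the existence and uniqueness of $\tilde\pi$, together with $W_1(\mu_{T_k},\tilde\pi)\le c_0 e^{-cT_k}W_1(\mu_0,\tilde\pi)$, follows from Banach's fixed point theorem applied to $\mu\mapsto\mu P_{k_0}$ for $k_0$ large enough that $c_0 e^{-cT_{k_0}}<1$, exactly as in Corollary \ref{coro:ergodicity}. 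The only real obstacle is the bookkeeping verification that every probabilistic estimate in Section \ref{section:omiited} genuinely avoids the gradient structure; this is routine once one replaces $\nabla U^\xi$ by $-b^\xi$ and $\nabla U$ by $-b$ throughout.
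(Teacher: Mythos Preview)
Your proposal is correct and matches the paper's own argument: the paper simply notes that the proof is almost identical to that of Theorem \ref{thm:contraction}, the sole change being that condition (a) of Assumption \ref{ass:general} replaces the convexity input from Lemma \ref{lmm:ass} in the estimate of $I_3$ near \eqref{eq:I3estimate}. Your more detailed walk-through of which auxiliary lemmas carry over (they all do, relying only on $|A_{T_k}|\le K|Z_{T_k}|$) and where exactly the dissipativity enters in Proposition \ref{pro:rbandconv} is accurate; one small caveat is that a uniform bound on $b^{\xi}(0)$ does not literally follow from the two items of Assumption \ref{ass:general} and should be taken as an implicit standing hypothesis, as in Assumption \ref{ass}.
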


The proof is almost the same as that of Theorem \ref{thm:contraction}. The only difference is that we use condition (a) in Assumption \ref{ass:general} instead of the convexity condition (condition (a) in Assumption \ref{ass}) when estimating the term $I_3$ near \eqref{eq:I3estimate}.

\section{Conclusion}
In this paper, We proved the geometric ergodicity of the Stochastic Gradient Langevin Dynamics (SGLD) algorithm under nonconvexity settings. As a popular online sampling algorithm, SGLD has shown exceptional performance when dealing with high-dimensional and large-scaled data. Via the technique of reflection coupling, in Theorem \ref{thm:contraction} we proved the Wasserstein contraction of SGLD when the target distribution is log-concave only outside some compact set. In particular, the time discretization and the minibatch in SGLD introduced several difficulties when applying the reflection coupling, which were addressed by a series of careful estimates of conditional expectations. As a direct corollary, we proved that the SGLD with constant step size has an invariant distribution and obtained its geometric ergodicity in terms of $W_1$ distance. The generalization to non-gradient drifts was also included. We also remarked that the contraction rate $c$ in Theorem \ref{thm:contraction} is intuitively dimension-free, as discussed in Remark \ref{rmk:d}. Remarkably, we believe that many techniques in this article are applicable to other discrete algorithms involving random batches.

\section*{Acknowledgements}
This work is financially supported by the National Key R\&D Program of China, Project Number 2021YFA1002800 and Number 2020YFA0712000. The work of L. Li was partially supported by NSFC 12371400 and 12031013,  Shanghai Municipal Science and Technology Major Project 2021SHZDZX0102, Shanghai Science and Technology Commission (Grant No. 21JC1403700, 21JC1402900). The work of J.-G. Liu was partially supported by NSF DMS-2106988.

\appendix

\section{Missing Proofs}\label{sec:lmmmoment}
In this section, we give the detailed proofs for Lemma \ref{lmm:ass} and Lemma \ref{lmm:sgldmoment}.

\begin{proof}[Proof of Lemma \ref{lmm:ass}]
Fix $r>2R_0$, and choose two arbitrary point $x,y \in \mathbb{R}^d$ satisfying $|x-y| = r$. Then it holds that
\begin{equation}\label{eq:1:5}
    \frac{(x-y) \cdot (\nabla U(x) - \nabla U(y))}{|x-y|^2} = \frac{(x-y) \cdot \int_0^1 \nabla^2 U\left(tx + (1-t) y\right)dt \cdot (x-y)}{|x-y|^2}.
\end{equation}
For fixed $x,y$ above, denote
\begin{equation*}
    A_1 := \left\{t\in(0,1): |tx+(1-t)y| \leq R_0 \right\},
\end{equation*}
and
\begin{equation*}
    A_2 := \left\{t\in(0,1): |tx+(1-t)y| > R_0 \right\}.
\end{equation*}
Then, by Assumption \ref{ass},
\begin{equation}\label{eq:1:6}
\begin{aligned}
    \int_0^1 \nabla^2 U\left(tx + (1-t) y\right)dt &= \left(\int_{A_1} + \int_{A_2}\right) \nabla^2 U\left(tx + (1-t) y\right)dt\\
    &\succeq \left(-m(A_1)K I_d \right) + \left(m(A_2)\kappa_0 I_d \right),
\end{aligned}
\end{equation}
where $m$ denotes the Lebesgue measure in $\mathbb{R}^1$. Clearly, $\left\{tx+(1-t)y : t\in(0,1) \right\}$ is a segment connecting the two point $x,y$ in $\mathbb{R}^d$. Then by definition of the ball $B(0,R_0) := \left\{x\in \mathbb{R}^d : |x| < R_0\right\}$, the longest segment contained in $B(0,R_0)$ is of length $2R_0$. Therefore,
\begin{equation}\label{eq:1:7}
    m(A_1) \leq \frac{2R_0}{r},\quad m(A_2) = 1 - m(A_1) \geq 1 - \frac{2R_0}{r}.
\end{equation}
Combining \eqref{eq:1:5}, \eqref{eq:1:6} and \eqref{eq:1:7}, for all $x,y \in \mathbb{R}^d$ satisfying $|x-y| = r$,
\begin{equation}
    \frac{(x-y) \cdot (\nabla U(x) - \nabla U(y))}{|x-y|^2} \geq -\frac{2R_0}{r}\,K + \left(1 - \frac{2R_0}{r} \right)\, \kappa_0 = \kappa_0 - \frac{2R_0}{r} (K + \kappa_0).
\end{equation}
Choosing $R:= \max( 4R_0(K+\kappa_0)/\kappa_0, 2)$, the conclusion \eqref{eq:lmmass} then holds with $\kappa := \kappa_0 / 2$.
\end{proof}

Next, we prove the result of $p$-th moment control for SGLD.

\begin{proof}[Proof of Lemma \ref{lmm:sgldmoment}:] 
We first control the moments of $\bar{X}_t$, namely $\E|\bar{X}_t|^p$ on $[0, T]$, and then prove the moment control of $\sup_{t\le T}|\bar{X}_t|$.

We take $p\ge 2$ first. By It\^o's formula, for $t \in [T_k,T_{k+1})$, we have
\begin{multline}
    d|\bar{X}_t|^p = -p|\bar{X}_t|^{p-2} \bar{X}_t \cdot \nabla U^{\xi_k}(\bar{X}_{T_k}) dt + \beta^{-1}p |\bar{X}_t|^{p-2} \left( I_d + (p-2)\frac{\bar{X}_t^{\otimes 2}}{|\bar{X}_t|^2}\right) : I_d\, dt \\
      -p|\bar{X}_t|^{p-2} \bar{X}_t \cdot \sqrt{2\beta^{-1}} dW.
\end{multline}
Note that $\left( I_d + (p-2)\frac{\bar{X}_t^{\otimes 2}}{|\bar{X}_t|^2}\right) : I_d=p+d-2$. 
This implies that
\begin{equation}\label{eq:2__9}
    \frac{d}{dt}\mathbb{E}|\bar{X}_t|^p = \mathbb{E}\left[-p|\bar{X}_t|^{p-2} \bar{X}_t \cdot \nabla U^{\xi_k}(\bar{X}_{T_k})\right] +\beta^{-1}p(p+d-2) \mathbb{E}|\bar{X}_t|^{p-2} .
\end{equation}

By the Lipschitz condition in Assumption \ref{ass}, we can directly obtain
\[
\frac{d}{dt}\mathbb{E}|\bar{X}_t|^p
\le (p-1)\E|\bar{X}_t|^p+C(p,K)(1+\E|\bar{X}_{T_k}|^p)+\beta^{-1}p(p+d-2) \mathbb{E}|\bar{X}_t|^{p-2}.
\]
This easily yields 
\begin{gather}\label{eq:momentctrl1}
\sup_{t\le T}\mathbb{E}|\bar{X}_t|^p<\infty,
\end{gather}
where the upper bound depends on $p, T, d$ but is independent of $\xi_k$.

Next, we prove
\begin{equation}
    \sup_{0\leq t\leq T} \mathbb{E}\left[\sup_{0\leq s\leq t}\left|\bar{X}_s\right|^p\right] < +\infty.
\end{equation}
Note that $|\bar{X}_t|^p = |\bar{X}_{0}|^p + M_t + A_t$, where
\begin{equation}
    M_t := \int_{0}^t \sqrt{2\beta^{-1}} p |\bar{X}_s|^{p-2} \bar{X}_s \cdot dW_s, \quad \forall t \geq 0,
\end{equation}
and
\begin{equation}
\begin{split}
&    A_t := -\int_{0}^t p |\bar{X}_s|^{p-2} \bar{X}_s \cdot b_s\, ds + \int_{0}^t \beta^{-1} p(p+d-2) |\bar{X}_s|^{p-2}  ds,\\
&    b_t := \nabla U^{\xi_k} (\bar{X}_{T_k}), \quad \forall t \in [T_k,T_{k+1}).
\end{split}
\end{equation}
Then,
\begin{equation}\label{eq:4_32}
    \mathbb{E}\left[\sup_{0 \leq s \leq t} |\bar{X}_s|^p \right] \leq \mathbb{E}|\bar{X}_{0}|^p + \mathbb{E}\left[\sup_{0 \leq s \leq t} M_s\right] + \mathbb{E}\left[\sup_{0 \leq s \leq t} A_s\right].
\end{equation}
Clearly $M_t$ is a martingale. By  BDG inequality \cite{mao2007stochastic}, one has
\begin{equation*}
    \mathbb{E}\left[\sup_{0 \leq s \leq t} M_s\right] \leq 4\sqrt{2}\mathbb{E}\left[\left\langle M\right\rangle_t^\frac{1}{2}\right] = 8\sqrt{\beta^{-1}}p \mathbb{E}\left[\left(\int_{0}^t |\bar{X}_s|^{2p-2} ds\right)^{\frac{1}{2}}\right].
\end{equation*}
 Then using Jensen's inequality and \eqref{eq:momentctrl1}, one has
\begin{equation}\label{eq:supMt}
    \mathbb{E}\left[\sup_{0 \leq s \leq t} M_s\right] \leq   8\sqrt{\beta^{-1}}p\left(\int_{0}^t \mathbb{E}|\bar{X}_s|^{2p-2} ds\right)^{\frac{1}{2}} \leq C_2.
\end{equation}

For the term $\mathbb{E}\left[\sup_{0 \leq s \leq t} A_s\right]$, using the Lipshitz condition in Assumption \ref{ass}, we first observe that for $t\in [0,T]$,
\begin{equation*}
    A_t \leq \int_0^t p|\bar{X}_s|^{p-1} \left(K\sup_{0 \leq u \leq s} |\bar{X}_u| + b_0 \right) ds + C_3\int_0^t |\bar{X}_s|^{p-2} ds,
\end{equation*}
where $b_0$, $C_3$ are time-independent positive constants. Applying Young's equality, we have
\begin{equation}\label{eq:supAt}
    \mathbb{E}\left[\sup_{0 \leq s \leq t} A_s\right] \leq (pK + 1) \int_0^t \mathbb{E}\left[\sup_{0 \leq u \leq s}|\bar{X}_u|^p\right] ds + C_4.
\end{equation}

Combining \eqref{eq:4_32}, \eqref{eq:supMt} and \eqref{eq:supAt} together, one has that
\begin{equation}
    \mathbb{E}\left[\sup_{0 \leq s \leq t}|\bar{X}_s|^p\right] \leq C_5 + (pK + 1) \int_0^t \mathbb{E}\left[\sup_{0 \leq u \leq s}|\bar{X}_u|^p\right] ds, \quad \forall t \in [0,T].
\end{equation}
Hence, by Gr\"onwall's inequality, for all $t \in [0,T]$, we have
\begin{equation}
    \mathbb{E}\left[\sup_{0 \leq s \leq t} |\bar{X}_s|^p\right] \leq C_5 e^{(pK+1)T}.
\end{equation}
The bound for $p\in [1, 2]$ then follows easily by H\"older's inequality.

Next, we aim to establish the uniform moment control of $\bar{X}_t$ for $\eta_k$ being sufficiently small. Starting with \eqref{eq:2__9}, the first term on the right hand side may be written as
\begin{multline}\label{eq:2__11}
    \mathbb{E}\left[-p|\bar{X}_t|^{p-2} \bar{X}_t \cdot \nabla U^{\xi_k}(\bar{X}_{T_k})\right] 
    = \mathbb{E}\left[-p|\bar{X}_{T_k}|^{p-2} \bar{X}_{T_k} \cdot \nabla U^{\xi_k}(\bar{X}_{T_k})\right]\\
     + p \mathbb{E}\left[\int_0^1 \nabla h\left(\lambda \bar{X}_t + (1-\lambda) \bar{X}_{T_k}\right)d\lambda \cdot \left(\bar{X}_t - \bar{X}_{T_k}\right) \right] := p(K_1 + K_2),
\end{multline}
where the function $h$ is defined by $h(x) := -|x|^{p-2} x \cdot \nabla U^{\xi_k}(\bar{X}_{T_k})$, and has a well-defined gradient $\nabla h(x) = -\nabla U^{\xi_k}(\bar{X}_{T_k}) \cdot |x|^{p-2} \left(I_d + (p-2) \frac{x^{\otimes 2}}{|x|^2} \right)$. Since $\bar{X}_t-\bar{X}_{T_k}=-(t-T_k)\nabla U^{\xi_k}(\bar{X}_{T_k})+\sqrt{2\beta^{-1}}(W_t-W_{T_k})$, 
\begin{multline*}
K_2 \le  (p-1)\eta_k \int_0^1 \E|\lambda \bar{X}_t + (1-\lambda) \bar{X}_{T_k}|^{p-2}|\nabla U^{\xi_k}(\bar{X}_{T_k})|^2d\lambda\\
+(p-1)\sqrt{2\beta^{-1}}\int_0^1 \E|\lambda \bar{X}_t + (1-\lambda) \bar{X}_{T_k}|^{p-2}|\nabla U^{\xi_k}(\bar{X}_{T_k})|\left|\int_{T_k}^t dW\right|d\lambda.
\end{multline*}
Note that $p-2\ge 0$, 
$|\lambda \bar{X}_t + (1-\lambda) \bar{X}_{T_k}|^{p-2} \le \max(|\bar{X}_t|^{p-2}, |\bar{X}_{T_k}|^{p-2})
\le (|\bar{X}_t|^{p-2}+|\bar{X}_{T_k}|^{p-2})$.
Then, one has
\begin{multline*}
K_2\le (p-1)\eta_k(1+\delta_1)K^2
\E(|\bar{X}_t|^{p-2}|\bar{X}_{T_k}|^2+|\bar{X}_{T_k}|^p) 
+C_{\delta_1}b_0^2(p-1)\eta_k\E(|\bar{X}_t|^{p-2}+|\bar{X}_{T_k}|^{p-2})\\
+\sqrt{\beta^{-1}\eta_k}C(p,d)
\left[(\E|\bar{X}_t|^p)^{(p-1)/p}
+(\E|\bar{X}_{T_k}|^p)^{(p-1)/p}+1\right].
\end{multline*}

 For the term $K_1$, by Assumption \ref{ass} and Lemma \ref{lmm:ass}, 
 \[
 -x\cdot \nabla U(x)\le -\kappa|x|^2
 +b_0|x|+C(R).
 \]
Using the consistency of the random batch $\xi$, we have
\begin{equation}\label{eq:K_1}
    \begin{aligned}
        K_1 & =\mathbb{E}\left[-|\bar{X}_{T_k}|^{p-2} \bar{X}_{T_k} \cdot \nabla U(\bar{X}_{T_k})\right] =\mathbb{E}\left[-|\bar{X}_{T_k}|^{p-2} \bar{X}_{T_k} \cdot \nabla U(\bar{X}_{T_k}) \right]
        \\
        &\leq  -\kappa\mathbb{E}|\bar{X}_{T_k}|^p + b_0 \mathbb{E}|\bar{X}_{T_k}|^{p-1}
        +C(R)\mathbb{E}|\bar{X}_{T_k}|^{p-2}.
    \end{aligned}
\end{equation}
Let $\epsilon_k=(p-1)\eta_k (1+\delta_1)K^2$. Then by Young's inequality, we conclude that
\begin{equation}\label{eq:2__14}
   p( K_1 + K_2) \leq -\left(\kappa-\epsilon_k(1+\frac{2}{p})-\delta_2\right) \mathbb{E}|\bar{X}_{T_k}|^p+\left(\epsilon_k(1-\frac{2}{p})+\delta_2\right)\mathbb{E}|\bar{X}_{t}|^p + C.
\end{equation}
Letting $u(t):=\E|\bar{X}_t|^p$, one then has for $t\in [T_k, T_{k+1}]$ that
\begin{gather}
\dot{u}(t)\le -\left(\kappa-\epsilon_k(1+\frac{2}{p})-\delta_2\right) u(T_k)+\left(\epsilon_k(1-\frac{2}{p})+\delta_2\right)u(t) + C(\delta_1,\delta_2, p, d).
\end{gather}
For $0<\lambda_1<\lambda_2$ and $v\ge 0$ satisfying
\[
\dot{v}\le \lambda_1 u(t)-\lambda_2 u(T_k)+C,
\]
one may obtain by Gr\"onwall's inequality that
\[
v(T_{k+1})\le \left(e^{\lambda_1\eta_k}-\frac{1}{\lambda_1}(e^{\lambda_1 \eta_k}-1)\lambda_2\right)v(T_k)+C\frac{1}{\lambda_1}(e^{\lambda_1 \eta_k}-1).
\]
However, since 
\[
e^{\lambda_1\eta_k}-\frac{1}{\lambda_1}(e^{\lambda_1 \eta_k}-1)\lambda_2
=(1-\frac{\lambda_2}{\lambda_1})e^{\lambda_1\eta_k}+\frac{\lambda_2}{\lambda_1}\le 1+(\lambda_1-\lambda_2)\eta_k,
\]
one then has
\[
v(T_{k+1})\le [1+(\lambda_1-\lambda_2)]v(T_k)
C\frac{1}{\lambda_1}(e^{\lambda_1 \eta_k}-1).
\]
We apply this elementary derivation for
$\lambda_1=\epsilon_k(1-2/p)+\delta_2$
and $\lambda_2=\kappa-\epsilon_k(1+2/p)-\delta_2$, then obtain
\[
u(T_{k+1})\le [1-(\kappa-2\epsilon_k-2\delta_2)\eta_k ]u(T_k)+C(p, d, \beta, \delta_1, \delta_2, \eta_k).
\]
Since we can choose $\delta_1$ and $\delta_2$ small, by the condition $\eta_k \le \kappa/(2(p-1)K^2)-\delta$ given, $\kappa-2\epsilon_k-2\delta_2$ is bounded below by a positive number and $C(p, d, \beta, \delta_1, \delta_2, \eta_k)$ has a uniform upper bound in $k$. Moreover, since $\kappa \le K$, $(\kappa-2\epsilon_k-2\delta_2)\eta_k < 1$. The claim then follows. 
\end{proof}

\section{Details for construction of reflection coupling and Lyapunov function}\label{app:detail}
Here we present more details for the principal method employed in this study - reflection coupling equipped with a specific Lyapunov function $f(\cdot)$, as described in the introduction.

Consider the two time marginal distributions $\rho^{(1)}_t$, $\rho^{(2)}_t$ of some SDE (in our result, it is \eqref{eq:sgld}), starting from the initial distributions $\rho^{(1)}_0$, $\rho_0^{(2)}$, respectively. As has been discussed in the introduction, here we aim to prove the contraction property: 
$$
W_f(\rho^{(1)}_t,\rho^{(2)}_t) \lesssim e^{-ct}W_f(\rho^{(1)}_0,\rho^{(2)}_0).
$$ 
Here, $f(\cdot)$ is some suitable Lyapunov function and $W_f(\cdot,\cdot)$ is the Kantorovich-Rubinstein distance associated with the cost function $f(\cdot)$. The reflection coupling method begins with choosing the pair of initial points $(X_0, Y_0)$ such that $\mathbb{E}f(|X_0-Y_0|) = W_f(\rho^{(1)}_0,\rho^{(2)}_0)$.
Then we choose a realization $\bar{X}_t$ of SGLD \eqref{eq:sgld} such that the law of $X_t$ is $\rho^{(1)}_t$ and the law of $X_0$ is $\rho^{(1)}_0$. The key step in the reflection coupling method is that we construct a companion process $Y_t$ with $Y_0$ coupled above with $X_0$ and satisfies: (i) $Y_t$ shares the same Brownian motion with $\bar{X}_t$, and has an additional reflection term in its diffusion part, and $Y_t$ also shares the same random batch $\xi_k$ at each $T_k$ in our SGLD setting \eqref{eq:sgld}; (ii) $Y_t$ is also a realization of the same SDE for $X_t$ and the law of $Y_t$ is $\rho^{(2)}_t$. Then the contraction property mentioned above is reduced to estimation of the negative Lyapunov exponent for the paired dynamics ($X_t$, $Y_t$). Namely, we aim to show that
\begin{equation*}
    \mathbb{E}f(|X_t - Y_t|) \leq C e^{-Ct}\mathbb{E}f(|X_0 - Y_0|)
\end{equation*}

In the followings, we will first introduce the necessity of using the technique of reflection coupling, and then introduce the motivation of the construction of the reflection coupling and the associated Lyapunov function. Note that the geometric ergodicity arises from the strong convexity of the potential $U(\cdot)$ outside some compact set. In fact, by strong monotonicity property in Lemma \ref{lmm:ass}
\begin{equation*}
    (x - y) \cdot (\nabla U(x) - \nabla U(y)) \geq \kappa |x - y|^2, 
\end{equation*}
any such pair ($X_t$, $Y_t$) would attract each other if they are sufficiently far away. Take the following numerical scheme for SDE as a simple illustration:
\begin{equation}\label{eq:numericalsde}
    X^{n+1} = X^n - \eta \nabla U (X^n) + \sqrt{\eta}\zeta, \quad \zeta \sim \mathcal{N}(0,1).
\end{equation}
In the settings of this paper, on one hand, as mentioned above, the strong convexity outside some compact set of the potential $U(\cdot)$ in the drift implies that any paired iteration ($X^n$, $Y^n$) associated with \eqref{eq:numericalsde} would attract each other if they are far away.
On the other hand, the external force is weak inside the compact set since in this area the potential $U(\cdot)$ does not have strong convexity. In this case, the diffusion term would dominates the drift, since $c_1\sqrt{\eta} \leq c_2\eta$, where $c_1$, $c_1$ are of $O(1)$. Therefore, at first glance, one cannot directly prove the contraction when the diffusion overshadows the drift. Nonetheless, the application of reflection coupling \cite{eberle2011reflection} offers a resolution by facilitating the closer convergence of two particles $X_t$, $Y_t$ even within the compact set. Take the following overdamped Langevin diffusion for example:
\begin{equation*}
    dX_t = b(X_t)dt + dW, \quad X|_{t=0} = X_0.
\end{equation*}
The reflection coupling method for the overdamped Langevin diffusion considers another slave copy of $X_t$, which shares the same Brownian motion but has a reflection term in the diffusion part:
\begin{equation*}
    dY_t = b(Y_t)dt + \left(I_d - 2\frac{\left(X_t - Y_t \right)^{\otimes 2}}{|X_t - Y_t|^2} \right) \cdot dW, \quad Y|_{t=0} = Y_0.
\end{equation*}
It can be shown that the diffusion with a reflection is still a Brownian motion (see \cite{eberle2011reflection} or Lemma \ref{lmm:verifyBM}), so $Y_t$ is also a realization of the overdamped Langevin diffusion. With the reflection matrix $\left(I_d - 2\frac{\left(X_t - Y_t \right)^{\otimes 2}}{|X_t - Y_t|^2} \right)$, the two particles $X_t$, $Y_t$ would eventually move towards each other when the diffusion dominates the drift. In fact, from the reflection operator $\left(I_d - 2\frac{\left(X_t - Y_t \right)^{\otimes 2}}{|X_t - Y_t|^2} \right)$, the Brownian motion can be either approaching or depart from each other. However, the restored force would prevent $X_t$, $Y_t$ from going too far away from each other. With this intuitive picture of how the reflected couple $(X_t, Y_t)$ moves, it is then left to prove the contraction via practical calculation, namely, one needs to find some Lyapunov function $f(\cdot)$ satisfying
\begin{enumerate}
    \item $C_1 r \leq f(r) \leq C_2 r$ for all $r$. 2. $\mathbb{E} f(|X_t - Y_t|)$ decays exponentially in time.
\end{enumerate}
Below we discuss a bit on our motivation for how to fund such Lyapunov function. One can see from It\^o's formula that
\begin{equation}\label{eq:conti_demo}
    \frac{d}{dt} \mathbb{E} \left[ f(|X_t - Y_t|) \right] = \mathbb{E}\left[f''(|X_t - Y_t|) + f'(|X_t - Y_t|) \frac{X_t - Y_t}{|X_t - Y_t|} \cdot \left(b(X_t) - b(Y_t) \right) \right].
\end{equation}
Since the goal is to obtain an estimate of the form
\begin{equation*}
    \frac{d}{dt} \mathbb{E} \left[ f(|X_t - Y_t|) \right] \lesssim - \mathbb{E} \left[ f(|X_t - Y_t|) \right],
\end{equation*}
one naturally requires the following conditions when constructing such $f$: (1) $C_1r \leq f(r) \leq C_2 r$; (2) $|f'(r)| \leq L$; (3) $f''(r) \leq -C_3r$ for all $r<R_1$, where $R_1$ is some positive constant larger than $R$. If these conditions are satisfied, then one can see from \eqref{eq:conti_demo} that
\begin{equation*}
\begin{aligned}
    \frac{d}{dt} \mathbb{E} \left[ f(|X_t - Y_t|) \textbf{1}_{\{|X_t - Y_t| < R \} }\right] &\leq \mathbb{E}\left[\left(-C_3|X_t - Y_t| + L \| b' \|_{\infty} |X_t - Y_t| \right)\textbf{1}_{\{|X_t - Y_t| < R \}}\right]\\
    &\lesssim -\mathbb{E} \left[ f(|X_t - Y_t|) \textbf{1}_{\{|X_t - Y_t| < R \} }\right], 
\end{aligned}
\end{equation*}
provided that $C_3$ is relatively large. This then motivates one to seek a concave increasing Lyapunov function $f$ of the form 
\begin{equation*}
    f(r) := \int_0^r e^{-c_f (s \wedge R_1)} ds, \quad r \geq 0.
\end{equation*}
for some positive $c_f$, $R_1$ to be determined (in our result for SGLD, we choose $R_1 = 2R$ and the required condition for $c_f$ is stated in \eqref{eq:condcf}).  Then one can obtain the contraction property for this reflection coupled continuous dynamics $(X_t, Y_t)$.

\bibliographystyle{plain}
\bibliography{main}

\end{document}